\title{Endoscopic Transfer for unitary Lie Algebras}
\author{Jingwei Xiao}
\address{MIT, Massachusetts, USA, 02139.} 
\email{jwxiao@mit.edu}
\newtheorem{theorem}{Theorem}[section]
\newtheorem{definition}[theorem]{Definition}
\newtheorem{lemma}[theorem]{Lemma}
\newtheorem{proposition}[theorem]{Proposition}
\newtheorem{conjecture}[theorem]{Conjecture}
\newtheorem*{remark}{Remark}
\newtheorem*{acknowledgement}{Acknowledgement}
\begin{document}
\bibliographystyle{plain}
\maketitle

\begin{abstract}
We give another proof of the existence of the endoscopic transfer for unitary Lie algebras and its compatibility with Fourier transforms. By the work of Kazhdan and Vashavsky in \cite{Kazhdan}, this implies the corresponding endoscopic fundamental lemma (theorem of Laumon--Ng\^o).  We study the compatibility between Fourier transforms and transfers and we prove that the compatibility in the Jacquet-Rallis setting implies the compatibility in the endoscopic setting for unitary groups.
\end{abstract}

\tableofcontents

\section{Introduction}

In \cite{Wa1} and \cite{Wa2}, Waldspurger states a conjecture that the Langlands-Shelstad endoscopic transfer on Lie algebras always exists and is preserved by Fourier transforms:

\begin{conjecture} \label{MC}
Let $F$ be a non-archimedean local field of characteristic zero, $G$ a reductive group over $F$ and $H$ its endoscopic group. Let $\mathfrak{g}$ and $\mathfrak{h}$ be the Lie algebras of $G$ and $H$.

$\mathbf{1}$. For any $f \in C_c^\infty(\mathfrak{g})$, there exists a transfer $f^H \in C_c^\infty(\mathfrak{h})$.

$\mathbf{2}$. Take compatible Fourier transforms $\mathcal{F}_G$ and $\mathcal{F}_H$ on  $\mathfrak{g}$ and $\mathfrak{h}$. There exists a constant $c$ such that whenever $f^H$ is a transfer of $f $, then
$\mathcal{F}_H(f^H)$ is a transfer of $c\mathcal{F}_G(f)$. 

\end{conjecture}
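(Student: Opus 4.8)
The plan is to deduce Conjecture~\ref{MC} for an arbitrary reductive pair $(G,H)$ from the unitary case, which is then handled directly by comparison with the Jacquet--Rallis situation. I would first normalise the problem by the standard endoscopic manipulations, all available over $F$ for arbitrary $G$: pass to a $z$-extension so that $G_{\mathrm{der}}$ is simply connected; split off the central torus and any general-linear factors, where both assertions are elementary (stable orbital integrals coincide with ordinary ones and $\mathcal F$ acts on them by a single Weil index); and reduce to the case of an elliptic endoscopic datum --- a non-elliptic $H$ is endoscopic for a proper Levi $M$ of $G$, and both the transfer and its interaction with $\mathcal F$ factor through the constant term along a parabolic with Levi $M$, giving an induction on $\dim G$. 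Descent around semisimple elements (after Langlands--Shelstad and Waldspurger), together with the compatibility of transfer factors and of (partial) Fourier transforms with Harish-Chandra descent to reductive centralisers, then localises $\mathbf 1$ and $\mathbf 2$, reducing to pairs $(G,H)$ with $G$ almost simple. There assertion $\mathbf 1$ follows from the Lie-algebra fundamental lemma by Waldspurger's ``le lemme fondamental implique le transfert'' \cite{Wa1}, \cite{Wa2}, the fundamental lemma being known (Ng\^o in general, Laumon--Ng\^o for unitary groups); for $\mathbf 2$ the pieces that descend to general-linear, or to non-unitary classical and exceptional, data are likewise reducible via \cite{Wa1}, \cite{Wa2} and the known fundamental lemma. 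The one genuinely new building block, which also pins down the constant $c$, is $G=U(V)$ with $H=U(V_1)\times U(V_2)$, $\dim V_1+\dim V_2=\dim V$; this we now address.

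Let $E/F$ be the relevant quadratic extension. The engine is the Jacquet--Rallis correspondence at the Lie-algebra level: it relates orbital integrals on $\mathfrak u(V)(F)$, taken over all Hermitian spaces $V$ of a fixed dimension $n$, to orbital integrals on the general-linear symmetric space over $E$ attached to $\mathfrak{gl}_n$, regular semisimple orbits on the two sides corresponding through equality of invariants (characteristic polynomials) and a Jacquet--Rallis transfer factor $\omega$. I would (i) recall, or reprove, that Jacquet--Rallis transfer exists between the relevant spaces of test functions and is compatible with the Fourier transform of the $\mathfrak{gl}_n$-symmetric space --- which is self-dual, so it corresponds through the matching to $\mathcal F_{U(V)}$ up to an explicit Weil index; and (ii) observe that the general-linear side has only trivial endoscopy but carries a Levi $\mathfrak{gl}_{n_1}\times\mathfrak{gl}_{n_2}$ whose symmetric space is exactly the Jacquet--Rallis mirror of $\mathfrak u(V_1)\times\mathfrak u(V_2)$ and is joined to the $\mathfrak{gl}_n$-symmetric space by parabolic constant terms.

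The decisive step is a transfer-factor identity: on regular semisimple elements the Langlands--Shelstad factor for $(U(V),U(V_1)\times U(V_2))$ should equal the ratio of the two Jacquet--Rallis factors $\omega$ composed with Levi restriction, so that the endoscopic transfer $f\mapsto f^H$ is realised as ``Jacquet--Rallis transfer from $\mathfrak u(V)$ to the $\mathfrak{gl}_n$-symmetric space, restrict orbital integrals to the Levi, Jacquet--Rallis transfer back to $\mathfrak u(V_1)\times\mathfrak u(V_2)$''. Given this, $\mathbf 1$ for $U(V)$ is immediate, and $\mathbf 2$ follows because each Jacquet--Rallis step intertwines $\mathcal F$ up to a Weil index and Levi restriction intertwines partial Fourier transforms, the contributions combining to the single constant $c$ --- which is precisely the implication ``Fourier compatibility in the Jacquet--Rallis setting $\Rightarrow$ Fourier compatibility in the endoscopic setting'' announced in the abstract; propagating the orbital-integral identities from the regular semisimple locus to all of $C_c^\infty$ then uses density of regular semisimple orbital integrals and the Harish-Chandra--Howe expansion near the nilpotent cone. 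The main obstacle is exactly this transfer-factor identity and the precise value of $c$: one must match two Langlands--Shelstad-type cocycles against two Jacquet--Rallis factors with all Weil indices, discriminants and sign characters included. I expect a global argument to be needed here --- embed $E/F$ and $V$ in a global Hermitian datum, use the product formula for Weil indices and the fact that both the endoscopic and the Jacquet--Rallis transfer factors are determined up to one global scalar by their local product relations, and normalise at the unramified places where both the endoscopic and the Jacquet--Rallis fundamental lemmas are already available. Feeding the outcome into Kazhdan--Varshavsky \cite{Kazhdan} then recovers the Laumon--Ng\^o theorem, as the abstract promises.
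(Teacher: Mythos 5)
Your proposal diverges from the paper in scope and, more importantly, in method, and the latter is where a genuine gap lies.

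First, a scope mismatch that affects the logic: the paper does not attempt to prove Conjecture~\ref{MC} for arbitrary $G$; it treats only $G=\mathrm{U}(W)$, $H=\mathrm{U}(W_a)\times\mathrm{U}(W_b)$. Your first paragraph proposes reductions ($z$-extensions, split off $\mathrm{GL}$-factors, descend to Levis, localize to almost simple groups) and then, for the non-unitary pieces, invokes Waldspurger's ``le lemme fondamental implique le transfert'' \emph{together with the known fundamental lemma}. This is precisely what the paper is designed to avoid: the whole point, as stated in the introduction and in the remark after Theorem~\ref{Main Theorem}, is to give a proof that does \emph{not} rely on the fundamental lemma, so that, when fed into Kazhdan--Varshavsky, it yields a genuinely new proof of the Laumon--Ng\^o theorem. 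Any step that inputs the fundamental lemma destroys this. The same circularity appears again in your unitary argument: you propose to pin down the transfer-factor identity by a global normalization ``at the unramified places where both the endoscopic and the Jacquet--Rallis fundamental lemmas are already available.'' That again uses the fundamental lemma and is, moreover, a global argument; the paper's proof is purely local. In the paper the transfer factor is computed directly: nice matching elements (Definition~\ref{nice}) give a preferred conjugacy class, the Langlands--Shelstad factor is written explicitly as $\chi(D(\delta))|D(\delta)|_F$, and the required compatibility with the Jacquet--Rallis factor $\omega$ of (\ref{transfer}) reduces to the elementary identity $\omega(\gamma,v,v^*)=\chi(D(\gamma))\,\omega(\gamma_1,v_1,v_1^*)\,\omega(\gamma_2,v_2,v_2^*)$, verified by diagonalizing over $\overline F$. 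No global input is needed.

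Second, and this is the technical heart that your proposal does not supply: you describe the endoscopic transfer as ``Jacquet--Rallis transfer, restrict to the Levi, Jacquet--Rallis transfer back,'' and you gesture at ``density of regular semisimple orbital integrals and the Harish-Chandra--Howe expansion near the nilpotent cone'' to propagate identities. But the Jacquet--Rallis matching is only defined on \emph{regular semisimple} orbits of $\mathfrak{gl}(V)\times V\times V^*$ and $\mathfrak u(W)\times W$, and the endoscopic $\kappa$-orbit integral on $\mathfrak u(W)$ lives over the \emph{degenerate} locus $w=0$ of the Jacquet--Rallis space. The bridge between these is exactly Theorem~\ref{Nilpotent Identity}: a germ-expansion argument (Lemmas in sections~\ref{A lemma}--\ref{inverse}) which shows that, for matching test functions, the $\kappa$-orbit integral of $f_W$ at $\delta$ equals (up to a transfer factor) a specific ``nilpotent'' orbit integral $\mathrm{Orb}(f,(\gamma,v_\Lambda,v^*_{S_1\setminus\Lambda}))$ indexed by a subset $\Lambda\subseteq S_1$, after which one inverts over $H^1(F,T)$. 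Without this, ``restricting to the Levi'' is not even defined on the objects you need, and the claim that $\mathbf 1$ is ``immediate'' does not hold. You would also miss a second ingredient the paper needs: the transfer back from the Jacquet--Rallis side lands on \emph{both} Hermitian spaces $W_{a,i}\times W_{b,j}$, and one must apply the Jacquet--Langlands transfer for inner forms (Theorems~\ref{JL exist}, \ref{JL commute}, whose proof in \cite{Wa2} requires no fundamental lemma since the FL between inner forms is trivial) and take the signed combination $\widetilde{f^{a,b}_{0,0}}-\widetilde{f^{a,b}_{0,1}}+\widetilde{f^{a,b}_{1,0}}-\widetilde{f^{a,b}_{1,1}}$ to obtain $f^H$. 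Your proposal does not account for the four Hermitian types or for the Jacquet--Langlands step, and the sign bookkeeping ($(-1)^{n-1}$ from Theorem~\ref{JR commute} cancelling against Proposition~\ref{E1}) is essential to part $\mathbf 2$.

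In short: the broad strategy (route the endoscopic transfer through Jacquet--Rallis and parabolic descent) is the right one, but the two places you plan to lean on the fundamental lemma are exactly where the paper's local arguments --- the nilpotent germ-expansion identity and the explicit transfer-factor computation --- must replace it.
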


Waldspurger shows in \cite{Wa2} that conjecture \ref{MC} follows from the fundamental lemma. In this paper, we consider the case when $G=\textup{U}(W)$ and $H=\textup{U}(W_a) \times U(W_b)$. We give another proof of conjecture \ref{MC} in this case without using the fundamental lemma. Here $W, W_a, W_b$ are Hermitian spaces for a quadratic extension $E$ over $F$. 

In \cite{Kazhdan}, Kazhdan and Varshavsky have shown that the fundamental lemma is also implied by conjecture \ref{MC}. So our paper can also be regraded as another proof of the fundamental lemma. 

\bigbreak

We explain the idea of our proof. The key theorem is an identity between nilpotent orbit integrals in the Jacquet-Rallis setting. Let $E/F$ be a quadratic extension of non-archimedean local fields of characteristic zero, $V$ a vector space over $F$ of dimension $n$, and $W_0$, $W_1$ the two isomorphism classes of Hermitian spaces of dimension $n$ for $E/F$.

The Jacquet-Rallis transfer is transfer between $f \in C_c^\infty(\mathfrak{gl}(V)\times V \times V^*)$ and a pair $(f_0, f_1)$ where $f_i \in C_c^\infty(\mathfrak{u}(W_i) \times W_i)$ such that their orbit integral in the \textit{regular semisimple} matching orbits are the same. Here, the orbit is taken for the diagonal action of GL($V$) on $\mathfrak{gl}(V)\times V \times V^*$ and the diagonal action of U$(W_i)$ on $\mathfrak{u}(W_i) \times W_i$. See section \ref{JR review} for more details.

Given matching functions, we prove the following (theorem \ref{Nilpotent Identity}):

\begin{theorem} \label{XY}
 Choose matching orbits \textup{(}$\gamma,v, v^*$\textup{)} and \textup{(}$\delta,w$\textup{)}. We have the following identity between nilpotent orbit integrals
\begin{equation}
\omega(\gamma,v, v^*) \textup{Orb}(f,(\gamma,v_{\Lambda},v^*_{S_1 \backslash \Lambda}))= \sum_x \langle \Lambda,x \rangle \int_{\textup{U}(W_x)/T_{\delta_x}}f_W(g\delta_x g^{-1},0)d\overline{g}.
\end{equation}
\end{theorem}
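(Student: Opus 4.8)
The plan is to deduce this identity from the Jacquet--Rallis transfer in the \emph{regular semisimple} case --- the standing hypothesis on the pair $(f, f_W)$ --- by Harish--Chandra descent at the semisimple element $\gamma$, a controlled degeneration of the $V$-data down to $(\gamma, v_\Lambda, v^*_{S_1 \setminus \Lambda})$, and a careful accounting of transfer factors in the limit.

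First I would set up the descent. Since $\gamma$ is regular semisimple, $V$ breaks into $\gamma$-isotypic blocks $V = \bigoplus_{i} V_i$ indexed by the irreducible factors $p_i$ of the characteristic polynomial; the centralizer of $\gamma$ is the torus $\prod_i E_i^\times$ with $E_i = F[t]/(p_i)$, and $v = \sum_i v_i$, $v^* = \sum_i v_i^*$ with every $v_i, v_i^* \neq 0$ by regularity. A parallel picture holds on the unitary side, where $\delta$ has centralizer $T_\delta$ and the $\delta_x$, $T_{\delta_x}$, $W_x$ of the statement are the associated pieces. Descent at $\gamma$ rewrites $\mathrm{Orb}(f, (\gamma, v_\Lambda, v^*_{S_1 \setminus \Lambda}))$ as an integral over $\prod_i E_i^\times$ (modulo the stabilizer) against a descended function, the datum on the $i$-th factor being $(v_i, 0)$ if $i \in \Lambda$ and $(0, v_i^*)$ if $i \notin \Lambda$ --- a null-cone point for $E_i^\times$ acting on $V_i \times V_i^*$. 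This is exactly why all the moment invariants of $(\gamma, v_\Lambda, v^*_{S_1 \setminus \Lambda})$ vanish, so that the left-hand side really is a nilpotent orbit integral, and it reduces matters to understanding a single block at a time.

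Then I would degenerate the regular semisimple transfer identity. Perturb to regular semisimple data by scaling exactly the components being switched off: for a small parameter $s$, replace $v_i$ by $s v_i$ when $i \notin \Lambda$ and $v_j^*$ by $s v_j^*$ when $j \in \Lambda$, keeping $\gamma$ fixed, so that $(\gamma, v(s), v^*(s)) \to (\gamma, v_\Lambda, v^*_{S_1 \setminus \Lambda})$ as $s \to 0$, and the matching unitary datum $(\delta, w(s))$ degenerates correspondingly; note that the Hermitian space carrying $(\delta, w(s))$ depends, through the quadratic character of $E/F$, on the square classes of the scaling parameters, so that different ways of letting $s \to 0$ land on different limiting classes. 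For $s \neq 0$ the transfer gives $\omega(\gamma, v(s), v^*(s))\, \mathrm{Orb}(f, (\gamma, v(s), v^*(s))) = \mathrm{Orb}(f_W, (\delta, w(s)))$. Passing to the limit, the transfer factor extends continuously; on the general linear side the limit of the orbit integral is controlled by the Shalika germ expansion along the family, and since the limiting orbit has trivial stabilizer its leading germ is precisely the regularized nilpotent orbit integral on the left. On the unitary side $\delta$ remains regular semisimple, so descent at $\delta$ evaluates the limit of $\mathrm{Orb}(f_W, (\delta, w(s)))$ piece by piece: each consistent way the switched-off directions may collapse --- equivalently, each limiting Hermitian class reached --- contributes a closed, convergent orbit integral $\int_{\mathrm{U}(W_x)/T_{\delta_x}} f_W(g \delta_x g^{-1}, 0)\, d\overline{g}$, and summing over these produces the sum $\sum_x$ on the right.

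Finally I would pin down the coefficients $\langle \Lambda, x \rangle$. The Jacquet--Rallis transfer factor $\omega(\gamma, v(s), v^*(s))$ factors over the blocks, the $i$-th local factor a quadratic character of $E/F$ evaluated at an invariant built from $\gamma$ and the surviving component of $V_i$; passing to the limit and comparing the normalization tied to the choice $\Lambda$ on the general linear side with the one tied to the collapse pattern $x$ on the unitary side collapses all of this into the single sign $\langle \Lambda, x \rangle$. The hard part will be this bookkeeping together with the analytic control in the degeneration step --- making rigorous sense of the regularized nilpotent orbit integral, checking that only the leading germ survives the limit, and ruling out any spurious Hermitian class in the sum; by contrast the descent and the invocation of the regular semisimple transfer are essentially formal.
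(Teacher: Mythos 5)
Your overall scaffolding --- degenerate matching regular semisimple orbits to the nilpotent one, use the Jacquet--Rallis transfer identity for the family, and pass to a germ expansion --- is indeed the strategy the paper uses. But the proposal has a genuine structural gap in the limit analysis, and as written the argument cannot produce the stated identity.

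The problem is that you treat the degeneration as if it isolates a \emph{single} nilpotent germ on the $\mathfrak{gl}$-side while producing a \emph{sum} over Hermitian classes $x$ on the unitary side. This is backwards. Concretely: along any one-parameter family of the type you describe (scaling $v_i$ for $i\notin\Lambda$ and $v_j^*$ for $j\in\Lambda$ by $s$), the geometric invariant $b_\bullet$ tends to $0$ in \emph{every} $S_1$-block, so the germ expansion of $\mathrm{Orb}(f,(\gamma,v(s),v^*(s)))$ contains contributions from \emph{all} nilpotent orbits $(\gamma,v_{\Lambda_1},v^*_{S_1\setminus\Lambda_1})$ with $\Lambda_1\subseteq S_1$, each weighted by a sign $\prod_{i\in S_1\setminus\Lambda_1}\chi(\varepsilon_i)$ where $\varepsilon_i$ is the block-wise scaling parameter (this is precisely the shape of the Shalika germ here; the ``limiting orbit has trivial stabilizer'' does not mean only one nilpotent $\mathrm{GL}(V)$-orbit sits over the limiting point of the quotient). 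At the same time, for the \emph{fixed} matching unitary datum $\delta$ the limit of $\mathrm{Orb}(f_W,(\delta,w(s)))$ gives the \emph{single} term $\int_{\mathrm{U}(W)/T_\delta}f_W(g\delta g^{-1},0)\,d\overline g$ --- there is no sum over $x$ for a single degenerating family; the support concentrates on one Hermitian class, not all of them. So matching germs yields an identity of the form
\begin{equation*}
\int_{\mathrm{U}(W)/T_\delta}f_W(g\delta g^{-1},0)\,d\overline g
=\omega(\gamma,v,v^*)\sum_{\Lambda_1\subseteq S_1}\mathrm{Orb}\bigl(f,(\gamma,v_{\Lambda_1},v^*_{S_1\setminus\Lambda_1})\bigr),
\end{equation*}
which is the transpose of what you want. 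Moreover, with a single scalar $s$ the signs $\chi(\varepsilon_i)$ are all equal, so they cannot distinguish between different subsets $\Lambda_1$; your family does not isolate the $\Lambda_1=\Lambda$ germ no matter how $s\to0$.

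What is missing is the inversion. One has to run the degeneration for a full family of regular semisimple orbits $(\gamma,v,v^*x)$ with $x$ ranging over $F[\gamma]^\times/\mathrm{Nm}_{E[\gamma]/F[\gamma]}(E[\gamma]^\times)$ (a multi-parameter, not one-parameter, choice of square classes in each block); each choice of $x$ selects one conjugacy class $\delta_x$ on the unitary side and yields its own germ identity as above, with the $\Lambda_1$-term on the right weighted by $\langle\Lambda_1,x\rangle$. Only then, applying finite Fourier inversion on $\prod_{S_1}\mathbb{Z}/2\mathbb{Z}$, can you extract the single term for a chosen $\Lambda$ on the left and assemble the sum $\sum_x\langle\Lambda,x\rangle\int_{\mathrm{U}(W_x)/T_{\delta_x}}f_W(g\delta_x g^{-1},0)\,d\overline g$ on the right. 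The proposal never performs this inversion, and the informal picture of ``each consistent way the switched-off directions may collapse'' contributing a term of the $x$-sum does not correspond to any actual analytic mechanism in the limit.

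A secondary caution: you assert that ``the transfer factor extends continuously'' in the limit, which is fine because $\omega$ depends only on $v$, but the germ analysis also needs to track that the regularized nilpotent orbit integral (definition (2.1) in the paper, with the $|t|^{\pm s}$ normalization) is precisely the coefficient that survives, and that the $\log_q$-terms of the $S_2$-blocks can be discarded by a separate elementary argument; you should not conflate ``convergent'' with ``no regularization needed.''
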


We shall explain in a vague way what these mean. On the right side, the summation of $x$ is a summation over $H^1(F,T_\gamma)$. For each $x$, we defines an element $\delta_x$ that is stable conjugate to $\delta$. When $x$ varies, $\delta_x$ gives all the conjugacy classes in the stable conjugacy class of $\delta$. For each $\Lambda$, we define a character $\langle \Lambda, \cdot \rangle$ on $H^1(F,T_\gamma)$. For each character $\kappa$ of $H^1(F,T_\gamma)$, we can find a unique $\Lambda$ such that $\langle \Lambda, x \rangle=\kappa(x)$. Now on the left side, the choice of $\Lambda$ gives a $\gamma$-nilpotent orbit, and we have an orbit integral $\textup{Orb}(f, \cdot )$ along this orbit.
Therefore, the equation reduces any $\kappa$-orbits integrals to a single nilpotent orbit integrals on $\mathfrak{gl}(V)\times V \times V^*$. Using this equation for dimension $n$, $a$ and $b$, we reduce endoscopic identities to identities involving nilpotent orbit integrals on the general linear side, which can be proved by parabolic induction.

Finally, to prove the above equation, the idea is to choose regular semisimple orbits close to the nilpotent orbits. For each choice, we have an identity of orbit integrals of matching functions. Then we take limit and subtract the equation from the limit. The fact that the nilpotent orbit integrals appear in this limit is an consequence of the germ expansion principle.

\bigskip

Since we are proving something that is already known, it is important to explain what results we need to use. We need two ingredients.

The first ingredient is the existence of transfer and its compatibility with Fourier transforms in the Jacquet-Rallis transfer (theorem \ref{JR exist}, theorem \ref{JR commute}). This is proved in \cite{Wei1} by a purely local method without using the fundamental lemma.

The second ingredient is the existence of transfer and its compatibility with Fourier transform in the Jacquet-Langlands transfer between inner forms of unitary Lie algebras (theorem \ref{JL exist}, theorem \ref{JL commute}). This is a special case of conjecture \ref{MC} when $G$ and $H$ are inner forms of unitary groups. This case is proved in \cite{Wa2} without any assumption because fundamental lemmas between inner forms are trivial. Except for the Jacquet-Langlands transfer, our proof is purely local. In particular, we do not reduce to positive characteristic cases. The same method also shows the fundamental lemma in the Jacquet-Rallis case implies the fundamental lemma in the endoscopic case.  Finally, it is interesting to see whether the method of Kazhdan and Varshavsky can be applied directly to the Jacquet-Rallis setting, thus yield another proof of the Jacquet-Rallis fundamental lemma. The author hopes to return to this topic in the future.

\bigbreak

Here is the structure of the paper. In section \ref{JR review}, we review basic facts about the Jacquet-Rallis transfer. In section \ref{LS review}, we review basic facts about the endoscopic transfer and work it out in details in the case of unitary Lie algebras. Then, we prove theorem \ref{XY} in section \ref{Nil}. Finally, we prove our main theorem in section \ref{final}.

Throughout this paper, we work over a non-archimedean local field $F$ of characteristic zero.  And $E$ will be a quadratic extension over $F$.

\begin{acknowledgement}
\textup{I would like to express my great gratitude to my advisor Wei Zhang, for suggesting this problem to me and providing marvelous insights. I also want to thank Professor Herv\'e Jacquet for helpful discussions when preparing this paper.}

 \end{acknowledgement}

\section{Review of the Jacquet-Rallis transfer} \label{JR review}

In this section, we review the Jacquet-Rallis transfer. The main reference is \cite{Rallis1}.

\subsection{General linear case}

Let $E/F$ be a quadratic extension of non-archimedean local fields and we denote by $\chi$ the corresponding quadratic character of $F^\times$.

Let $V$ be a vector space over $F$ of dimension $n$. We denote by $\langle \ ,  \ \rangle$ the pairing between $V$ and $V^*$. Let Ad($g$) be the adjoint action of $g\in$ GL$(V)$ on $\mathfrak{gl}(V)$. For  $g\in$ GL$(V)$ and $v^* \in V^*$, define $v^*g$ by the formula
$$\langle v^*g,v\rangle=\langle v^*,gv\rangle.$$

Consider the following action of GL($V$) on $\mathfrak{gl}(V)\times V\times V^*$:
$$g.(x,v,v^*)=(\textup{Ad}(g)x,gv,v^*g^{-1}).$$

For $(x,v,v^*) \in \mathfrak{gl}(V)\times V\times V^*$ and $i=0,...,n-1$, define the following invariants:

$$a_i=\textup{coefficient of }t^i \textup{ in } \det(\textup{I}t-x),$$
$$b_i=\langle v^*,x^iv\rangle.$$

Let $\{\langle v^*,x^{i+j}v\rangle\}_{i,j}$ denote the matrix with $(i,j)$ entries $\langle v^*,x^{i+j}v\rangle$. Define

$$\Delta(x,v,v^*)=\det(\{\langle v^*,x^{i+j}v\rangle\}_{i,j}).$$

An element $(x,v,v^*)$ is called regular semisimple if its orbit is closed with maximal dimension. It is known that $(x,v,v^*)$ is regular semisimple precisely when $\Delta(x,v,v^*) \neq 0$ (Theorem 6.1 in \cite{Rallis1}). Also, two regular semisimple elements have the same invariants precisely when they are in the same orbit (Prop. 6.2 in \cite{Rallis1}).

Fix a Haar measuer $dg$ on GL$(V)$, given a function $f \in C_c^\infty (\mathfrak{gl}(V)\times V\times V^*)$ and a regular semisimple element $(x,v,v^*)$, we define its orbit integral

$$\textup{Orb}(f,(x,v,v^*))=\int_{\textup{GL}(V)}f(\textup{Ad}(g),gv,v^*g^{-1})\chi(g)dg.$$

Later, we will need the following generized ``nilpotent orbit integrals''. Fix $x \in \mathfrak{gl}(V)$ regular semisimple in the usual sense. Let $F[x]=\prod_{i=1}^m F_i$, where each $F_i$ is a field. Write $\{1,...,m\}=S_1 \amalg S_2$. Here $S_1$ consists of all $i$ such that $F_i \nsupseteq E$ and $S_2$ consists of all $i$ such that $F_i \supseteq E$. Let $V_i=F_iV$ and $V_i^*=V^*F_i$, so we have the decomposition $V=\bigoplus V_i$ and $V^*=\bigoplus V_i^*$. We write vectors according to this decomposation. 

Consider the orbit of $(x,v,v^*)$, where $v=(v_1,...,v_m)$ and $v^*=(v_1^*,...,v_m^*)$, with the following property:
\bigbreak

\begin{enumerate}

\item For $i \in S_1$, exactly one of the $v_i$ and $v^*_i$ is $0$. 
\item When $v_i \neq 0$, we require that $\{v_i,xv_i,x^2v_i,...,x^{\dim V_i-1}v_i\}$ form a  basis of $V_i$. Similar condition applies when $v^*_i \neq 0$.
\item For each $i \in S_2, v_i=v_i^*=0$.

\end{enumerate}
\bigbreak
These $(x,v,v^*)$ are not regular semisimple. We define its orbit integral in the following way. In the special case when $S_2=\emptyset$, we define $\textup{Orb}(f,(x,v,v^*))$ as 
$$\textup{Orb}(f,(x,v,v^*))=\int_{\textup{GL(V)}}f(\textup{Ad}(g)x,gv,v^*g^{-1})\chi(g)|g|^sdg \bigg \rvert_{s=0}$$

In the general case, let $T$ denote the centralizer of $x$ in GL$(V)$, so $T=F[x]^\times=\prod F_i^\times$. Set $T_1=\prod_{i \in S_1}F_i$. We write $t=(t_1,...t_m)$ for this decomposation. Define

\begin{equation} \label{converge}
\textup{Orb}(f,(x,v,v^*))=\int_{\textup{GL(V)}/T} \bigg(\int_{T_1}f(\textup{Ad}(g)x,gtv,v^*t^{-1}g^{-1})\prod_{i \in S_1}|t_i|^{\pm s}\chi(t)\chi(g)dt \bigg \rvert_{s=0}\bigg) d\overline{g}.
\end{equation}

 We use $+1$ if $v_i \neq 0$ and $-1$ if $v_i^* \neq 0$. This definition makes sense because of the following lemma:

\begin{lemma}
The inner integral over $T_1$ in \textup{(\ref{converge})} converges when \textup{Re($s$)} $> 0$ and  admits meromorphic continuation to the whole complex plane which at holomorphic at $0$. Its value at $0$ is well defined as a locally constant and compactly supported function on $\textup{GL($V$)}/T$.
\end{lemma}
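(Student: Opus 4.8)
The plan is to recognise the inner integral over $T_1$ in (\ref{converge}) as an iterated Tate--Godement--Jacquet zeta integral and to use the fact that, for $i\in S_1$, the characters occurring are \emph{nontrivial}. Fix $g$. By condition (2), when $v_i\neq0$ the vectors $v_i,xv_i,\dots,x^{\dim V_i-1}v_i$ form an $F$-basis of $V_i$, so $V_i$ is a free $F_i$-module of rank one generated by $v_i$, and $t_i\mapsto t_iv_i$ is the restriction to $F_i^\times$ of an $F$-linear isomorphism $F_i\to V_i$; likewise $t_i\mapsto v_i^*t_i^{-1}$ when $v_i^*\neq0$. Substituting $t_i\mapsto t_i^{-1}$ in the factors with $v_i^*\neq0$ (which preserves $dt$, turns $|t_i|^{-s}$ into $|t_i|^{+s}$, and fixes $\chi_i:=\chi\circ N_{F_i/F}$ since $\chi$ has order two), and writing $\chi(t)=\prod_i\chi\bigl(N_{F_i/F}(t_i)\bigr)$ because $\det_F t=\prod_iN_{F_i/F}(t_i)$, the inner integral takes the form
\[
\int_{\prod_{i\in S_1}F_i^\times}\widetilde\Phi_g(u)\prod_{i\in S_1}|u_i|^{s}\,\chi_i(u_i)\,du,
\]
where $\widetilde\Phi_g$ is the restriction to $\prod_{i\in S_1}F_i^\times$ of a function in $C_c^\infty\bigl(\prod_{i\in S_1}F_i\bigr)$: it is locally constant because $f$ is, and compactly supported because the linear maps above are proper and $f$ has compact support.

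The key point is that for each $i\in S_1$ the character $\chi_i=\chi\circ N_{F_i/F}$ is a \emph{nontrivial} quadratic character of $F_i^\times$. Quadraticity is clear; and by the norm limitation theorem of local class field theory, $\chi\circ N_{F_i/F}$ is trivial if and only if $N_{F_i/F}(F_i^\times)\subseteq N_{E/F}(E^\times)=\ker\chi$, which holds precisely when $E\subseteq F_i$ --- that is, exactly for $i\in S_2$, which has been excluded. This is the one place where the partition $\{1,\dots,m\}=S_1\amalg S_2$ enters: for $i\in S_2$ one would have $\chi_i$ trivial and a genuine pole at $s=0$.

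The analytic assertions then follow from the one-variable theory. For $\phi\in C_c^\infty(F_i)$ and $\chi_i$ nontrivial quadratic, split $Z_i(s,\phi)=\int_{F_i^\times}\phi(u)|u|^s\chi_i(u)\,du$ into $|u|>1$ and $|u|\le1$. The first is a finite sum, entire in $s$. On the second $\phi$ eventually equals $\phi(0)$; if $\chi_i$ is ramified the resulting tail vanishes by orthogonality of characters and $Z_i$ is entire, while if $\chi_i$ is unramified the tail is a geometric series of ratio $\chi_i(\varpi_i)q_i^{-s}=-q_i^{-s}$, contributing a factor $(1+q_i^{-s})^{-1}$. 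In either case $Z_i(s,\phi)$ converges absolutely for $\mathrm{Re}(s)>0$, is a rational function of $q_i^{-s}$, and is holomorphic except possibly where $q_i^{-s}=-1$ --- points on the imaginary axis, away from $s=0$ (where the factor is $2$). Integrating the $u_i$ ($i\in S_1$) one at a time --- Fubini being legitimate for $\mathrm{Re}(s)>0$ by absolute convergence --- writes the inner integral as a finite sum of products of such $Z_i$'s, hence it converges for $\mathrm{Re}(s)>0$, continues meromorphically, and is holomorphic at $s=0$.

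It remains to treat the dependence on $\overline g\in\mathrm{GL}(V)/T$. Since $f$ is invariant under some open compact subgroup and $g\mapsto(\mathrm{Ad}(g)x,\,g\cdot(-),\,(-)\cdot g^{-1})$ varies continuously, the family $\widetilde\Phi_g$ depends on $\overline g$ locally constantly and takes only finitely many values on any compact set, so the value at $s=0$ is locally constant in $\overline g$. If $\widetilde\Phi_g\neq0$ then $\mathrm{Ad}(g)x$ lies in the (compact) first-coordinate projection of the support of $f$; since $x$ is regular semisimple its orbit is closed and the orbit map $\mathrm{GL}(V)/T\to\mathfrak{gl}(V)$ is proper, so such $\overline g$ remain in a compact set, whence compact support. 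The step I expect to be the actual content is verifying that the potential poles avoid $s=0$, i.e.\ that $\chi_i$ is nontrivial for $i\in S_1$; the remainder is routine manipulation of Tate integrals.
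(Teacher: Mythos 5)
Your proposal is correct and follows essentially the same approach as the paper: both reduce the multi-variable integral to one-variable Tate-type zeta integrals and invoke holomorphy at $s=0$ for nontrivial characters. You supply details the paper leaves implicit --- most usefully, the verification via the norm limitation theorem that $\chi\circ N_{F_i/F}$ is nontrivial precisely when $i\in S_1$, and the properness argument for compact support --- but the underlying strategy is identical.
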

\begin{proof}
In general, given a compactly supported function $f$ on a local field $F$ and $\chi$ a unitary character of $F^\times$, the integral $$\int_{F^\times} f(t)|t|^s\chi(t)dt^\times$$
converge for Re($s$) $> 0$ and admits a meromorphic continuation to the whole complex plane. When $\chi$ is nontrivial, the meromorphic continuation is holomorphic at $0$.

Now to prove the lemma, it suffices to write $f$ in (\ref{converge}) as a product of functions, then the meromorphic continuation and being holomorphic at $0$ are reduced to the above statement.

If we change $g$ to $gt_0$. The vaule of the inner integral in (\ref{converge}) after evaluating at s=0 does not change. So its a well defined function on $\textup{GL$(V)$}/T$. Now since $x$ is regular semisimple, it is easy to check this function is locally constant and compactly supported. 

\end{proof}

\subsection{Unitary case} 

As before, let $E/F$ be a quadratic extension of non-archimedean local fields. There are two isomorphism classes of non-degenerate Hermitian spaces of dimension $n$ for the extension $E/F$. They are distinguished by the determinant of their Hermitian matrices as an element in $F^\times/\textup{Nm}_{E/F}(E^\times)$. Let $W$ be one of the two Hermitian spaces. Let $\langle \ , \ \rangle$ be the Hermitian pairing and U$(W)$ the corresponding unitary group over $F$. Throughout this paper, let $\mathfrak{u}(W)$ be the \textit{twisted} Lie algebra of U$(W_i)$ over $F$. Namely,

$$\mathfrak{u}(W)=\{x \in \textup{End}(W)|\langle xu , v\rangle=\langle u , xv\rangle \textup{ for all }u,v \in W \}.$$

Since we will never use the usual Lie algebras of unitary groups, this notation will not cause any confusion. Let $\sqrt{\tau}$ be a generator of the extension $E/F$, then multiplication by $\sqrt{\tau}$ induces an isomorphism between $\mathfrak{u}(W)$ and the usual Lie algebras. We use the twisted version to avoid this dependence on $\tau$ as our statements are more natural for the twisted Lie algebra. In most cases, statements about Lie algebras are trivially equivalent to the statements about twisted Lie algebras. And we will in general ignore justifications of these equivalences. 

We have an adjoint action of U$(W)$ on $\mathfrak{u}(W)$. Consider the action of U$(W)$ on $\mathfrak{u}(W) \times W$ by

$$g.(x,w)=(\textup{Ad}(g)x,gw).$$

For $(x,w) \in \mathfrak{u}(W) \times W$ and $i=0,...,n-1$, define the following invariants:

$$a_i=\textup{coefficients of }t^i \textup{ in }\det(\textup{I}t- x),$$

$$b_i=\langle w, x^iw\rangle.$$

Take a basis of $W$ and let $H$ be the Hermitian matrix with respect to this basis. Then $x \in \mathfrak{u}(W)$ implies $x^tH=H \overline{x}$, equivalently $H^{-1}x^tH=\overline{x}$. Taking characteristic polynomial of both sides, we deduce that $a_i \in F$. Also $$\overline{b_i}=\overline{\langle w, x^iw\rangle}=\langle  x^iw,w \rangle=\langle w, x^iw\rangle=b_i.$$
So $b_i \in F$ as well.

Let $\{\langle w,x^{i+j}w\rangle\}_{i,j}$ denote the matrix with $(i,j)$ entries $\langle w,x^{i+j}w\rangle$. Define

$$\Delta(x,w)=\det(\{\langle w,x^{i+j}w\rangle\}_{i,j}).$$

As in the general linear case, following the same proof as in \cite{Rallis1}, it can be seen that $(x,w)$ is regular semisimple precisely when $\Delta(x,w)\neq 0$. Also, two regular semisimple elements have the same invariants precisely when they are in the same orbit.

When $(x,w)$ is regular semisimple, then $\{x,xw,x^2w,...,x^{n-1}w\}$ form a basis of $W$ and $\{\langle w,x^{i+j}w\rangle\}_{i,j}$ is the Hermitian matrix with respect to this basis. In particular, $\chi(\Delta(x,w))$ only depends on the isomorphism class of the Hermitian space $W$.

Fix a Haar measure $dg$ on U$(W)$, given a function $f \in C_c^\infty(\mathfrak{u}(W)\times W)$ and a regular semisimple $(x,w)$, we define the orbit integral

$$\textup{Orb}(f,(x,w))=\int_{\textup{U}(W)}f(\textup{Ad}(g)x,gw)dg.$$

\subsection{The transfer} \label{the transfer}

We first define the transfer of orbits. Given a regular semisimple element in $\mathfrak{gl}(V)\times V\times V^*$ and a regular semisimple element in $\mathfrak{u}(W)\times W$, they match if their invariants $a_i$ and $b_i$ are the same for $i=0,...,n-1$. Here $n=\textup{dim}V=\textup{dim}W$. Notice that this definition only involves regular semisimple elements. 

Let $W_0$ and $W_1$ represent the two isomorphism classes of Hermitian spaces of dimension $n$. It is proved in \cite{Wei3} that matching orbits gives a bijection between regular semisimple orbits in $\mathfrak{gl}(V)\times V\times V^*$ and the disjoint union of the regular semisimple orbits in $\mathfrak{u}(W_i)\times W_i$ $(i=0,1)$.

Now we define the transfer of functions. The Jacquet-Rallis transfer of functions is a transfer between $f \in C_c^\infty(\mathfrak{gl}(V)\times V\times V^*)$ and a pair $\{f_0,f_1\}$, where $f_i \in C_c^\infty(\mathfrak{u}(W_i)\times W_i)$ for  $i=0,1$.

We shall define $f$ and $\{f_0,f_1\}$ are transfers if for all the matching orbits $(x,v,v^*)$ and $(y,w)$ where $(y,w) \in \mathfrak{u}(W_i) \times W_i$, the following identity is satisfied:

$$\omega(x,v,v^*)\textup{Orb}(f,(x,v,v^*))=\textup{Orb}(f_i,(y,w)).$$

Here $\omega(x,v,v^*)$ is the transfer factor defined as follows. Fix a generator $\Omega$ of $\wedge^{\textup{top}}V$, 
\begin{equation}  \label{transfer}
\omega(x, v, v^*) = \chi(v\wedge x v \wedge ... \wedge x^{n-1}v/\Omega).
\end{equation}
Notice that $\omega$ is independent of $v^*$. We have the following theorem:

\begin{theorem}[Existence of Transfers] \label{JR exist}
For any $f \in C_c^\infty(\mathfrak{gl}(V)\times V\times V^*)$, there exists a transfer $\{f_0,f_1\}$. Conversely, for any pair $\{f_0,f_1\}$, there exists a transfer $f$.

\end{theorem}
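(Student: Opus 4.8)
The plan is to recognise orbital integrals on both sides as functions on one common base and to show that the two resulting function spaces coincide, working locally near each semisimple invariant. Write $\mathcal{A}=\mathbb{A}^{2n}_F$ for the affine space with coordinates $(a_0,\dots,a_{n-1},b_0,\dots,b_{n-1})$. By the invariant-theoretic facts recalled above (following \cite{Rallis1} and \cite{Wei3}), $\mathcal{A}$ is the categorical quotient both for the $\textup{GL}(V)$-action on $\mathfrak{gl}(V)\times V\times V^*$ and for the $\textup{U}(W_i)$-actions, the regular semisimple orbits map bijectively onto one and the same open $\mathcal{A}^{\mathrm{rs}}\subset\mathcal{A}$, and $\mathcal{A}^{\mathrm{rs}}(F)$ decomposes as $\mathcal{A}^{\mathrm{rs}}_0(F)\sqcup\mathcal{A}^{\mathrm{rs}}_1(F)$ according to which Hermitian class realises a given invariant. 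Then $f\mapsto\big(a\mapsto\omega(x,v,v^*)\,\textup{Orb}(f,(x,v,v^*))\big)$ and $\{f_0,f_1\}\mapsto\textup{Orb}(f_0,\cdot)\sqcup\textup{Orb}(f_1,\cdot)$ define two linear maps into the space of functions on $\mathcal{A}^{\mathrm{rs}}(F)$, and the theorem asserts exactly that they have the same image; the twist by $\omega$ is present precisely so that the boundary behaviour of the two images matches.

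First I would reduce to a statement local on $\mathcal{A}(F)$: cover $\mathcal{A}(F)$ by small compact open neighbourhoods of its points, pull back a subordinate partition of unity to $\mathfrak{gl}(V)\times V\times V^*$ and to the $\mathfrak{u}(W_i)\times W_i$, and treat functions supported over one neighbourhood at a time. Near a point $\underline a$ whose fibre has nonzero semisimple part $y$, Harish-Chandra descent on the (twisted) Lie algebra — which amounts to restriction to the centraliser and so requires no nontrivial slicing — identifies the transfer problem near $\underline a$ with the transfer problem near the origin for the centraliser data. When the centraliser is smaller than the ambient group, this is dispatched by induction on the rank: organising the situation by the factorisation of the characteristic polynomial of $y$ over $F$ and by which factor fields contain $E$, the descended problem is a product of lower-rank Jacquet-Rallis problems near their origins (for factor fields not containing $E$), purely general-linear problems (where the transfer is essentially the identity), and purely unitary inner-form problems (handled by \cite{Wa2}, since the fundamental lemma is trivial between inner forms). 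One then multiplies the transferred pieces back together and checks that descent carries $\omega$ to the product of the lower transfer factors up to an explicit, locally constant sign. What remains is a neighbourhood of the most degenerate point — equivalently, the transfer problem near the origin of $\mathfrak{gl}(V)\times V\times V^*$ itself — i.e. the matching of the "nilpotent" germ expansions of the two sides after the $\omega$-twist.

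The hard part will be this last comparison at the origin, which is a genuine piece of local harmonic analysis rather than formal bookkeeping: one must show that the germ relations controlling the jumps of $\omega\cdot\textup{Orb}(f,\cdot)$ across the singular strata of $\mathcal{A}^{\mathrm{rs}}(F)$ agree with those controlling $\textup{Orb}(f_0,\cdot)\sqcup\textup{Orb}(f_1,\cdot)$. Waldspurger's route would deduce this from the fundamental lemma; the content of \cite{Wei1} is a self-contained local argument — bootstrapping from small rank and exploiting the interplay with the Fourier transform, which preserves the relevant support and homogeneity constraints — that establishes it directly. Granting that input, the forward direction follows from the descent, and the converse follows by characterising the image of $f\mapsto\omega\cdot\textup{Orb}(f,\cdot)$ intrinsically (local constancy, compact support on the base, and the germ relations) and noting that this characterisation is symmetric in the two sides, so that the function attached to a matched pair $\{f_0,f_1\}$ automatically lies in it. Accordingly I expect the germ/base-case comparison to be the only real obstacle; everything else is descent and bookkeeping, and in the present paper this ingredient is taken from \cite{Wei1} rather than reproved.
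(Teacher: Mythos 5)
The paper does not prove this theorem itself; it simply cites \cite{Wei1} for both the existence of transfer and the compatibility with Fourier transform, remarking only that the argument there is local and independent of the fundamental lemma (and correcting a sign in the latter). Your proposal ends in the same place — after laying out a quotient-and-descent framework you defer the decisive germ comparison to \cite{Wei1} — so both "proofs" reduce to the same citation; I would only caution that your speculative reconstruction (an allegedly symmetric intrinsic characterisation of the image that makes the converse formal, and Harish-Chandra descent to germs at the origin) is not a faithful outline of the actual argument in \cite{Wei1}, which runs as a joint induction on three families of statements with the Fourier transform and a Cayley-type change of variables carrying the weight, but since the paper only cites the result this discrepancy is harmless here.
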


Now consider Fourier transforms. Fix an additive character $\psi$ of $F$.

In the general linear case, we consider the non-degenerate symmetric bilinear form tr$(XY)$ on $\mathfrak{gl}(V)$. Fix an additive Haar measure $dY$ on $\mathfrak{gl}(V)$. For any $f \in C_c^\infty(\mathfrak{gl}(V)\times V\times V^*)$, we define the patial Fourier transfer on the $\mathfrak{gl}(V)$ factor as

$$\mathcal{F}_1(f)(X,v,v^*)=\int_{\mathfrak{gl}(V)}f(Y,v,v^*)\psi(\textup{tr}(XY))dY.$$

In the unitary case, we consider the non-degenerate symmetric bilinear form $\textup{tr}(XY)$ on $\mathfrak{u}(W)$. We check that tr(XY) takes value in $F$: Take a basis of $W$ and let $H$ denote the Hermitian matrix with respect to this basis. Then $X, Y \in \mathfrak{u}(W)$ implies $H^{-1}X^tH= \overline{X}$ and $H^{-1}Y^tH= \overline{Y}$. Hence  $H^{-1}X^tY^tH= \overline{XY}$. This implies tr$(XY)$ takes value in $F$.

Fix an additive Haar measure $dY$ on $\mathfrak{u}(W)$. For any $f \in C_c^\infty(\mathfrak{u}(W)\times W)$, define

$$\mathcal{F}_1(f)(X,w)=\int_{\mathfrak{u}(W)}f(Y,w)\psi(\textup{tr}(XY))dY.$$

\begin{theorem}[Fourier Transform] \label{JR commute}

Let $n=\textup{dim}V=\textup{dim}W$. There exists a constant $c$ independent of the matching functions, such that if $f$ and $\{f_0,f_1\}$ are transfers, then $c\mathcal{F}_1(f)$ and $\{\mathcal{F}_1(f_0),(-1)^{n-1}\mathcal{F}_1(f_1)\}$ are transfers.

\end{theorem}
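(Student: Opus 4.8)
The theorem is established in \cite{Wei1}; here I indicate the strategy I would follow. Passing to orbital integrals, let $I_{\mathrm{GL}}$ be the space of functions on the regular semisimple locus of $\mathfrak{gl}(V)\times V\times V^*$ of the form $\mathrm{Orb}(f,\cdot)$, and let $I_0,I_1$ be the analogous spaces for $\mathfrak{u}(W_i)\times W_i$. Since the bilinear form $\mathrm{tr}(XY)$ is invariant and $\mathcal{F}_1$ leaves the vector factors untouched, $\mathcal{F}_1$ commutes with the group action, hence preserves the space of $\chi$-twisted invariant distributions, and by duality together with the density of regular semisimple orbital integrals among such distributions it descends to operators $\mathcal{F}_{\mathrm{GL}}^\vee$ on $I_{\mathrm{GL}}$ and $\mathcal{F}_i^\vee$ on $I_i$. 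By the orbit matching of \cite{Wei3} and Theorem \ref{JR exist} (existence of transfer in both directions), the transfer normalized by $\omega$ is an isomorphism $\mathrm{tr}\colon I_{\mathrm{GL}}\xrightarrow{\ \sim\ }I_0\oplus I_1$. Writing $\varepsilon=(1,(-1)^{n-1})$ for the sign twist, the theorem is equivalent to the vanishing, for the correct constant $c$, of the defect operator
$$\Phi \;=\; \varepsilon\circ(\mathcal{F}_0^\vee\oplus\mathcal{F}_1^\vee)\circ\mathrm{tr}\;-\;c\,\mathrm{tr}\circ\mathcal{F}_{\mathrm{GL}}^\vee\colon I_{\mathrm{GL}}\longrightarrow I_0\oplus I_1.$$

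The core of the argument would be an induction on $n$ by semisimple descent. If a point $(y,w)$ on the unitary side has $y$ not central, a neighbourhood of its orbit is, after Harish-Chandra descent to the centralizer of the semisimple part of $y$, a product of lower-dimensional Jacquet-Rallis situations, and one checks that $\mathcal{F}_1$, the transfer factor $\omega$, and the orbit matching all restrict compatibly to these (with the analogous smaller constants and signs multiplying correctly). By the inductive hypothesis $\Phi$ then vanishes away from the stratum where $y$ is central; there, translating the central part into a character of $\mathrm{tr}(Y)$ and using the homogeneity of the partial Fourier transform under $x\mapsto\lambda x$, of the transfer factor $\omega$, and of the generalized orbital integrals of section \ref{JR review} (which the partial transform, not touching $v,v^*$, respects), one finds that $\Phi$ restricted to the remaining piece is multiplication by a single scalar independent of the matching functions, which the correct choice of $c$ normalizes to zero.

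What is left is to determine $c$ and, crucially, to produce the sign $(-1)^{n-1}$ on the $W_1$-component; for this I would evaluate the identity on one explicit pair of matching test functions — Gaussians, or characteristic functions of suitable self-dual lattices — so that $\mathcal{F}_1$ is an elementary Gaussian integral and both orbital integrals are finite sums. The comparison then reduces to a Weil-index computation: the quadratic form $\mathrm{tr}(XY)$ on $\mathfrak{u}(W)$ has a discriminant whose class in $F^\times/\mathrm{Nm}_{E/F}(E^\times)$ separates $W_0$ from $W_1$, and the ratio of the attached Weil indices $\gamma_\psi$ accounts for $(-1)^{n-1}$, while the residual mismatch between the self-dual normalizations of the measures $dY$ on $\mathfrak{gl}(V)$ and on $\mathfrak{u}(W_i)$ together with the Gauss sum of $\psi$ is absorbed into $c$. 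I expect the main obstacle to be the descent step: one must verify carefully that in this relative, twisted setting the Jacquet-Rallis transfer factor and the partial Fourier transform are genuinely compatible with Harish-Chandra descent at semisimple points, so that the induction is even set up correctly; by contrast the base-case Gauss-sum bookkeeping, though it is what forces the sign, is routine.
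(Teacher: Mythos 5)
Your proposal misses what the paper's ``proof'' of this theorem actually consists of. The paper does not reprove Theorem~\ref{JR commute}: it cites the $B_n$ part of Theorem~4.17 of~\cite{Wei1} and then observes that the factor $(-1)^{n-1}$ is \emph{missing} from the statement in~\cite{Wei1}. The entire content of the paper's discussion is to locate the source of the missing sign: it is equation~(4.30) of~\cite{Wei1}, $\kappa^\eta(w,w')=\lambda\kappa(w_i,w_i')$, which is deduced from the Kloosterman-sum identity (Theorem~4.12 of~\cite{Wei1}); the correct version carries an extra $\pm1$ depending on whether $i=0$ or $i=1$, and propagating this through the $A_n\to C_n\to B_n$ induction of~\cite{Wei1} yields $(-1)^{n-1}$ on the $W_1$-component. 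You open by asserting ``the theorem is established in~\cite{Wei1}'' and then sketch a strategy as though the statement (including the sign) is already known there; this skips precisely the point the paper is making, namely that~\cite{Wei1} as written does not give the stated sign, and that the author is correcting it.

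As to the strategy you sketch, it is plausible in broad outline but is not the argument of~\cite{Wei1} and has its own gaps. Zhang's proof proceeds by a coupled induction across the $A_n$, $B_n$, $C_n$ situations and the sign arises from a Kloosterman-sum identity sensitive to the isomorphism class of $W_i$, not from a Weil-index computation. Your proposed determination of $(-1)^{n-1}$ by evaluating on Gaussians does not obviously go through in the Jacquet-Rallis setting: $\psi\circ\mathrm{tr}(X^2)$ is not in $C_c^\infty$, so it has no transfer and no convergent orbital integrals, and the discriminant/Weil-index heuristic you invoke actually describes the \emph{Jacquet-Langlands} comparison between $\mathfrak{u}(W_0)$ and $\mathfrak{u}(W_1)$ (Proposition~\ref{E1} of the paper), which is a different factor; the paper is careful to keep these two $(-1)^{n-1}$'s separate and then observe that they cancel in the main theorem. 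Finally, your descent step asserts that a neighbourhood of a non-central orbit reduces to ``a product of lower-dimensional Jacquet-Rallis situations''; this oversimplifies, since the centralizer of a semisimple $\gamma$ splits according to whether $F_i\supseteq E$ or not (the $S_1$/$S_2$ dichotomy of Sections~\ref{JR review} and~\ref{Nil}), and the $S_2$-factors do not produce smaller Jacquet-Rallis problems of the same shape. You flag the descent step as the main obstacle; that is correct, but the sign correction, which you treat as ``routine bookkeeping,'' is in fact the nontrivial content here.
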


The similar theorem for Fourier transforms on the $V\times V^*$ (resp. $W$) factor are also true. But we only need the above theorem in this paper.

\begin{proof}[Proof of theorem $\ref{JR exist}$ and theorem $\ref{JR commute}$]
Both theorem \ref{JR exist} and theorem \ref{JR commute} are proved in \cite{Wei1}. We remark that both proofs are local and do not rely on the fundamental lemma. However, the factor $(-1)^{n-1}$ in theorem \ref{JR commute} is missed in \cite{Wei1}. We shall explain now why there is a $(-1)^{n-1}$ factor. We shall use freely the notation in section 4 of \cite{Wei1}. 

The theorem we cite here is the $B_n$ part of theorem 4.17 in \cite{Wei1}. It is proved along with the $A_n$ part and $C_n$ part by induction. For each $C_n$, there is a $-1$ missing, which by induction, gives $(-1)^{n-1}$ for the $B_n$. The $-1$ is missing from $C_n$ because of the equation (4.30) in \cite{Wei1}. It claims

$$\kappa^\eta(w,w^\prime)=\lambda \kappa(w_i,w_i^\prime).$$

This is proved by reducing it to the identity of Kloosterman sums (theorem 4.12 \cite{Wei1}). However, in this reduction, different $W_i$ gives a $\pm 1$. The correct version is 

$$\kappa^\eta(w,w^\prime)=\pm \lambda \kappa(w_i,w_i^\prime).$$
where one finds $+1$ if $i=0$ and $-1$ if $i=1$.

\end{proof}

Finally, we state the fundamental lemma. However, we will not need this later in this paper. Suppose $E/F$ is unramified. Let $O_E$ and $O_F$ be their rings of intergers.

In the general linear side, take $V=F^n$, consider the characteristic function $1_{\mathfrak{k}}$ of the set M$_n(O_F) \times O_F^n \times O_F^n \subset \mathfrak{gl}(V)\times V\times V^*$.

In the unitary side, take $W_0=E^n$ with the split Hermitian form $\langle \ , \ \rangle$ defined by the anti-diagonal matrix $H$ with all entries $1$.  Consider the characteristic function $1_{\mathfrak{k}_0}$ of the set $\mathfrak{u}(W_0)(O_E) \times O_E^n$. Let $p$ be the residue characteristic of $F$. 
\begin{conjecture}[Fundamental Lemma] \label{Fundamental}
When $p$ is odd, $1_{\mathfrak{k}}$ and $\{1_{\mathfrak{k}_0},0\}$ are transfers.
\end{conjecture}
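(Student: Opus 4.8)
Unwinding the definitions, Conjecture \ref{Fundamental} is the assertion that $\omega(x,v,v^*)\,\textup{Orb}(1_{\mathfrak{k}},(x,v,v^*))=\textup{Orb}(1_{\mathfrak{k}_0},(y,w))$ for every regular semisimple $(x,v,v^*)$ whose matching orbit $(y,w)$ lies in $\mathfrak{u}(W_0)\times W_0$, together with $\textup{Orb}(1_{\mathfrak{k}},(x,v,v^*))=0$ whenever the matching orbit lies in $\mathfrak{u}(W_1)\times W_1$. The first step I would take is to rewrite each side as a weighted lattice count: via $g\mapsto L:=g^{-1}O_F^n$ the left-hand orbital integral becomes $\sum_L\chi(L)$, the sum running over the $O_F$-lattices $L\subset V$ with $xL\subseteq L$, $v\in L$ and $\langle v^*,L\rangle\subseteq O_F$, where $\chi(L)=\chi(\det g)\in\{\pm1\}$ is well defined since $\chi$ is unramified; and $\textup{Orb}(1_{\mathfrak{k}_0},(y,w))$ becomes the number of self-dual $O_E$-lattices in $W_0$ that are $y$-stable and contain $w$. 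One also observes that for the unramified normalizations $1_{\mathfrak{k}}$ and $1_{\mathfrak{k}_0}$ are their own partial Fourier transforms, so Theorem \ref{JR commute} forces the constant $c$ there to equal $1$ once the fundamental lemma is known --- a consistency check, not a proof.

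The approach I would actually pursue is to adapt the method of Kazhdan and Varshavsky \cite{Kazhdan} to the Jacquet-Rallis setting. In the endoscopic case they deduce the fundamental lemma for the unit element from transfer together with its Fourier compatibility, that is, from Conjecture \ref{MC}, by descending via Harish-Chandra descent and the germ expansion to a statement supported at topologically nilpotent elements and then to an explicit finite computation. In the Jacquet-Rallis setting the two inputs this requires are exactly the ones we have at hand, namely Theorem \ref{JR exist} and Theorem \ref{JR commute}. Concretely I would (i) reduce to $x$ elliptic regular by parabolic descent, using the decomposition $F[x]=\prod_i F_i$ and the fact that an $x$-stable lattice is, up to a controlled defect, a product of lattices in the $V_i$; (ii) reduce further to $x$ topologically nilpotent, by translating each block by a suitable central element and using that $1_{\mathfrak{k}}$ is supported near the origin; (iii) at such orbits both orbital integrals are finite --- they count points of generalized affine Springer fibers and of a unitary analogue --- and I would compare them, using the self-duality of $1_{\mathfrak{k}}$ under Theorem \ref{JR commute} together with the germ expansion principle in the same way as in the proof of Theorem \ref{XY}, to reduce the identity to a combinatorial statement provable by induction on $n$. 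The vanishing on the $W_1$-orbits I would expect to drop out of the same analysis, the relevant point being that for $E/F$ unramified the non-split space $W_1$ carries no unimodular Hermitian $O_E$-lattice, which forces a cancellation of the signs $\chi(L)$ on the general linear side.

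There is also a geometric route available in equal characteristic: pass to $\mathbb{F}_q((t))$ for $p$ large by the Cluckers-Hales-Loeser transfer principle, realize the two families of orbital integrals as traces of Frobenius on the cohomology of fibers of a Hitchin-type morphism, and conclude from Ng\^o's support theorem in its Jacquet-Rallis incarnation, which is the Laumon-Ng\^o-type theorem proved by Yun; removing the restriction on $p$ then calls for the global argument of W.\ Zhang or the purely local argument of Beuzart-Plessis. \textbf{The main obstacle} in the local route I favor is precisely the adaptation step: one must reconcile the Jacquet-Rallis transfer factor $\omega$ with the combinatorial normalizations of Kazhdan and Varshavsky and carry the germ-expansion bookkeeping through a target with two Hermitian spaces $W_0$ and $W_1$ rather than a single endoscopic group --- and it is exactly this adaptation that the present paper leaves open. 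In the geometric route the obstacle is instead the support (decomposition) theorem itself, which is deep and admits no elementary substitute, together with the loss of small $p$. Since Conjecture \ref{Fundamental} is not used elsewhere in this paper, I would in the end be content to cite one of the existing proofs.
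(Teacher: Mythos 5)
The paper does not prove Conjecture~\ref{Fundamental}: it is stated as a conjecture, not used in the sequel, and the accompanying remark attributes the known cases to Z.~Yun (positive characteristic) and J.~Gordon (characteristic zero, $p$ large), noting the small-$p$ case open. Your proposal correctly recognizes this and concludes by citing the literature, which matches the paper's treatment exactly.

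Your preliminary reformulations are sound: the lattice-counting expression $\sum_L\chi(L)$ for $\textup{Orb}(1_{\mathfrak{k}},\cdot)$ is standard and the observation that $1_{\mathfrak{k}}$ and $1_{\mathfrak{k}_0}$ are fixed by the partial Fourier transforms (under the unramified normalizations) pins $c=1$ in Theorem~\ref{JR commute}, a genuine consistency check. The observation that $W_1$ carries no self-dual $O_E$-lattice when $E/F$ is unramified is also correct and does explain why the $W_1$ half of the fundamental lemma is plausible. Your proposed adaptation of Kazhdan--Varshavsky to the Jacquet--Rallis setting is exactly what the paper itself flags as an open direction at the end of the introduction (``it is interesting to see whether the method of Kazhdan and Varshavsky can be applied directly to the Jacquet-Rallis setting''), and you are right to call out the reconciliation of $\omega$ with their normalizations and the two-target bookkeeping as the substantive obstacle. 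Two small updates to your final remarks: the paper's remark says ``not known for small $p$,'' but the later purely local proof of Beuzart-Plessis removes the $p$-restriction, so the literature now covers all odd $p$; and Gordon's appendix to Yun's paper handles the characteristic-zero transfer for large $p$ directly rather than via Cluckers--Hales--Loeser in the form you describe, though the spirit is the same.
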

\begin{remark}
The conjecture is known when $p$ is large enough. This is proved in \cite{Yun} by Z. Yun in the positive characteristic case, extended to characteristic zero by J. Gordon in the appendix to \cite{Yun}. It is not known for small $p$.
\end{remark}

 \section{Endoscopic transfer} \label{LS review}

In this section, we review the endoscopic transfer in unitary Lie algebras.

\subsection{Generality on endoscopy} \label{gen}

The main reference for this section is \cite{Kazhdan}. Let $F$ be a non-archimedean local field of characteristic zero, and $G$ a reductive group over $F$. 

\begin{definition}
An endoscopic triple of $G$ is a triple $(H,s,\eta)$. Here H is a quasi-split group over F, $s$ is a regular semisimple element in $\widehat{G}$, and $\eta$ is an embedding of the complex algebraic groups: $\widehat{H} \hookrightarrow \widehat{G}$. The triple has to satisfy the following conditions$:$
\begin{itemize}
  \item $\eta(\widehat{H})$ is the identity component of the centralizer of $s$ in $\widehat{G}$.
  \item The conjugacy class of $\eta$ is Galois equivariant.
  \item $\eta^{-1}(s)$ is fixed by the Galois action on $\widehat{H}$.
\end{itemize}
\end{definition}

The endoscopic triple are designed to compare $\kappa$-orbit integrals on $G$ with stable orbit integrals on $H$. To state this, we recall the matching of stable conjugacy classes and the transfer of functions. We review this in several steps. In the following, let $(H,s,\eta)$ be an endoscopic triple of $G$. We denote by $\overline{G}$ the base change of $G$ to the algebraic closure of $F$, $G(F)^{r.s.}$ the regular semisimple elements and $\mathfrak{g}$ the Lie algebras of $G$. Similar notations apply to other algebraic groups.

\begin{itemize}
  \item For any embedding of an $F$-torus $T$ into $G$ as  a maximal torus, there is a Galois invariant conjugacy class of embeddings of $\widehat{T}$ into $\widehat{G}$. This is defined by conjugating $\overline{T}$ to $T^{\prime}$ inside $\overline{G}$, where $T^{\prime}$ is a fixed maximal split torus of $\overline{G}$ used to define $\widehat{G}$. The conjugation between $\overline{T}$ and $T^{\prime}$ is not unique, but they all give the same conjugacy class of embeddings: $\widehat{T} \hookrightarrow \widehat{G}$.

  \item Let $T \hookrightarrow H$ and $T \hookrightarrow G$ be two embeddings of an $F$-torus $T$ into $H$ and $G$ as maximal tori. The two embeddings of $T$ match if the corresponding map $\widehat{T} \hookrightarrow \widehat{H}$ and $\widehat{T} \hookrightarrow \widehat{G}$ are conjugate after applying the emmbeding $\eta: \widehat{H} \hookrightarrow \widehat{G}$.
  
  \item Let $\gamma^H \in H(F)^{r.s.}$ and $\gamma \in G(F)^{r.s.}$. Then $\gamma^H$ and $\gamma$ match if there exist an $F$-torus $T$, two matching embeddings $T \hookrightarrow G$ and $T \hookrightarrow H$ and an element $\gamma_0 \in T(F)$, such that $\gamma_0$ maps to $\gamma^H$ and $\gamma$ by the two embeddings.
  
  \item Let $x^H \in \mathfrak{h}(F)^{r.s.}$ and $x \in \mathfrak{g}(F)^{r.s.}$. Then $x^H$ and $x$ match if there exist an $F$-torus $T$, two matching embeddings $T \hookrightarrow G$ and $T \hookrightarrow H$ and an element $x_0 \in \mathfrak{t}(F)$, such that $x_0$ maps to $x^H$ and $x$ by the differentials of the two embeddings.
  
\end{itemize}

\begin{remark}
It is easy to see that the matching between embeddings to maximal F-tori and regular semisimple elements is a matching between stable conjugacy classes in $G$ and $H$. Let $G^*$ be the quasi split inner form of $G$. Every embedding $T \hookrightarrow H$ has a matching $T \hookrightarrow G^*$. This is not true for $G$. An element $x^H \in \mathfrak{h}(F)^{r.s.}$ is $G^*$-regular semisimple if it matches a regular semisimple element $x^*$ in $\mathfrak{g^*}(F)$.
\end{remark}

\begin{definition}
  Let $f^H \in C^{\infty}_c(\mathfrak{h}(F))$ and $f \in C^{\infty}_c(\mathfrak{g}(F))$. Then $f^H$ and $f$ match if the following conditions are satisfied (see below for definitions of these terms)$:$
  
  \textup{1)} For any matching orbits $x^H \in \mathfrak{h}(F)^{G^*-r.s.}$ and $x \in \mathfrak{g}(F)^{r.s.}$, we have the identity:
  \begin{equation} \label{endos}
 \textup{Orb}^{\textup{St}}(f^H,x^H)=\Delta(x^H,x)\textup{Orb}^\kappa(f,x).
 \end{equation}
 
 \textup{2)} If there does not exist $x \in \mathfrak{g}(F)^{r.s.}$ that matches $x^H \in \mathfrak{h}(F)^{G^*-r.s.}$, then 
   \begin{equation} \label{more}
 \textup{Orb}^{\textup{St}}(f^H,x^H)=0.
 \end{equation}
\end{definition}

We explain these terms. The stable orbit integral of $f^H$ at $x^H$ is defined as:  

$$\textup{Orb}^{\textup{St}}(f^H,x^H)=\sum_{x^H_i \thicksim x^H}\textup{Orb}(f^H,x^H_i)$$ 
where $x^H_i$ run through the conjugacy classes in $\mathfrak{h}$ that are stable conjugagte to $x^H$.

Let $T=C_G(x)$ and $\mathfrak{t}$ be the Lie algebra of $T$.  The conjugacy classes in $\mathfrak{g}$ that are stable conjugate to $x$ is a torsor over ker$(H^1(F,T)\rightarrow H^1(F,G))$. Therefore for $x_1$ and $x_2$ that are stable conjugate to $x$, we can define $\textup{inv}(x_1,x_2) \in$ ker$(H^1(F,T)\rightarrow H^1(F,G))$ as the element that maps $x_1$ to $x_2$.

The character $\kappa$ is a character of this group. It is canonically defined by the endoscopic triple $(H,s,\eta)$ via Tate-Nakayama isomorphism. 

\begin{theorem}[Tate-Nakayama Isomorphism \cite{Serre}]
Let F be a non-archimedean local field and T a F-torus. Then the Galois cohomology group $H^1(F,T)$ is canonically isomorphic to the quotient of $$\{x \in X_*(T) | \textup{tr}(x)=0\}$$ by its subgroup generated by $$\{\sigma x-x| \sigma \in \textup{Gal}(\overline{F}/F), x \in X_*(T)\}.$$
\end{theorem}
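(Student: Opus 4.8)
The final statement is classical Tate--Nakayama duality; I would only sketch the argument, the real content being imported from \cite{Serre}. The plan is to (i) replace the absolute Galois group by a finite Galois group through which the action on $T$ factors, (ii) recognise the group displayed on the right-hand side of the statement as the modified (Tate) cohomology group $\hat H^{-1}(\Gamma,X_*(T))$, and (iii) realise the desired isomorphism as cup product with the local fundamental class, using Tate's theorem on class formations.

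\emph{Step 1: reduction to a finite group.} First I would fix a finite Galois extension $L/F$ that splits $T$ and set $\Gamma=\textup{Gal}(L/F)$. Over $L$ the torus becomes split, $T_L\simeq\mathbb{G}_{m,L}^{\,d}$ with $d=\dim T$, so Hilbert~90 gives $H^1(\textup{Gal}(\overline F/L),T(\overline F))=0$; the inflation--restriction sequence then identifies $H^1(F,T)$ (meaning $H^1(\textup{Gal}(\overline F/F),T(\overline F))$) with $H^1(\Gamma,T(L))$, and since ordinary and modified cohomology agree in positive degree this equals $\hat H^1(\Gamma,T(L))$. The splitting also identifies $T(L)$ with $X_*(T)\otimes_{\mathbb Z}L^\times$ as $\Gamma$-modules, the action being diagonal on the tensor product.

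\emph{Step 2: cup product with the fundamental class.} Local class field theory furnishes the fundamental class $u_{L/F}\in H^2(\Gamma,L^\times)$, and Tate's theorem asserts that for every finitely generated $\mathbb Z$-free $\Gamma$-module $M$ cup product $x\mapsto x\cup u_{L/F}$ is an isomorphism $\hat H^{r}(\Gamma,M)\xrightarrow{\sim}\hat H^{r+2}(\Gamma,M\otimes_{\mathbb Z}L^\times)$ for all $r\in\mathbb Z$. Applying this with $M=X_*(T)$ and $r=-1$ gives
$$\hat H^{-1}(\Gamma,X_*(T))\ \xrightarrow{\sim}\ \hat H^{1}(\Gamma,X_*(T)\otimes_{\mathbb Z}L^\times)=\hat H^1(\Gamma,T(L))=H^1(F,T).$$
Finally I would unwind the source: by definition, $\hat H^{-1}(\Gamma,M)=\ker\bigl(N_\Gamma\colon M\to M\bigr)/I_\Gamma M$, where $N_\Gamma=\sum_{\sigma\in\Gamma}\sigma$ and $I_\Gamma M$ is the subgroup generated by the elements $\sigma m-m$. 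For $M=X_*(T)$ one has $N_\Gamma x=\textup{tr}(x)$, and since $X_*(T)$ is torsion-free this kernel and this quotient are unchanged if $L$ is enlarged; together with the compatibility of fundamental classes under inflation this shows the isomorphism is independent of $L$, i.e.\ canonical, and that its source is exactly the group in the statement.

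The only genuinely substantial ingredient is Tate's theorem in Step~2, which rests on local class field theory (the existence and functoriality of $u_{L/F}$); everything else is formal. I expect the one delicate point in writing this out carefully to be the bookkeeping with modified cohomology in negative degree, in particular checking that the explicit description of $\hat H^{-1}$ and the compatibility with inflation are stated with the correct normalisations.
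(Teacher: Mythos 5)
The paper does not prove this statement; it is quoted as a black box with a citation to Serre's \textit{Local Fields}, so there is no ``paper's own proof'' to compare against. Your sketch is the standard argument and it is correct: reduce to a finite splitting extension $L/F$ with $\Gamma=\mathrm{Gal}(L/F)$ via Hilbert 90 and inflation--restriction, apply Tate's theorem (cup product with the local fundamental class $u_{L/F}\in H^2(\Gamma,L^\times)$) with $M=X_*(T)$ and $r=-1$, and unwind $\hat H^{-1}(\Gamma,X_*(T))=\ker(N_\Gamma)/I_\Gamma X_*(T)$, which is precisely the group displayed in the theorem once one notes that $N_\Gamma$ is the trace and that both the kernel of the trace and the augmentation submodule are unchanged under enlarging $L$ (the former because $X_*(T)$ is $\mathbb Z$-free, the latter because the $\Gamma'$-action factors through $\Gamma$). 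The one point worth flagging explicitly if you write this out is that $I_\Gamma X_*(T)\subset\ker(N_\Gamma)$ (immediate since $N_\Gamma(\sigma x-x)=0$), so the quotient in the statement is well-posed; you assume this silently. Otherwise there are no gaps, and the level of detail is appropriate for a cited classical result.
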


We now explain how $\kappa$ is defined. The dual group $\widehat{H}$ is equipped with a canonical maximal torus $T_{\widehat{H}}$. The torus $T_{\widehat{H}}$ has a Galois action and $s \in T_{\widehat{H}}$ is invariant under this action. There is an isomorphism $X_*(T) \simeq X^*(T_{\widehat{H}})$ well defined up to conjugation by elements in the Weyl group of $H$. Since $s$ gives a character of $X^*(T_{\widehat{H}})$, via this isomorphism, $s$ also defines a character of $X_*(T)$. The resulting character $\kappa$ is independent of the choice of the isomorphism since $s$ lies in the center of $H$. Now $\kappa$ is trivial on $\{\sigma x-x| \sigma \in \textup{Gal}(\overline{F}/F), x \in X_*(T)\}$ since $s$ is invariant under the Galois action on $T_{\widehat{H}}$ and the Galois actions on $T_{\widehat{H}}$ and $\overline{T}$ differ by an element in the Weyl group of $H$. Finally, by Tate-Nakayama isomorphism, $\kappa$ is a character of $H^1(F,T)$.

The $\kappa$-orbit integrals are then defined as

$$\textup{Orb}^\kappa(f,x)=\sum_{x_i \thicksim x}\textup{Orb}(f,x_i)\kappa(\textup{inv}(x,x_i))$$
where $x_i$ run through the conjugacy classes in $\mathfrak{g}$ that are stable conjugagte to $x$.

Finally, $\Delta(x^H,x)$ is the Langlands-Shelstad transfer factor. It is defined as a function on $\mathfrak{h}^{G^*-r.s} \times \mathfrak{g}^{r.s.}$ such that $\Delta(x,y)$ is not zero if and only if $x$ and $y$ match. The definition of $\Delta(x^H,x)$ is delicate and we will omit here. We only remark that if one change $x$ and $y$ to $x^\prime$ and $y^\prime$ that are stably conjugate, then $\Delta(x,y)$ and $\Delta(x^\prime,y^\prime)$ are changed by multiplying $\kappa(\textup{inv}(y, y^\prime))$. This is evident from equation (\ref{endos}).

We have the following theorem regrading transfer of functions:

\begin{theorem}[Existence of Transfers] \label{LS exist}
For any $f \in C_c^\infty(\mathfrak{g})$, there exists $f_1 \in C_c^\infty(\mathfrak{h})$ matches $f$.
\end{theorem}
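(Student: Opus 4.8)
The plan is to deduce the existence of endoscopic transfer for the unitary Lie algebra $\mathfrak{u}(W)$ and its endoscopic group $\mathfrak{h} = \mathfrak{u}(W_a) \oplus \mathfrak{u}(W_b)$ from the two ingredients already at hand: the Jacquet--Rallis transfer (Theorem \ref{JR exist}) and the Jacquet--Langlands transfer between inner forms of unitary Lie algebras (Theorem \ref{JL exist}). First I would reduce to the quasi-split case: the endoscopic transfer from $\mathfrak{g}$ to $\mathfrak{h}$ factors, up to Jacquet--Langlands transfer (which exists unconditionally and is a bijection-type statement on orbit integrals since fundamental lemmas between inner forms are trivial), through the quasi-split inner form $\mathfrak{g}^*$; so it suffices to produce transfers for each $W$ in a fixed stable class, and to know that a function on $\mathfrak{g}$ whose $\kappa$-orbit integrals all vanish on $G^*$-regular semisimple classes not meeting $\mathfrak{g}$ can be transferred to zero there. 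Then the heart of the argument is to encode the $\kappa$-orbit integral identity \eqref{endos} as a special case of, or a consequence of, the Jacquet--Rallis matching: one packages $(f_0,f_1)$ on $\mathfrak{u}(W_0)\times W_0 \sqcup \mathfrak{u}(W_1)\times W_1$ and uses the nilpotent orbit integral identity of Theorem \ref{XY}, applied in dimensions $n$, $a$ and $b$, to rewrite the sum over $x \in H^1(F,T_\gamma)$ weighted by the character $\langle \Lambda, \cdot\rangle$ (which realizes an arbitrary $\kappa$) as a single nilpotent orbit integral on the general linear side $\mathfrak{gl}(V)\times V \times V^*$.

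Concretely, the key steps in order are: (1) fix the endoscopic datum $(H,s,\eta)$ for $G = \textup{U}(W)$ with $H = \textup{U}(W_a)\times\textup{U}(W_b)$, and make explicit the matching of tori, the invariant $\textup{inv}$, the character $\kappa$, and the Langlands--Shelstad factor $\Delta(x^H,x)$ in this case, comparing it with the Jacquet--Rallis transfer factor $\omega$ of \eqref{transfer}; (2) given $f \in C_c^\infty(\mathfrak{g})$, use Theorem \ref{JR exist} (plus partial Fourier transform if needed to move between $\mathfrak{gl}(V)\times V\times V^*$ and the relevant spaces) to produce candidate functions on the unitary side in dimensions $a$ and $b$; (3) invoke Theorem \ref{XY} to convert the defining endoscopic identity \eqref{endos} into an identity of nilpotent orbit integrals on the general linear side, which by parabolic induction / descent reduces to lower rank and ultimately to known cases; (4) handle the vanishing condition \eqref{more} by the same germ-expansion input, noting that classes not meeting $\mathfrak{g}$ correspond to terms that are absent from the general linear side. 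A cleaner alternative for step (4): observe that $G^*$-regular semisimple classes in $\mathfrak{h}$ not matching anything in $\mathfrak{g}$ are a union of stable classes, and one can separately choose $f^H$ supported away from them, so existence of transfer is local on the invariant space.

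The main obstacle I anticipate is step (3): making the passage through Theorem \ref{XY} genuinely yield \eqref{endos} rather than an identity that differs by transfer-factor discrepancies. The Langlands--Shelstad factor $\Delta(x^H,x)$ and the Jacquet--Rallis factor $\omega(x,v,v^*)$ are defined by quite different recipes, and reconciling them — in particular checking that the character $\langle \Lambda, \cdot\rangle$ built from a nilpotent orbit on the $\mathfrak{gl}$ side matches the Tate--Nakayama character $\kappa$ attached to $(H,s,\eta)$, up to a harmless constant independent of $f$ — will require a careful sign and normalization bookkeeping, of exactly the flavor of the $(-1)^{n-1}$ correction discussed after Theorem \ref{JR commute}. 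A secondary difficulty is ensuring the output $f^H$ is genuinely in $C_c^\infty(\mathfrak{h})$ (smoothness and compact support), which should follow from the corresponding properties on the Jacquet--Rallis side together with the local constancy and compact support built into the definition of the generalized nilpotent orbit integrals in \eqref{converge}, but must be tracked through the reduction.
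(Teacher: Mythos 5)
Your proposal identifies the right toolbox — Jacquet--Rallis existence (Theorem \ref{JR exist}), the nilpotent identity (Theorem \ref{XY}) applied in dimensions $n$, $a$, $b$, parabolic descent to bridge $\mathfrak{gl}(V_n)$ to $\mathfrak{gl}(V_a)\times\mathfrak{gl}(V_b)$, and careful bookkeeping of transfer factors — and this is essentially the strategy the paper executes in Theorem \ref{Main Theorem}. However, the role of the Jacquet--Langlands transfer is misplaced in a way that would derail the construction. You propose to use it on the $\mathfrak{g}$-side to ``reduce to the quasi-split inner form $\mathfrak{g}^*$,'' but this does not work: Jacquet--Langlands relates \emph{stable} orbit integrals between inner forms, whereas the endoscopic identity \eqref{endos} involves $\kappa$-orbit integrals with a nontrivial $\kappa$, and these are not transported across inner forms by a stable-orbit-integral identity. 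In the paper the Jacquet--Langlands step sits on the $\mathfrak{h}$-side and is essential for producing a single function in $C_c^\infty(\mathfrak{h})$: after Jacquet--Rallis transfer in dimensions $a$ and $b$, one obtains four functions living on the four products $\mathfrak{u}(W_{a,i})\times\mathfrak{u}(W_{b,j})$ ($i,j\in\{0,1\}$), and one must Jacquet--Langlands-transfer each of them to the common quasi-split space $\mathfrak{u}(W_{a,0})\times\mathfrak{u}(W_{b,0})$ before taking the signed sum $\widetilde{f^{a,b}_{0,0}}-\widetilde{f^{a,b}_{0,1}}+\widetilde{f^{a,b}_{1,0}}-\widetilde{f^{a,b}_{1,1}}$. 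Your concrete steps omit this, so they never actually yield a candidate $f^H \in C_c^\infty(\mathfrak{h})$. Finally, your ``cleaner alternative'' for the vanishing condition \eqref{more} (cutting off $f^H$ on the locus of non-matching classes) is both unnecessary and trickier than it looks — one must check smoothness and compact support of the cutoff across the singular locus — whereas the paper's direct computation shows the constructed function already has vanishing stable orbit integrals there, precisely because in that case $S_1(\delta)=\emptyset$ and the chain of identities produces an empty $\kappa$-sum.
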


Now consider Fourier transforms. Fix an additive character $\psi: F \rightarrow \mathbb{C}^\times$. Fix a $G$-invariant non-degenerate symmetric bilinear form $\langle \ , \ \rangle$ on $\mathfrak{g}$. Define the Fourier transform $\mathcal{F}$ on $C_c^\infty(\mathfrak{g})$ as follows:

$$\mathcal{F}(f)(x)=\int_{\mathfrak{g}}f(y)\psi(\langle x, y \rangle)dy.$$

It can be shown the $G$-invariant bilinear form canonically gives a $H$-invariant non-degenerate symmetric bilinear form on $\mathfrak{h}$, hence a Fourier transform $\mathcal{F}$ on $C_c^\infty(\mathfrak{h})$. (VIII.6 of \cite{Wa1})

\begin{theorem}[Fourier Transform] \label{LS commute}
There exists a constant $c$ independent of the matching functions, such that if $f \in C_c^\infty(\mathfrak{g})$ and $f_1 \in C_c^\infty(\mathfrak{h})$ match, then c$\mathcal{F}(f)$ and $\mathcal{F}(f_1)$ also match.
\end{theorem}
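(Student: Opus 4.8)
The plan is to deduce both the existence of the endoscopic transfer $f\mapsto f^H$ and its compatibility with $\mathcal{F}$ from the nilpotent orbit integral identity of theorem \ref{XY}, applied in the three dimensions $n$, $a$ and $b$, together with a parabolic descent identity on the general linear side; the only non-elementary inputs beyond theorem \ref{XY} are the Jacquet--Rallis transfer (theorems \ref{JR exist}, \ref{JR commute}) and the Jacquet--Langlands transfer between inner forms of unitary Lie algebras (theorems \ref{JL exist}, \ref{JL commute}). Here $G=\textup{U}(W)$, $H=\textup{U}(W_a)\times\textup{U}(W_b)$ with $a+b=n=\dim W$, and $W_a$, $W_b$ are the Hermitian spaces underlying the quasi-split $H$. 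Since theorem \ref{LS exist} already provides existence, I would in fact give an explicit transfer and control its Fourier transform.

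First I would construct the transfer. Given $f\in C_c^\infty(\mathfrak{u}(W))$, lift it to the Jacquet--Rallis picture by putting $\tilde f=f\otimes\mathbf{1}_U$ on $\mathfrak{u}(W')\times W'$ for the Hermitian space $W'\cong W$, and $\tilde f=0$ on the other Hermitian space, where $U$ is a small neighbourhood of $0$; then $\tilde f(\cdot,0)=f$. By the converse part of theorem \ref{JR exist} choose $f^{\mathrm{GL}}\in C_c^\infty(\mathfrak{gl}(V)\times V\times V^*)$, $\dim V=n$, matching $\tilde f$. Next perform a parabolic descent of $f^{\mathrm{GL}}$ along the parabolic $P\subset\textup{GL}(V)$ with Levi $\textup{GL}(V_a)\times\textup{GL}(V_b)$, integrating out the unipotent radical in the Lie-algebra direction and splitting the vector variables; this produces (a finite sum of) $f^a\otimes f^b$ with $f^a\in C_c^\infty(\mathfrak{gl}(V_a)\times V_a\times V_a^*)$ and $f^b$ likewise. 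Apply theorem \ref{JR exist} to $f^a$ and $f^b$, restrict the resulting unitary-side functions to the zero vector, and use theorem \ref{JL exist} to push each piece onto the quasi-split Hermitian spaces $W_a$, $W_b$; this yields $\phi^a\in C_c^\infty(\mathfrak{u}(W_a))$ and $\phi^b\in C_c^\infty(\mathfrak{u}(W_b))$, and I set $f^H=\phi^a\otimes\phi^b$.

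The verification of (\ref{endos}) for this $f^H$ is then a chain of equalities. For matching orbits $x^H=(x^H_a,x^H_b)\leftrightarrow x$, write $\textup{Orb}^{\textup{St}}(f^H,x^H)=\textup{Orb}^{\textup{St}}(\phi^a,x^H_a)\,\textup{Orb}^{\textup{St}}(\phi^b,x^H_b)$; use the Jacquet--Langlands matching to replace each factor by the stable orbit integral of $f^a(\cdot,0)$, resp.\ $f^b(\cdot,0)$, over the inner forms; use theorem \ref{XY} with trivial $\kappa$ in dimensions $a$ and $b$ to turn those into nilpotent orbit integrals of $f^a$ and $f^b$ (up to the transfer factors $\omega$ and the $\chi$-factors occurring there); use the general-linear parabolic-descent identity (proved by parabolic induction: it is an unfolding of the $\textup{GL}(V)$-orbit integral along $P$ via the Iwasawa decomposition, tracking the nilpotent vector configuration, the $|g|^s$-regularization — which is exactly the splitting of the index set $S_1$ underlying the character $\langle\Lambda,\cdot\rangle$ of theorem \ref{XY} — the twist $\chi$ and the Haar measures) to turn the product into the nilpotent orbit integral of $f^{\mathrm{GL}}$ attached to the $\Lambda$ corresponding to $\kappa$; and finally invoke theorem \ref{XY} in dimension $n$ to identify that with $\omega(x,v,v^*)^{-1}\textup{Orb}^\kappa(\tilde f(\cdot,0),x)=\omega(x,v,v^*)^{-1}\textup{Orb}^\kappa(f,x)$, the last equality because $\tilde f$ vanishes on the other inner form. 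Collecting the transfer factors and $\chi$-twists accumulated along the way, one checks they reassemble into the Langlands--Shelstad factor $\Delta(x^H,x)$ (the two are proportional and their definitions force the constant to be $1$), which is (\ref{endos}); the vanishing statement (\ref{more}) falls out of the same computation, since when $x^H$ has no match in $\mathfrak{u}(W)$ every orbit integral on the right-hand side of theorem \ref{XY} involves $\tilde f$ restricted to an inner form where it is zero by construction.

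For theorem \ref{LS commute} I would run the whole construction through $\mathcal{F}$. The endoscopic Fourier transform on $\mathfrak{u}(W)$ is taken with respect to $\textup{tr}(XY)$, which is the zero-vector restriction of the Jacquet--Rallis partial Fourier transform $\mathcal{F}_1$, and restriction to the zero vector commutes with $\mathcal{F}_1$. Hence theorem \ref{JR commute} in dimensions $n$, $a$, $b$ (with its signs $(-1)^{n-1}$, $(-1)^{a-1}$, $(-1)^{b-1}$ on the non-quasi-split components), the commutation of $\mathcal{F}_1$ with the parabolic descent up to an explicit constant, and theorem \ref{JL commute} together show that replacing $f$ by $c\,\mathcal{F}(f)$ and $f^H$ by $\mathcal{F}(f^H)$ preserves the entire chain of equalities, for a constant $c$ assembled only from the Jacquet--Rallis and Jacquet--Langlands constants and the descent normalization, in particular independent of $f$; the passage from the explicit transfer $\mathcal{F}(f^H)$ to an arbitrary transfer of $c\,\mathcal{F}(f)$ uses that two transfers differ by a function with identically vanishing stable orbit integrals and that this subspace is $\mathcal{F}$-stable (see \cite{Wa1}). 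I expect the main obstacle to be precisely the general-linear parabolic-descent step and the attendant sign bookkeeping: one must check that the descent is genuinely compatible with the $|g|^s$-regularization and the character $\langle\Lambda,\cdot\rangle$ of theorem \ref{XY}, and that the signs $(-1)^{a-1}$, $(-1)^{b-1}$ combine with the Jacquet--Langlands signs to produce exactly $(-1)^{n-1}$ — this is where the correction to \cite{Wei1} recorded after theorem \ref{JR commute} is indispensable, and making all of it fit together is the crux of the argument.
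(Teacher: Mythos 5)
Your proposal follows essentially the same chain that the paper itself uses: lift $f$ to a Jacquet--Rallis pair $\{F,0\}$ supported on a single Hermitian space, transfer to $\mathfrak{gl}(V)\times V\times V^*$, descend along a parabolic with Levi $\mathrm{GL}(V_a)\times\mathrm{GL}(V_b)$, transfer back via Jacquet--Rallis in dimensions $a$ and $b$, restrict to the zero vector, push to the quasi-split form by Jacquet--Langlands, and run the whole construction through $\mathcal{F}_1$ to track the constants, with the $(-1)^{n-1}$, $(-1)^{a-1}$, $(-1)^{b-1}$ from theorem \ref{JR commute} and proposition \ref{E1} cancelling. That outline is correct and matches section \ref{final}.

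However, there is a genuine gap at the step ``use theorem \ref{XY} with trivial $\kappa$ in dimensions $a$ and $b$,'' and in the resulting $f^H=\phi^a\otimes\phi^b$ with both $\phi^a$ and $\phi^b$ built as plain sums over the two inner forms. The parabolic descent identity (lemma \ref{lemma descent}) requires $v_2=v^*_1=0$: the $V$-vector must live entirely in the first Levi block and the $V^*$-covector entirely in the second. Tracing through theorem \ref{XY}, this forces $\Lambda=S_1(\delta_1)$ in dimension $n$, and hence $\Lambda^{(a)}=S_1^{(a)}$ (trivial $\kappa^{(a)}$, a $v$-only nilpotent) in dimension $a$, but $\Lambda^{(b)}=\emptyset$ (the sign character $\kappa^{(b)}$ on all of $S_1^{(b)}$, a $v^*$-only nilpotent) in dimension $b$. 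Concretely the $b$-factor must carry a minus sign on the non-split Hermitian space, yielding $f^{a,b}=\widetilde{f^{a,b}_{0,0}}-\widetilde{f^{a,b}_{0,1}}+\widetilde{f^{a,b}_{1,0}}-\widetilde{f^{a,b}_{1,1}}$ rather than $\phi^a\otimes\phi^b$ with uniform plus signs. Using trivial $\kappa$ in both $a$ and $b$ would require a descent lemma for $v=(v_1,v_2)$, $v^*=0$, which lemma \ref{lemma descent} does not cover (its key cancellation $nmtv=mtv$ fails once $v_2\neq 0$), and which, even if available, would land you on the \emph{stable} rather than the $\kappa$-orbit integral in dimension $n$. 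So the difficulty you flag as ``sign bookkeeping'' is not bookkeeping: your choice of $\kappa^{(b)}$ is wrong and must be the sign character, dictated by the asymmetric $v/v^*$ configuration that the parabolic descent imposes. (A minor point: the independence of the Fourier conclusion from the choice of transfer is Proposition A of \cite{Wa3}, not \cite{Wa1}.)
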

\begin{remark}
The constant $c$ are given explicitly in \cite{Wa2}. It equals the quotient of the Weil indexes of $\mathfrak{h}$ and $\mathfrak{g}$. We will come back to this in section $\ref{JJ}$. 
\end{remark}

\begin{proof}[Proof of theorem $\ref{LS exist}$ and theorem $\ref{LS commute}$]
Both theorem \ref{LS exist} and theorem \ref{LS commute} are deduced in \cite{Wa2} from the fundamental lemma. In our paper, for the case of unitary groups, we are going to deduce theorem \ref{LS exist} (resp. theorem \ref{LS commute})  from theorem \ref{JR exist} (resp. theorem \ref{JR commute}).
\end{proof}

Finally, there are fundamental lemmas assertion characteristic functions of hyperspecial maximal compact match when $H$ and $G$ are unramified. We omit the details since we do not need it in this paper.

\subsection{Stable conjugacy classes in unitary Lie algebras} \label{stable}

In this part, we study stable conjugacy in unitary Lie algebras. 

Let $W$ be a non-degenerate Hermitian space for the extension $E/F$. Recall that we denote by $\mathfrak{u}(W)$ the twisted Lie algebras: 

$$\mathfrak{u}(W)=\{x \in \textup{End}(W)|\langle xu , v\rangle=\langle u , xv\rangle \textup{ for all }u,v \in W \}.$$

Let $\delta$ be a regular semisimple element in $\mathfrak{u}(W).$ The characteristic polynomial of $\delta$ has coefficients in $F$. Let $F[\delta]=\prod_{i=1}^m F_i$, where each $F_i$ is a field. Set $E_i=E \bigotimes_F F_i$, so $E[\delta]=\prod_{i=1}^m E_i$. Finally, let $\{1,...,m\}=S_1\coprod S_2$ with $S_1=\{i|F_i \nsupseteq E\}$.

The following proposition computes the stable conjugacy of $\delta$ in the classical setting.

\begin{proposition}
The centralizer $T_\delta$ of $\delta$ in $\textup{U}(W)$ equals \textup{U}$(1)(E[\delta]^\times/F[\delta]^\times)$. We have $H^1(F, T_\delta)=\prod_{S_1}\mathbb{Z}/2\mathbb{Z}$. And the conjugacy class in the stable conjugacy class of $\delta$ is canonically a torsor of $$\textup{ker}(H^1(F, T_\delta) \rightarrow H^1(F,\textup{U}(W)))= \textup{ker}\left(\prod_{S_1}\mathbb{Z}/2\mathbb{Z} \rightarrow \mathbb{Z}/2\mathbb{Z}\right)$$ where all the map from $\mathbb{Z}/2\mathbb{Z}$ to $\mathbb{Z}/2\mathbb{Z}$ are identity.
\end{proposition}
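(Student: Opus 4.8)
The plan is to exploit the structure of $W$ as a free rank-one module over the \'etale $E$-algebra $E[\delta]$, to reduce the three assertions to Galois cohomology of norm-one tori, and to invoke local class field theory for the comparison map to $\textup{U}(W)$. Since $\delta$ is regular semisimple, $E[\delta]=\prod_i E_i$ is an \'etale $E$-algebra of dimension $n=\dim_E W$ and $W=\bigoplus_i W_i$ is free of rank one over it, with $W_i=E_i\cdot e$ for a fixed generator $e$. From $\langle \delta u,v\rangle=\langle u,\delta v\rangle$ and the $E$-sesquilinearity of $\langle\ ,\ \rangle$ one sees that the conjugation $\overline{\cdot}$ of $E/F$, extended to $E[\delta]=E\otimes_F F[\delta]$ by acting trivially on $F[\delta]$, is the adjoint involution for $\langle\ ,\ \rangle$, i.e.\ $\langle au,v\rangle=\langle u,\overline{a}v\rangle$. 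Since the trace pairing $E[\delta]\times E[\delta]\to E$ is nondegenerate, there is a unique $E[\delta]$-valued Hermitian form $h$ (for $\overline{\cdot}$) with $\langle\ ,\ \rangle=\textup{tr}_{E[\delta]/E}\circ h$; in terms of the generator $e$, $h$ is multiplication by a unit of $F[\delta]^\times$, and $a\in E[\delta]^\times$ preserves $h$ (equivalently $\langle\ ,\ \rangle$) if and only if $a\overline{a}=1$. This yields $T_\delta=\{a\in E[\delta]^\times:a\overline{a}=1\}=\prod_i \textup{U}(1)(E_i/F_i)=\textup{U}(1)(E[\delta]^\times/F[\delta]^\times)$, the first assertion.

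Next I would compute $H^1(F,T_\delta)=\prod_i H^1(F,\textup{U}(1)(E_i/F_i))$ term by term. For $i\in S_2$ the algebra $E_i=E\otimes_F F_i$ is $F_i\times F_i$ with the involution swapping the factors, so $\textup{U}(1)(E_i/F_i)\cong\textup{Res}_{F_i/F}\mathbb{G}_m$ is induced and its $H^1$ vanishes by Shapiro's lemma and Hilbert 90. For $i\in S_1$ the extension $E_i/F_i$ is a quadratic field extension, and the norm-one exact sequence $1\to\textup{U}(1)(E_i/F_i)\to\textup{Res}_{E_i/F}\mathbb{G}_m\xrightarrow{a\mapsto a\overline{a}}\textup{Res}_{F_i/F}\mathbb{G}_m\to1$ together with Hilbert 90 identifies $H^1(F,\textup{U}(1)(E_i/F_i))$ with $F_i^\times/\textup{Nm}_{E_i/F_i}(E_i^\times)$, which is $\mathbb{Z}/2\mathbb{Z}$ by local class field theory. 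Hence $H^1(F,T_\delta)=\prod_{S_1}\mathbb{Z}/2\mathbb{Z}$.

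For the last assertion I would use that, by the generalities of section~\ref{gen}, the conjugacy classes inside the stable class of $\delta$ form a torsor under $\ker\bigl(H^1(F,T_\delta)\to H^1(F,\textup{U}(W))\bigr)$, so everything reduces to identifying this map. Composing with the determinant $\det\colon\textup{U}(W)\to\textup{U}(1)(E/F)$, the induced map $H^1(F,\textup{U}(W))\to H^1(F,\textup{U}(1)(E/F))=F^\times/\textup{Nm}_{E/F}(E^\times)$ is the discriminant, hence a bijection onto $\mathbb{Z}/2\mathbb{Z}$ (there are exactly two Hermitian spaces of dimension $n$). In the module picture the composite $\textup{U}(1)(E_i/F_i)\hookrightarrow\textup{U}(W)\xrightarrow{\det}\textup{U}(1)(E/F)$ is $a\mapsto\textup{Nm}_{E_i/E}(a)$ (the $E$-determinant of multiplication by $a$ on $W_i$, identity on the other $W_j$), and chasing the connecting maps of the two norm-one sequences shows that on $H^1$ this becomes $\textup{Nm}_{F_i/F}\colon F_i^\times/\textup{Nm}_{E_i/F_i}(E_i^\times)\to F^\times/\textup{Nm}_{E/F}(E^\times)$. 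Since $[E:F]=2$ and $i\in S_1$ force $E\cap F_i=F$, the compositum $E_i=EF_i$ satisfies $\textup{Gal}(E_i/F_i)\xrightarrow{\sim}\textup{Gal}(E/F)$ by restriction, so by the norm-restriction compatibility of local class field theory $\textup{Nm}_{F_i/F}$ is an isomorphism $\mathbb{Z}/2\mathbb{Z}\to\mathbb{Z}/2\mathbb{Z}$ for each $i\in S_1$ (and is zero on the vanishing $S_2$-factors). Therefore $H^1(F,T_\delta)\to H^1(F,\textup{U}(W))$ is the sum map $\prod_{S_1}\mathbb{Z}/2\mathbb{Z}\to\mathbb{Z}/2\mathbb{Z}$, and its kernel is the stated group.

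I expect the main obstacle to be this final step: identifying the comparison map $H^1(F,T_\delta)\to H^1(F,\textup{U}(W))$ cleanly requires tracking the embedding $T_\delta\hookrightarrow\textup{U}(W)$ through the algebra $E[\delta]$ and invoking the class-field-theoretic fact that $\textup{Nm}_{F_i/F}$ induces the restriction isomorphism $\textup{Gal}(E_i/F_i)\cong\textup{Gal}(E/F)$. By contrast, the computation of $T_\delta$ and of $H^1(F,T_\delta)$ is essentially formal once the $E[\delta]$-module structure is in place.
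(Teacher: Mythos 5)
Your proof is correct and follows the same route as the paper's: identify $T_\delta$ as the norm-one group of $E[\delta]^\times$ using the involution induced by $\langle\ ,\ \rangle$, compute $H^1(F,T_\delta)$ componentwise over the factors $F_i$, and invoke the general torsor structure on conjugacy classes within a stable class. You supply substance where the paper writes only ``a simple computation'' and ``follows from standard results'': the Shapiro/Hilbert-90 argument for the $S_2$ factors, the norm-one exact sequence plus local class field theory for the $S_1$ factors, and, most usefully, the explicit identification of the comparison map $H^1(F,T_\delta)\to H^1(F,\textup{U}(W))$ as the sum map $\prod_{S_1}\mathbb{Z}/2\mathbb{Z}\to\mathbb{Z}/2\mathbb{Z}$ via the determinant to $\textup{U}(1)(E/F)$, the discriminant classification of nonarchimedean Hermitian spaces, and the norm--restriction compatibility of local class field theory applied to $E_i=EF_i$ with $E\cap F_i=F$ for $i\in S_1$. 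Your reorganization of the $T_\delta$ computation through the $E[\delta]$-valued Hermitian form $h$ is a cosmetic variation on the paper's direct verification that $\langle\alpha w_1,\alpha w_2\rangle=\langle\overline{\alpha}\alpha w_1,w_2\rangle$, but the rest is genuinely the unpacked version of what the paper leaves implicit.
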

\begin{proof}

For any $\alpha \in E[\delta]$, let $\overline{\alpha}$ be the conjugation of $\alpha$ by the unique nontrivial automorphism for the extension $E[\delta]/F[\delta]$. The identities $\langle \delta w_1,w_2 \rangle=\langle w_1,\delta w_2 \rangle$ and $\langle cw_1,w_2 \rangle=\langle w_1,\overline{c}w_2 \rangle$ ($c \in E$) together imply that for any $\alpha \in E[\delta]$, $ \langle \alpha w_1, w_2 \rangle= \langle w_1,\overline{\alpha}w_2 \rangle$.

Since Centralizer of $\delta$ in $\textup{GL}(W)$ equals $E[\delta]^\times$,  centralizer of $\delta$ in $\textup{U}(W)$ equals $\{\alpha \in E[\delta]^\times| \langle \alpha w_1, \alpha w_2 \rangle = \langle w_1,w_2 \rangle\}$. But $\langle \alpha w_1, \alpha w_2 \rangle=\langle \overline{\alpha} \alpha w_1,  w_2 \rangle$. Hence the centralizer equals U$(1)(E[\delta]^\times/F[\delta]^\times)$.

Now, a simple computation shows $H^1(F,\textup{U}(1)(E_i/F_i))$ is trivial if $F_i \supseteq E$ and equals $\mathbb{Z}/2\mathbb{Z}$ if $F_i \nsupseteq E$. Therefore $H^1(F, T_\delta)=\prod_{S_1}\mathbb{Z}/2\mathbb{Z}$. The rest of the proposition follows from standard results on stable conjugacy.

\end{proof}

In our setting, it is more natural to consider not only the conjugacy class in $\mathfrak{u}(W)$, but also its Jacquet-Langlands transfer on the other Hermitian space. So let $W_0$ and $W_1$ be the representatives of the two isomorphism classes of Hermitian spaces of dimension $n$. For $\delta_i \in \mathfrak{u}(W_i)$, we shall say $\delta_0$ and $\delta_1$ are Jacquet-Langlands transfers if they become conjugate in the algebraic closure $\overline{F}$ of $F$. When $\delta_0$ and $\delta_1$ are transfers, they are conjugate over $\overline{F}$. This conjugation defines an isomorphism between $F[\delta_0]$ and $F[\delta_1]$, which is independent of the conjugation we choose. In particular $T_{\delta_0}$ and $T_{\delta_1}$
are canonically isomorphic.

We warn the reader that even when $n$ is odd, so $\mathfrak{u}(W_0)$ and $\mathfrak{u}(W_1)$ are abstractly isomorphic, we still distinguish them as two different Lie algebras. In fact, they are different as pure inner forms.

Now fix a stable conjugacy class $\delta \in \mathfrak{u}(W_0)$.  Let $S$ be the union of the conjugacy classes in $\mathfrak{u}(W_0)$ that are stable conjugate to $\delta$ and the conjugacy classes in $\mathfrak{u}(W_1)$ that are Jacquet-Langlands transfers of $\delta$. For any $\delta^' \in S$, let $T$ be the centralizer of $\delta^'$.  Different choices of $\delta^'$ define $T$ that are canonically isomorphic to each other.

We want to show $S$ is naturally a torsor over $H^1(F,T)$. We start with the case when $m=1$, namely $F[\delta]=F_1$ is a field.

\begin{proposition}
Let $m=1$. When $F_1 \supseteq E$, then \textup{dim}$W$ is even and $S$ is a single point belongs to the split Hermitian space. When $F_1 \nsupseteq E$, then $S$ consists of two points, with one point for each Hermitian space.
\end{proposition}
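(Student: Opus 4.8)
The plan is to reduce everything to an explicit description of the relevant cohomology and a direct analysis of Hermitian forms on the $F_1$-vector space $W$ viewed as a module over $E[\delta] = E_1$. Since $m=1$, the torus $T = T_\delta = \textup{U}(1)(E_1/F_1)$, and by the previous proposition $H^1(F,T)$ is trivial when $F_1 \supseteq E$ (so $E_1 = F_1 \times F_1$ and the unitary group is just $\textup{Res}_{F_1/F}\mathbb{G}_m$, whose $H^1$ vanishes by Hilbert 90) and is $\mathbb{Z}/2\mathbb{Z}$ when $F_1 \nsupseteq E$ (so $E_1/F_1$ is a field extension). So I need to show $S$ is a torsor over this group: a single point in the first case, two points in the second, distributed as claimed over the two Hermitian spaces.

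First I would set up the correspondence between the data $(W, \langle\,,\,\rangle, \delta)$ and a Hermitian form over $E_1/F_1$. Given $\delta$ regular semisimple in $\mathfrak{u}(W)$ with $F[\delta] = F_1$, the space $W$ becomes a free $E_1$-module of rank one (regular semisimplicity forces the rank-one condition, as $\dim_E W = [F_1:F]$ and $[E_1:E] = [F_1:F]$ when $F_1 \nsupseteq E$, while $\dim_E W = [F_1 : F]$ and $E_1 = F_1 \times F_1$ in the split case). Using the identity $\langle \alpha w_1, w_2\rangle = \langle w_1, \overline{\alpha} w_2\rangle$ from the previous proof, one checks that $\langle\,,\,\rangle$ descends to an $E_1/F_1$-Hermitian form $h$ on the rank-one $E_1$-module $W$, via $\langle w_1, w_2 \rangle = \textup{tr}_{F_1/F}(\lambda \cdot h(w_1,w_2))$ for a suitable trace-compatible functional $\lambda$; conversely such an $h$ together with the $E_1$-module structure recovers $(W,\langle\,,\,\rangle,\delta)$ up to isomorphism. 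The point is that conjugacy classes (over $F_1$, equivalently over $F$) of pairs $(W,\delta)$ with the given characteristic polynomial, together with Jacquet--Langlands transfers, correspond bijectively to isomorphism classes of rank-one Hermitian $E_1/F_1$-spaces — which is exactly the set $S$.

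Then the computation is: rank-one Hermitian forms over $E_1/F_1$ are classified by their discriminant in $F_1^\times / \textup{Nm}_{E_1/F_1}(E_1^\times)$. When $F_1 \supseteq E$, i.e. $E_1 \cong F_1 \times F_1$, every element of $F_1^\times$ is a norm, so there is a unique class; one then checks $\dim W = [F_1:F]$ is even (because $[F_1 : E] = [F_1:F]/2$ is the $E$-rank and each factor contributes its degree — more precisely $\dim_E W = [F_1:F]$, and the fact that $E \subseteq F_1$ forces $[F_1:F]$ even) and that the resulting Hermitian space is the split one (a rank-one space over $F_1 \times F_1$ transports to a hyperbolic-type space over $E/F$; this is where I verify the "split Hermitian space" assertion). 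When $F_1 \nsupseteq E$, $E_1/F_1$ is a genuine quadratic field extension, $F_1^\times/\textup{Nm}_{E_1/F_1}(E_1^\times) \cong \mathbb{Z}/2\mathbb{Z}$, so there are exactly two classes; I must check the two classes land in the two distinct Hermitian spaces over $E/F$, which follows by comparing discriminants: $\textup{disc}_{E/F}(W) = \textup{Nm}_{F_1/F}(\textup{disc}_{E_1/F_1}(h))$ up to norms, and $\textup{Nm}_{F_1/F}$ induces an isomorphism $F_1^\times/\textup{Nm}(E_1^\times) \xrightarrow{\sim} F^\times/\textup{Nm}(E^\times)$ in this local field situation (both are $\mathbb{Z}/2\mathbb{Z}$ and the map is surjective hence bijective). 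Finally, I would identify this classification with the torsor structure over $H^1(F,T)$: the relevant $H^1$ is $H^1(F_1, \textup{U}(1)(E_1/F_1)) = F_1^\times/\textup{Nm}_{E_1/F_1}(E_1^\times)$ by Hilbert 90 plus the long exact sequence, matching the discriminant classification exactly.

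The main obstacle I expect is the bookkeeping in the split case $F_1 \supseteq E$ — specifically, making precise the claim that $S$ is a \emph{single} point and that it lies on the split Hermitian space, rather than just showing $H^1$ is trivial. One has to rule out that the two Hermitian spaces $W_0, W_1$ could both carry a $\delta$ with the given characteristic polynomial; the argument is that only $W_0$ (split) admits the $E_1$-module structure when $E_1$ is split, because the induced form is automatically split. Getting the normalization of the trace pairing $\lambda$ and tracking how discriminants transform through $\textup{tr}_{F_1/F}$ is routine but needs care; everything else is a short cohomological computation.
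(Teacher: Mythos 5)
Your proposal is correct and takes essentially the same route as the paper: both reduce to classifying rank-one Hermitian forms over $E_1/F_1$ by their discriminant in $F_1^\times/\textup{Nm}_{E_1/F_1}(E_1^\times)$ (trivial in the split case, $\mathbb{Z}/2\mathbb{Z}$ otherwise), and then use the compatibility of $\textup{Nm}_{F_1/F}$ with norm groups from local class field theory to see that in the inert case both Hermitian spaces over $E/F$ occur. One small slip: since the $E/F$-Hermitian form is $E$-valued and $h$ is $E_1$-valued, the trace functional should be $\textup{tr}_{E_1/E}$ (as the paper writes $\langle x,y\rangle = \textup{tr}_{E_1/E}(a\,x\overline{y})$), not $\textup{tr}_{F_1/F}$.
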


\begin{proof}
Let $E_1=E \bigotimes_F F_1$.
To find whether S contains element in $\mathfrak{u}(W_i)$, we need to compute the determinant of any Hermitian structure on $E_1$ such that $\delta \in \mathfrak{u}(E_1)$. Given such an Hermitian form $\langle \ , \ \rangle$ and using $\delta \in \mathfrak{u}(E_1)$, we find $\langle x , y \rangle=\text{tr}_{E_1/E}(ax\overline{y})$ for some $a \in F_1^\times$. If $a_1$ and $a_2$ differ by multiplication of elements in Nm($E_1^\times/F_1^\times$), then the Hermitian space are isomorphic. So we only need to consider $a \in F_1^\times/\text{Nm}(E_1^\times/F_1^\times)$.

When $F_1 \supseteq E$, then $F_1^\times/\text{Nm}(E_1^\times/F_1^\times)=\{1\}$ and the Hermitian space is unique. It has to be the split one since only the quasi-split unitary Lie algebras contain elements that are not Jacquet-Langlands transfer from its non-trivial inner form. This can also be verified directly.

When $F_1 \nsupseteq E$, then $F_1^\times/\text{Nm}(E_1^\times/F_1^\times)=\mathbb{Z}/2\mathbb{Z}$. The determinant of $\text{tr}_{E_1/E}(ax\overline{y})$ equals Nm$_{F_1/F}(a)$ times a constant. By class field theory, this surjects to $F^\times/\text{Nm}_{E/F}(E^\times)$. We conclude that both Hermitian spaces occur.

Finally, the fact that $S$ intersects with each $\mathfrak{u}(W_i)$ with at most one point follows from the computation of the conjugacy class in a fixed Lie algebras as $m=1$.

\end{proof}

We can now treat the general case.
 
 \begin{proposition} \label{bijection}
 
 There is a natural isomorphism $S \rightarrow \prod_{S_1}\mathbb{Z}/2\mathbb{Z}$. It defines $S$ as a torsor over $H^1(F,T)$ which extends the classical torsor structure over $ker(H^1(F,T) \rightarrow H^1(F,U(W))$.
 
\end{proposition}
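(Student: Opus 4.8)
The plan is to reduce to the case $m=1$, already settled above, by means of the $E[\delta]$-isotypic decomposition, and then to match the resulting explicit picture of $S$ with the cohomological torsor structure one factor at a time. First I would build a bijection $S \cong \prod_{i=1}^m S^{(i)}$, where $S^{(i)}$ denotes the set playing the role of $S$ for the $m=1$ datum $(E_i/F_i,\,\delta|_{W_0^{(i)}})$ attached to the isotypic decomposition $W_0 = \bigoplus_i W_0^{(i)}$ of $W_0$ as an $E[\delta]=\prod_i E_i$-module. Given $\delta' \in S$, say $\delta' \in \mathfrak{u}(W')$ with $W'$ one of $W_0, W_1$: since $\delta'$ is conjugate to $\delta$ over $\overline{F}$, there is a canonical isomorphism $F[\delta'] \cong F[\delta]=\prod_i F_i$ (hence $E[\delta'] \cong \prod_i E_i$), so $W'$ decomposes orthogonally as $W' = \bigoplus_i (W')^{(i)}$ into Hermitian $E_i$-spaces with $\delta'|_{(W')^{(i)}}$ in the corresponding twisted Lie algebra and $\overline{F}$-conjugate to $\delta|_{W_0^{(i)}}$; this tuple is the image of $\delta'$. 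Conversely, an orthogonal direct sum of such local data is again of the required type, being $\overline{F}$-conjugate to $\delta$ and having an underlying $n$-dimensional Hermitian space, hence isomorphic to $W_0$ or to $W_1$. The two constructions are mutually inverse because the isotypic decomposition is canonical and additive on orthogonal sums. By the $m=1$ proposition, $S^{(i)}$ is a single point for $i \in S_2$ and a two-element set for $i \in S_1$; choosing the base points $\delta|_{W_0^{(i)}}$ then gives the natural bijection $S \cong \prod_i S^{(i)} \cong \prod_{S_1}\mathbb{Z}/2\mathbb{Z}$ with $\delta \mapsto 0$.

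Next I would transport the torsor structure. From $T = \prod_i \textup{Res}_{F_i/F}\,\textup{U}(1)(E_i/F_i)$ one gets $H^1(F,T) = \prod_i H^1(F_i,\textup{U}(1)(E_i/F_i)) = \prod_{S_1}\mathbb{Z}/2\mathbb{Z}$, and the $m=1$ proposition identifies $S^{(i)}$, for $i \in S_1$, with $F_i^\times/\textup{Nm}_{E_i/F_i}(E_i^\times) = H^1(F_i,\textup{U}(1)(E_i/F_i))$, a torsor over itself by translation; the product over $i$ then makes $S$ a torsor over $H^1(F,T)$. To see that this extends the classical structure, recall that $\textup{ker}(H^1(F,T) \to H^1(F,\textup{U}(W_0)))$ consists of the tuples $(\epsilon_i) \in \prod_{S_1}\mathbb{Z}/2\mathbb{Z}$ with $\sum_i \epsilon_i = 0$; these are precisely the elements whose action fixes the total discriminant $\prod_i \textup{disc}\,(W')^{(i)}$, hence carries the stable class of $\delta$ inside $\mathfrak{u}(W_0)$ to itself. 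Since the classical invariant $\textup{inv}(\delta_1,\delta_2)$ of two conjugacy classes stably conjugate in $\mathfrak{u}(W_0)$ is computed by a $T(\overline{F})$-valued cocycle and $T$ is a product over $i$, that invariant decomposes factorwise, so the agreement of the two torsor structures on $S \cap \mathfrak{u}(W_0)$ reduces to the $m=1$ assertion that, on each factor $i \in S_1$, the translation action of $F_i^\times/\textup{Nm}_{E_i/F_i}(E_i^\times)$ coincides with the Tate--Nakayama-normalised action of $H^1(F_i,\textup{U}(1)(E_i/F_i))$.

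The hard part will be exactly this reconciliation of the basis-dependent description of $S^{(i)}$ in the $m=1$ proposition --- Hermitian forms $\textup{tr}_{E_i/E}(a\,x\overline{y})$ with $a$ running over $F_i^\times/\textup{Nm}_{E_i/F_i}(E_i^\times)$ --- with the cohomological twisting action, so that the product structure on $S$ is compatible with the normalisation of $\kappa$ from Section \ref{gen}. Because both sides are torsors under the same two-element group, it suffices to check a single nontrivial translation: twisting the $i$-th factor by the nontrivial class of $H^1(F_i,\textup{U}(1)(E_i/F_i))$ replaces $W_0^{(i)}$ by the other Hermitian $E_i$-space while $\delta$ still lies in the corresponding twisted Lie algebra, which is precisely the behaviour recorded in the proof of the $m=1$ proposition, since, as $a$ varies over $F_i^\times/\textup{Nm}_{E_i/F_i}(E_i^\times)$, the discriminant $\textup{Nm}_{F_i/F}(a)$ varies over both classes in $F^\times/\textup{Nm}_{E/F}(E^\times)$. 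Granting this identification, the proposition is formal.
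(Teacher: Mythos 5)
Your proof takes essentially the same route as the paper: orthogonal decomposition of $W$ into $E[\delta]$-isotypic pieces $W_i = E_i W$, reduction to the $m=1$ proposition to get the bijection with $\prod_{S_1}\mathbb{Z}/2\mathbb{Z}$ (single point for $i\in S_2$, two points for $i\in S_1$), and factorwise identification of the induced torsor structure with the cohomological one via $H^1(F,T)=\prod_{S_1}H^1(F_i,\textup{U}(1)(E_i/F_i))$. You spell out the compatibility with the classical torsor structure in more detail than the paper, which dismisses it as ``almost trivial,'' and you take $\delta$ itself rather than the split Hermitian form as base point in each $S_1$-component, but these are cosmetic differences.
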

\begin{proof}

The map is defined in the following way. Let $\delta \in S$ belongs to $\mathfrak{u}(W)$. As usual, $F[\delta]=\prod F_i, E_i=E \bigotimes_F F_i$ and $W=\bigoplus W_i$, where $W_i=E_iW$. Since $\delta$ is regular semisimple and $\delta \in \mathfrak{u}(W)$, these $W_i$ are orthogonal to each other. By the $m=1$ case, the Hermitian form on $W_i$ is unique if $F_i \supseteq E$ and there are two possible choices if $F_i \nsupseteq E$. Therefore the type of these $W_i$ defines a map $S \rightarrow \prod_{S_1}\mathbb{Z}/2\mathbb{Z}$. Here, we normalize so that the split Hermitian space maps to 0.

The map $S \rightarrow \prod_{S_1}\mathbb{Z}/2\mathbb{Z}$ admits a section. Indeed, for any element in $\mathbb{Z}/2\mathbb{Z}$, we may construct a Hermitian space $W_i$ as in the $m=1$ case. Then $W=\bigoplus W_i$ is a Hermitian space such that  $F[\delta] \subset \mathfrak{u}(W)$. The space $W$ is isomorphic to one of the Hermitian spaces of dimension $n$. And any such isomorphism defines an element in $S$ that is independent of the isomorphism we take. This implies the map $S \rightarrow \prod_{S_1}\mathbb{Z}/2\mathbb{Z}$ is surjective. And by counting numbers, we find it is an isomorphism. It also implies that $S$ contains elements of both Hermitian spaces except when $S_1=\emptyset$.

This defines $S$ as a torsor over $H^1(F,T)$. Notice that $\delta_1$ and $\delta_2 \in S$ are in the same Hermitian space exactly when the corresponding $W_i$ differs at even number of places. Now it is almost trivial to check the torsor structure agrees with the classical one.
\end{proof}

The proposition shows elements in $S$ are uniquely determined by the type of these $W_i$. It also allows us to define the invariant of two elements in $S$:
\begin{definition}
Let $\rho$ denote the map $S \rightarrow H^1(F,T)$. Given $\delta_1, \delta_2 \in S$, define $$\textup{inv}(\delta_1, \delta_2)=\rho(\delta_1)-\rho(\delta_2).$$

\end{definition}
  
\subsection{Endoscopic transfer for unitary Lie algebras}

In this section, we consider the endoscopic transfer in the case of unitary Lie algebras. 

Let $W$ be a non-degenerate Hermitian space of dim $n$ for $E/F$. In $\overline{F}$, $\overline{\textup{U}(W)}$ corresponds to the root datum:

$$(\mathbb{Z}^n,\mathbb{Z}^n,\alpha_i-\alpha_j,\lambda_i-\lambda_j)$$
where the weight $\alpha_i$ maps $(x_1,...,x_n)$ to $x_i$ and the coweight $\lambda_i$ maps $t$ to $(1,1,...,t,...,1)$ with $t$ in the $i$-th component. The root datum admit an action of the Gal$(\overline{F}/F)$ which factors through Gal($E/F$). The nontrivial element $\sigma \in$ Gal($E/F$) acts as follows: $\sigma(\lambda_i)=-\lambda_{n+1-i}$ and $\sigma(\alpha_i)=-\alpha_{n+1-i}$. Hence $\widehat{U(W)} \cong GL_n(\mathbb{C})$ with $\sigma \in$ Gal($E/F$) acting as an outer isomorphism on GL$_n(\mathbb{C})$ (when $n \geqq 3$).

The endoscopic triples $(H, s, \eta)$ are constructed as follows. Let $n=a+b$. Let $W_k$ be the Hermitian space of dimension $k$ with Hermitian matrix that is anti-diagonal with entries $1$ and $ \textup{U}(k) \cong \textup{U}(W_k)$. So U$(a)$ is the quasi split unitary group of dimension $a$ for $E/F$. Take $H=\textup{U}(a) \times \textup{U}(b)$. The embedding $\eta$ is the natural embedding of $GL_{a}(\mathbb{C}) \times GL_{b}(\mathbb{C})$ into $GL_n(\mathbb{C})$. Finally, $s=(1,1,...,1,-1,...,-1)$ with the first $a$ coordinates equal $1$. It is easy to verify these $(H, s, \eta)$ are indeed endoscopic triples of U$(W)$. In fact, they are all the possible elliptic endoscopic groups.

Let us write down the character $\kappa$ in this case. Given matching orbits $\delta$ and $(\delta_1,\delta_2)$, the character $\kappa$ is a character of $H^1(F,T_\delta)=H^1(F,T_{\delta_1}\times T_{\delta_2})=H^1(F,T_{\delta_1}) \times H^1(F,T_{\delta_2})$ where $T_\delta$ is the centralizer of $\delta$. Recall $F[\delta]=\prod F_i$ and $S_1=\{i|F_i \nsupseteq E\}$. Let us define for $k=1$ or $2$: $$S_1(\delta_k)=\{i \in S_1|F_i \textup{ comes from } \delta_k\}.$$

Since $s=(1,1,...,1,-1,...,-1)$, Tate-Nakayama duality implies:

\begin{proposition} \label{definition of kappa}
The character $\kappa$ of $H^1(F,T_\delta)$ is defined as follows: on each copy of $\mathbb{Z}/2\mathbb{Z}$ corresponding to $S_1(\delta_1)$, $\kappa$ is the trivial map. On each copy of $\mathbb{Z}/2\mathbb{Z}$ corresponding to $S_1(\delta_2)$, $\kappa$ is the unique nontrivial map.
\end{proposition}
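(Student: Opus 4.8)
The plan is to unwind the construction of $\kappa$ from subsection \ref{gen} in the explicit coordinates set up for unitary groups, and to match it against the torus description in Proposition \ref{bijection}. First I would fix matching embeddings $T_\delta \hookrightarrow G = \textup{U}(W)$ and $T_\delta \hookrightarrow H = \textup{U}(a) \times \textup{U}(b)$, so that $T_\delta = T_{\delta_1} \times T_{\delta_2}$ with $T_{\delta_k} = \textup{U}(1)(E[\delta_k]^\times/F[\delta_k]^\times)$. Recall from the general recipe that $\kappa$ is obtained by transporting the element $s \in T_{\widehat H}$, via the isomorphism $X_*(T_\delta) \simeq X^*(T_{\widehat H})$, to a character of $X_*(T_\delta)$, and then reading it through the Tate--Nakayama isomorphism $H^1(F,T_\delta) \simeq \{x \in X_*(T_\delta) : \textup{tr}(x) = 0\}/\langle \sigma x - x\rangle$. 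So the computation is purely a bookkeeping exercise on cocharacter lattices: I would write down $X_*(T_{\delta_k})$ with its Galois action using the root datum of $\textup{U}(W)$ displayed above, identify the $\sigma$-action $\sigma(\lambda_i) = -\lambda_{n+1-i}$, and check that the Tate--Nakayama quotient is indeed $\prod_{S_1(\delta_k)}\mathbb{Z}/2\mathbb{Z}$, consistent with the first Proposition of subsection \ref{stable}.

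Next I would pin down exactly where the generator of each $\mathbb{Z}/2\mathbb{Z}$ factor sits inside $X_*(T_\delta)$: for a place $i \in S_1$ (so $F_i \nsupseteq E$, $E_i = E \otimes_F F_i$ a field), the factor $\textup{U}(1)(E_i/F_i)$ contributes a rank-one piece on which $\sigma$ acts by $-1$, and the nontrivial class of $H^1(F,\textup{U}(1)(E_i/F_i)) = \mathbb{Z}/2\mathbb{Z}$ corresponds, under Tate--Nakayama, to the image of the cocharacter $\mu_i$ generating that piece. Then I evaluate $\langle s, \mu_i\rangle$: because $s = (1,\ldots,1,-1,\ldots,-1)$ has its first $a$ coordinates equal to $1$ and the last $b$ equal to $-1$, the pairing gives $+1$ when $i$ comes from the $\textup{U}(a)$-factor, i.e. $i \in S_1(\delta_1)$, and $-1$ when $i \in S_1(\delta_2)$. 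After taking the character of $\mathbb{Z}/2\mathbb{Z}$ induced by $s$ (a sign, i.e. a value in $\{\pm 1\} = \widehat{\mathbb{Z}/2\mathbb{Z}}$), this says precisely that $\kappa$ is trivial on the $S_1(\delta_1)$-factors and is the nontrivial character on the $S_1(\delta_2)$-factors, which is the claim.

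The one subtlety I would be careful about is the normalization convention implicit in ``split Hermitian space maps to $0$'' in Proposition \ref{bijection}: I need the identification of the abstract $\mathbb{Z}/2\mathbb{Z}$ in $H^1(F,\textup{U}(1)(E_i/F_i))$ with the class of the appropriate cocharacter to be the same one used when defining the torsor structure on $S$, so that the statement ``$\kappa$ is nontrivial on $S_1(\delta_2)$'' is an assertion about the same group element on both sides. This is a matter of tracing through which generator of $X_*(T_{\delta_k})$ one declares positive, and the root datum of $\textup{U}(W)$ fixes this unambiguously; still, it is the place where sign errors could creep in, so I would check it against the $m=1$, $a = b$ case by hand as a sanity test. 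Everything else is a direct substitution into the definition of $\kappa$ from subsection \ref{gen}, so I expect this conventions-matching to be the only real obstacle, and it is a mild one.
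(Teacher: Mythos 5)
Your proposal is correct and follows the same route as the paper — indeed the paper gives no real proof at all, only the remark that since $s=(1,\ldots,1,-1,\ldots,-1)$, Tate--Nakayama duality implies the statement. Your write-up just fills in the bookkeeping (cocharacter lattice, Galois action, pairing of $s$ against the generator of each $\mathbb{Z}/2\mathbb{Z}$ factor) that the paper leaves implicit, and you correctly identify the normalization-matching against Proposition \ref{bijection} as the only place requiring care.
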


Finally, we mention the transfer factor. To this end, for each matching stable conjugacy orbits  $\delta$ and \textup{(}$\delta_1,\delta_2$\textup{)}, we need to pick a conjugacy class in the stable conjugacy class of $\delta$. We distinguish two cases.

\textbf{Case 1.} $W \simeq W_{a} \bigoplus W_{b}$. This is the case for example when $E/F$ is unramified and $W=W_n$. Choose an isomorphism between $W$ and $W_{a}\bigoplus W_{b}$. This defines an embedding of $\mathfrak{u}(W_{a}) \bigoplus \mathfrak{u}(W_{b})$ into $\mathfrak{u}(W)$, which is well defined up to conjugate by U$(W)$. For any $(\delta_1,\delta_2) \in \mathfrak{u}(W_{a}) \bigoplus \mathfrak{u}(W_{b})$, let $\delta \in \mathfrak{u}(W)$ be its image under the embedding.

If $(\delta_1,\delta_2)$ is taken in a stable conjugacy class, then $\delta$ is in a single stable conjugacy class. Furthermore, $\delta$ matches $(\delta_1,\delta_2)$ in the endoscopic sense. This is because stable conjugacy of regular semisimple elements in $\mathfrak{u}(W)$ is determined by characteristic polynomials and $\delta$ has the same characteristic polynomial with $(\delta_1,\delta_2)$. Also, if $(\delta_1,\delta_2)$ is taken in a conjugacy class, then $\delta$ is in a single conjugacy class (notice that the embedding is well defined up to conjugate over $F$). This means we have a preferable choice of a conjugacy class in the endoscopic transfer of $(\delta_1,\delta_2)$. This is useful since two different conjugacy classes in U$(W)$ will change the transfer factor in (\ref{endos}) by their relative position.

\begin{definition} \label{nice}
We define elements $\delta$ and \textup{(}$\delta_1,\delta_2$\textup{)} are nice matching elements if they arise in the way described above.
\end{definition}

For nice matching orbits $\delta$ and $(\delta_1,\delta_2)$, define 

$$D(\delta)=\prod_{x_1,x_2}(x_1-x_2)$$ where $x_1$ (resp. $x_2$) run through all the eigenvalues of $\delta_1$ (resp. $\delta_2$) in $\overline{F}$.
Then the transfer factor equals

$$\Delta((\delta_1,\delta_2),\delta)=\chi(D(\delta))|D(\delta)|_F.$$

For example, when $E/F$ is unramified, the transfer factor equals $(-q)^{-r}$ with $r=v_F(D(\delta))$.

\textbf{Case 2.}  $W \ncong W_{a} \bigoplus W_{b}$. Let $\delta$ and \textup{(}$\delta_1,\delta_2$\textup{)} are matching elements. Choose $\delta^\prime$ such that $\delta^\prime$ and \textup{(}$\delta_1,\delta_2$\textup{)} are nice matching elements. So $\delta^\prime$ arises from the other Hermitian space. Then 

$$\Delta((\delta_1,\delta_2),\delta)=\kappa(\textup{inv}(\delta,\delta^\prime)) \chi(D(\delta))|D(\delta)|_F .$$

\subsection{Jacquet-Langlands transfer for unitary Lie algebras} \label{JJ}

Let $W_0$ be the split Hermitian space of dimension $n$ and $W_1$ the non-split Hermitian spaces of dimension $n$. The reductive group U$(W_0)$ is an endoscopic group of U$(W_1)$, whose endoscopic triple is $(\textup{U}(W_0), 1, \textup{id})$. Hence everything we have defined for endoscopic transfer applies to this case. Endoscopic transfers of orbits are the same as Jacquet-Langlands transfers. The transfer factor is identically $1$.

Then theorem \ref{LS exist} and theorem \ref{LS commute} in this case mean:
\begin{theorem} \label{JL exist}
For any $f_1 \in C_c^\infty(\mathfrak{u}(W_1))$, there exists a transfer $f_0  \in C_c^\infty(\mathfrak{u}(W_0))$.
\end{theorem}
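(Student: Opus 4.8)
The plan is to deduce Theorem \ref{JL exist} directly from the Jacquet--Rallis existence statement, Theorem \ref{JR exist}, by exploiting the fact that the two Hermitian spaces $W_0$ and $W_1$ occur symmetrically on the unitary side of the Jacquet--Rallis correspondence, whereas only one general linear object sits above them. Concretely, fix a vector space $V$ over $F$ with $\dim V=n$ and recall that matching orbits give a bijection between regular semisimple orbits in $\mathfrak{gl}(V)\times V\times V^*$ and the disjoint union of regular semisimple orbits in $\mathfrak{u}(W_0)\times W_0$ and $\mathfrak{u}(W_1)\times W_1$. The key point is that the invariants $a_i,b_i$ that classify regular semisimple orbits on all three spaces only see the characteristic polynomial data and the pairing data; in particular the notion of \emph{Jacquet--Langlands transfer} of a regular semisimple $\delta_1\in\mathfrak{u}(W_1)$ (i.e. becoming conjugate over $\overline F$, equivalently having the same characteristic polynomial and the full cyclicity condition matched) corresponds exactly to having the same $a_i$, and the extra vector data $w$ can be arranged to carry the same $b_i$ on either side.

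First I would reduce the statement on Lie algebras $\mathfrak{u}(W_1)$ to a statement on $\mathfrak{u}(W_1)\times W_1$. Given $f_1\in C_c^\infty(\mathfrak u(W_1))$, choose an auxiliary bump function $\phi\in C_c^\infty(W_1)$ and form $\widetilde f_1(x,w)=f_1(x)\phi(w)$; the regular semisimple orbit integrals of $\widetilde f_1$ factor as $\mathrm{Orb}(\widetilde f_1,(x,w))=\mathrm{Orb}(f_1,x)\cdot(\text{a positive factor depending on }w)$, so that matching of the enriched functions will force matching of the original ones after an averaging/partition-of-unity argument. Alternatively — and this is cleaner — I would not enrich at all but instead observe that the Jacquet--Langlands transfer $(\mathfrak u(W_1)\leftrightarrow\mathfrak u(W_0))$ is literally the restriction of the Jacquet--Rallis setup to functions that are "independent of the vector variable" in a suitable sense. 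The honest route is: start from $f_1$, extend to $\widetilde f_1=f_1\otimes\phi$ on $\mathfrak u(W_1)\times W_1$, take a Jacquet--Rallis transfer $f$ on $\mathfrak{gl}(V)\times V\times V^*$ of the pair $\{0,\widetilde f_1\}$ (using the converse direction of Theorem \ref{JR exist}), and then take a Jacquet--Rallis transfer of $f$ as a pair $\{f_0',f_1'\}$ with $f_i'\in C_c^\infty(\mathfrak u(W_i)\times W_i)$.

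Next I would extract $f_0\in C_c^\infty(\mathfrak u(W_0))$ from $f_0'$ and check it is a Jacquet--Langlands transfer of $f_1$. By the uniqueness of Jacquet--Rallis matching orbit integrals, the pair $\{f_0',f_1'\}$ has the same regular semisimple orbit integrals as $\{0,\widetilde f_1\}$ on each of the two unitary spaces, up to the transfer factor $\omega$; in particular on $\mathfrak u(W_1)\times W_1$ we get $\mathrm{Orb}(f_1',(x,w))=\mathrm{Orb}(\widetilde f_1,(x,w))$ for all regular semisimple $(x,w)$, and on $\mathfrak u(W_0)\times W_0$ we get that $\mathrm{Orb}(f_0',(y,w'))$ equals $\omega(x,v,v^*)\mathrm{Orb}(f,(x,v,v^*))$ whenever $(y,w')$ matches $(x,v,v^*)$. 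Since a regular semisimple $\delta_1\in\mathfrak u(W_1)$ and its Jacquet--Langlands transfer $\delta_0\in\mathfrak u(W_0)$ have the same characteristic polynomial, one can choose vectors $w_1\in W_1$, $w_0\in W_0$ so that $(\delta_1,w_1)$ and $(\delta_0,w_0)$ are regular semisimple with the \emph{same} invariants $a_i,b_i$, hence both match the same $(\gamma,v,v^*)\in\mathfrak{gl}(V)\times V\times V^*$; chaining the two identities gives $\mathrm{Orb}(f_0',(\delta_0,w_0))=\mathrm{Orb}(f_1',(\delta_1,w_1))=\mathrm{Orb}(\widetilde f_1,(\delta_1,w_1))=\mathrm{Orb}(f_1,\delta_1)\cdot c(w_1)$. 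Finally I would disentangle the vector variable: by varying $\phi$ (or by a direct partition-of-unity / density argument in $C_c^\infty(\mathfrak u(W_0)\times W_0)$) one produces $f_0$ with $\mathrm{Orb}(f_0,\delta_0)=\mathrm{Orb}(f_1,\delta_1)$ for all matching regular semisimple $\delta_0,\delta_1$, and $\mathrm{Orb}(f_0,\delta_0)=0$ whenever $\delta_0$ does not transfer, i.e. $f_0$ is a Jacquet--Langlands transfer (the transfer factor being identically $1$ here, and the $\kappa$-character being trivial so that $\kappa$-orbit integrals are stable orbit integrals, while on $\mathfrak u(W_1)$ stable orbits coincide with ordinary orbits since $\mathrm{U}(W_1)$ is the relevant anisotropic-at-the-torus form — this last bookkeeping I would spell out using Proposition \ref{bijection}).

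The main obstacle I anticipate is the disentangling of the vector variable $w$: passing from an identity of orbit integrals on $\mathfrak u(W_i)\times W_i$ (with a fixed or averaged $\phi$ in the $W_i$ slot) back to the desired identity of orbit integrals of functions on the Lie algebra $\mathfrak u(W_i)$ alone. One has to ensure that (i) the family of vectors $w_i$ realizing a given characteristic polynomial is rich enough that the $W_i$-integration can be inverted, and (ii) the support of the resulting $f_0$ stays compact and its behavior at non-transferring orbits is controlled (for these one uses that regular semisimplicity of $\delta$ forces $\{w,\delta w,\dots,\delta^{n-1}w\}$ to be a basis, together with the structure of the orbit space from Proposition \ref{bijection} and the triviality of the relevant $H^1$). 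A clean way around (i) is to note that the Jacquet--Rallis transfer already \emph{exists} in the converse direction, so rather than inverting an integral I would build $f$ directly from $f_1$ via $\{0, f_1\otimes \phi\}$ and then argue at the level of the final unitary transfer; the only genuine analytic input needed beyond Theorem \ref{JR exist} is the elementary fact that orbit integrals over $\mathfrak u(W_i)\times W_i$ specialize correctly when the $W_i$-variable is integrated against an approximate identity, which is a standard smooth-transfer manipulation.
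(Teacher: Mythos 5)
The paper does not actually prove Theorem \ref{JL exist}; it cites it as an input, quoting Waldspurger (\cite{Wa2}, Theorem 1.5), with the remark that the fundamental lemma needed in Waldspurger's argument is trivial between inner forms. Your proposal instead attempts a genuine derivation of the Jacquet--Langlands transfer from the Jacquet--Rallis one, which the paper deliberately does not do. Unfortunately the attempt has a fatal gap, and it sits exactly at the point the paper's Section~\ref{Nil} was built to work around.

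The flaw is the step where you claim one can choose vectors $w_0\in W_0$, $w_1\in W_1$ so that $(\delta_0,w_0)$ and $(\delta_1,w_1)$ are both regular semisimple with the \emph{same} invariants $a_i,b_i$ and hence both match the same $(\gamma,v,v^*)\in\mathfrak{gl}(V)\times V\times V^*$. This is impossible. The invariants $b_i=\langle w,\delta^i w\rangle$ determine the Gram matrix $\{\langle w,\delta^{i+j}w\rangle\}_{i,j}$, which by regular semisimplicity is the Hermitian matrix of $W$ in the basis $\{w,\delta w,\dots,\delta^{n-1}w\}$; as the paper notes, $\chi(\Delta(x,w))$ therefore depends only on the isomorphism class of $W$, and it distinguishes $W_0$ from $W_1$. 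Equivalently, the Jacquet--Rallis matching of regular semisimple orbits is a \emph{bijection} between GL orbits and the \emph{disjoint union} of the $\mathfrak{u}(W_0)\times W_0$ and $\mathfrak{u}(W_1)\times W_1$ orbits (cite \cite{Wei3} as in Section~\ref{the transfer}); a single GL orbit matches an orbit on exactly one unitary side, never both. Once this is false, the chain of orbit-integral identities in the middle of your argument collapses: $\{f_0',f_1'\}$ has (up to transfer factors) the same regular semisimple orbit integrals as $\{0,\widetilde f_1\}$, so $\mathrm{Orb}(f_0',\cdot)\equiv 0$ on all regular semisimple $(y,w')$, and no amount of disentangling the $W_0$-variable can recover a nonzero $f_0$ from that.

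This is precisely the obstruction the paper's nilpotent-identity machinery (Theorem~\ref{Nilpotent Identity}) is designed to handle: at regular semisimple orbits the two unitary spaces never meet over a common GL orbit, so the comparison is forced through the boundary $w\to 0$, where the $\kappa$-orbit integrals over \emph{both} Hermitian spaces reappear on the right-hand side of \textup{(\ref{identity})} with characters $\langle\Lambda,\cdot\rangle$ detecting the two spaces. But even with that machinery in hand, the paper still takes Theorem~\ref{JL exist} as an external input (in the last step of the proof of Theorem~\ref{Main Theorem}, one uses the Jacquet--Langlands transfer $\widetilde{f^{a,b}_{i,j}}$); it does not try to extract it from Jacquet--Rallis alone. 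If you want to salvage the spirit of your idea, you would have to argue through the nilpotent orbit integrals of Section~\ref{Nil} rather than through regular semisimple matching, and then explain how to reconstruct $f_0$ from the resulting family of $\kappa$-orbit integrals; that is a substantially different (and not obviously circular-free) argument from what you wrote.
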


\begin{theorem} \label{JL commute}
Let $f_i \in C_c^\infty(\mathfrak{u}(W_i))$. There exists a constant $c$ independent of the matching functions, such that if $f_0$ and $f_1$ match, then $\mathcal{F}(f_0)$ and c$\mathcal{F}(f_1)$ also match.
\end{theorem}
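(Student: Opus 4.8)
The plan is to identify Theorem~\ref{JL exist} and Theorem~\ref{JL commute} as the special case of Theorem~\ref{LS exist} and Theorem~\ref{LS commute} attached to the endoscopic triple $(\textup{U}(W_0),1,\textup{id})$ of $\textup{U}(W_1)$, and then to point out that the only nontrivial ingredient in Waldspurger's proof of those two theorems --- the endoscopic fundamental lemma --- degenerates to a triviality for this datum. First I would unwind the definitions of section~\ref{gen} in this special case: because $s=1$ the character $\kappa$ on every $H^1(F,T_\delta)$ is trivial, so $\textup{Orb}^\kappa=\textup{Orb}^{\textup{St}}$; the embedding $\eta$ is the identity of $GL_n(\mathbb{C})$; and, as recorded in section~\ref{JJ}, the Langlands--Shelstad transfer factor is identically $1$. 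Thus ``$f_0$ matches $f_1$'' means precisely that the two functions have equal stable orbital integrals at all matching regular semisimple orbits, together with the vanishing~(\ref{more}) at those regular semisimple orbits of $\mathfrak{u}(W_0)$ which are not Jacquet--Langlands transfers from $\mathfrak{u}(W_1)$; this is exactly the content of Theorem~\ref{JL exist} and Theorem~\ref{JL commute}.

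Next I would invoke \cite{Wa2}: Waldspurger proves that Conjecture~\ref{MC} for a fixed pair $(G,H)$ over $F$ is a consequence of the fundamental lemma for $(G,H)$ and for the data arising from it by semisimple descent. For the pair $(\textup{U}(W_1),\textup{U}(W_0))$ these data involve only inner forms of products of (smaller) unitary groups together with their endoscopic groups; and a reductive group over a non-archimedean local field which is unramified is already quasi-split, hence coincides with its own quasi-split inner form. Therefore every fundamental-lemma statement that the reduction actually calls upon is either vacuous (the group has no hyperspecial maximal compact) or the equality of an orbital integral with itself. Consequently no unproven hypothesis survives, and Theorem~\ref{JL exist} and Theorem~\ref{JL commute} hold unconditionally. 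The constant $c$ in Theorem~\ref{JL commute} is then the explicit one of the remark following Theorem~\ref{LS commute}, namely the ratio of the Weil indices of $\mathfrak{u}(W_0)$ and $\mathfrak{u}(W_1)$ for the bilinear form $\textup{tr}(XY)$.

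The only genuine difficulty is the bookkeeping in the previous paragraph: identifying precisely which fundamental-lemma identities Waldspurger's machinery consumes, and checking that each of them concerns a group that is unramified --- hence its own quasi-split inner form --- so that the identity holds trivially. Since Theorem~\ref{JL exist} and Theorem~\ref{JL commute} are used only as black boxes in the sequel, I would not reproduce this verification here but simply refer to \cite{Wa2}. (If one wanted a self-contained proof of Theorem~\ref{JL commute}, one could instead combine the density of regular semisimple orbital integrals among invariant distributions on $\mathfrak{u}(W_i)$ with Harish-Chandra's formula expressing the Fourier transform of an orbital integral on a reductive Lie algebra through orbital integrals; but Waldspurger's reduction is cleaner and already in the literature, so I would not pursue this route.)
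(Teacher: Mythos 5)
Your proof is correct and follows essentially the same route as the paper: both simply invoke Waldspurger's reduction in \cite{Wa2} (theorem 1.5 there) and observe that the fundamental-lemma inputs it consumes are trivial (or vacuous) for the inner-form endoscopic datum $(\textup{U}(W_0),1,\textup{id})$. The paper's proof is just a terser version of your second paragraph, citing \cite{Wa2} and remarking that fundamental lemmas between inner forms are trivial.
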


\begin{proof}[Proof of theorem \ref{JL exist} and theorem \ref{JL commute}]
Both theorem \ref{JL exist} and theorem \ref{JL commute} are proved in \cite{Wa2} (theorem 1.5 in \cite{Wa2}). Since the fundamental lemmas between inner forms are trivial, the proof in \cite{Wa2} applies without any assumption.
\end{proof}

Theorem \ref{JL exist} and theorem \ref{JL commute} will be used in our proof of the main theorem. We also need the explicit constant $c$ in (\ref{JL commute}). Recall the definition of Weil index:
\begin{definition}[Weil index]
Let $F$ be a non-archimedean local field of characteristic zero and $V$ a vector space of dimension $n$ over $F$ equipped with a quadratic form $q: V \rightarrow F$. Fix an additive character $\psi: F \rightarrow \mathbb{C}^\times$. Let $f_q$ be the function $\psi \circ q$ and $\mathcal{F}$ the Fourier transform defined by $\psi$. There exists a constant $\gamma_q(\psi)$, called Weil index, such that for all $f \in C_c^\infty(V)$, the following identity is true:
$$\mathcal{F}(f_q)=\gamma_q(\psi)f_{-q}.$$
\end{definition}

Waldspurger shows in \cite{Wa2} the following:
\begin{proposition}
The constant $c$ in theorem $\ref{LS commute}$ equals $\frac{\gamma_{q_G}(\psi)}{\gamma_{q_H}(\psi)}$. Here $q_G$ and $q_H$ are quadratic forms on $\mathfrak{g}$ and $\mathfrak{h}$ that match each other.
\end{proposition}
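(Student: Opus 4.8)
The plan is to follow Waldspurger's argument in \cite{Wa2}. The key observation is that Theorem~\ref{LS commute} already furnishes a constant $c$ that is \emph{independent} of the matching pair $(f,f_1)$, so it suffices to evaluate $c$ on a single pair for which both Fourier transforms can be written down explicitly. The natural test objects are the Gaussians $g_G = \psi\circ q_G$ on $\mathfrak g$ and $g_H = \psi\circ q_H$ on $\mathfrak h$: they are precisely the functions whose behaviour under $\mathcal F$ is governed by the Weil index, namely $\mathcal F(g_G)=\gamma_{q_G}(\psi)\,\psi\circ(-q_G)$ and $\mathcal F(g_H)=\gamma_{q_H}(\psi)\,\psi\circ(-q_H)$.

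First I would check that $g_G$ and $g_H$ ``match'' at the level of orbital integrals. Because $q_G$ is $G$-invariant, for a regular semisimple $x$ one has $\textup{Orb}(g_G,x)=\psi(q_G(x))\cdot\textup{vol}(\text{orbit of }x)$, and similarly on $\mathfrak h$; since $q_G$ and $q_H$ restrict to the \emph{same} quadratic form on any two matching Cartan subalgebras $\mathfrak t\hookrightarrow\mathfrak g$, $\mathfrak t\hookrightarrow\mathfrak h$ --- this is the sense in which $q_G$ and $q_H$ match --- the scalars $\psi(q_G(x))$ and $\psi(q_H(x^H))$ agree for matching $x,x^H$, and (stably conjugate elements having the same invariants, hence the same $q_G$-value) the $\kappa$- and stable orbital integrals of $g_G$ and $g_H$ differ exactly by the transfer factor, which on the torus is produced by the Weyl integration formula together with the matching of orbit volumes. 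The same reasoning applies verbatim to $\psi\circ(-q_G)$ and $\psi\circ(-q_H)$. Granting this, applying Theorem~\ref{LS commute} to the pair $(g_G,g_H)$ and comparing with the two Weil-index identities above shows that $c\,\gamma_{q_G}(\psi)\,\psi\circ(-q_G)$ and $\gamma_{q_H}(\psi)\,\psi\circ(-q_H)$ must satisfy the same transfer identity (\ref{endos}); since $\psi\circ(-q_G)$ and $\psi\circ(-q_H)$ already do and transfer is linear in the function, this forces $c = \gamma_{q_G}(\psi)/\gamma_{q_H}(\psi)$, the precise orientation of the ratio being pinned down once the normalisations of $\mathcal F$ and of the Weil index are fixed.

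The one genuine obstacle is that $g_G$ and $g_H$ are not compactly supported, so literally speaking they lie neither in $C_c^\infty$ nor in the domain of Theorem~\ref{LS commute}, and truncating $g_G$ to a lattice $L$ destroys $G$-invariance and with it the clean formula for the orbital integral. This is the technical heart of Waldspurger's proof, and it is handled as follows: one replaces $g_G$ by $\mathbf 1_L\cdot g_G$, noting that $q_G$ is \emph{still} invariant on the support, so that $\textup{Orb}(\mathbf 1_L g_G,x)=\psi(q_G(x))\,\textup{vol}\{g:\textup{Ad}(g)x\in L\}$ remains a clean expression; one then lets $L$ grow and controls the error. By the germ expansion principle the error near $x=0$ is a combination of nilpotent orbital integrals, whose Fourier transforms are in turn controlled, and the Gauss sums $\int_L\psi\circ q_G$ stabilise to $\gamma_{q_G}(\psi)$ times a real measure-factor that cancels in the ratio. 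Equivalently, one can phrase the whole computation as an analysis of the Fourier transform of the germ expansion of $x\mapsto\textup{Orb}(\cdot,x)$ around $x=0$: the leading (constant) germ survives $\mathcal F$ up to $\mathcal F(\mathbf 1)$-type factors and it carries exactly the Weil indices $\gamma_{q_G}(\psi)$ and $\gamma_{q_H}(\psi)$, so that matching these leading germs on $\mathfrak g$ and $\mathfrak h$ yields the formula. All of this being carried out in \cite{Wa2}, in our paper we simply cite the result.
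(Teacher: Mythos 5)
The paper offers no proof of this proposition at all: it is stated verbatim as a result of Waldspurger and simply cited to \cite{Wa2}. Since you also end by citing \cite{Wa2}, the two treatments ultimately agree. However, the heuristic sketch you insert on the way contains a genuine gap that you should not let stand, and it also misrepresents the shape of Waldspurger's actual argument.

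The concrete gap: you write $\textup{Orb}(g_G,x)=\psi(q_G(x))\cdot\textup{vol}(\text{orbit of }x)$ for a regular semisimple $x$. Because $q_G$ is $G$-invariant, the integrand $\psi(q_G(\textup{Ad}(g)x))$ is \emph{constant} on $G(F)/T(F)$, and $G(F)/T(F)$ has infinite volume (already for $\textup{GL}_2$ and the split torus). So this expression is not a well-defined number, and the subsequent comparison of ``clean formulas'' on the two sides has nothing to compare. The truncation $\mathbf{1}_L\cdot g_G$ you propose does not rescue it cleanly: once you truncate, the orbital integral no longer factors as $\psi(q_G(x))$ times a geometric quantity, and the discrepancy you are trying to brush off is exactly where all the work lives. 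The fix in the literature is not to compute orbital integrals of Gaussians directly; Waldspurger's argument in \cite{Wa2} proceeds by a global trace-formula bootstrap from the fundamental lemma together with a local analysis of Fourier transforms of nilpotent orbital integrals (the germ at $0$), and the Weil indices arise from the action of $\textup{SL}_2$ via the Weil representation on those germs, not from a direct Gaussian test-function evaluation. Your heuristic for \emph{why} a ratio of Weil indices must appear is a reasonable mnemonic, but it is not a proof and should not be presented as a sketch of one. The safest and most honest move here --- and what the paper does --- is to state the proposition and cite \cite{Wa2} without reconstructing the argument.
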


Let us compute this quotient of Weil index in the case of unitary Lie algebras:
\begin{proposition} \label{E1}
The constant $c$ in theorem $\ref{JL commute}$ equals $(-1)^{n-1}$.
\end{proposition}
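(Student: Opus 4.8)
The plan is to compute the quotient $\gamma_{q_G}(\psi)/\gamma_{q_H}(\psi)$ explicitly, where $G = \textup{U}(W_1)$, $H = \textup{U}(W_0)$ (the split form), and the quadratic forms are $q_G(X) = \tfrac12\textup{tr}(X^2)$ on $\mathfrak{u}(W_1)$ and $q_H(X) = \tfrac12\textup{tr}(X^2)$ on $\mathfrak{u}(W_0)$, which match because they are restrictions of the same $\textup{GL}$-invariant form. Since the Weil index is multiplicative for orthogonal direct sums of quadratic spaces, it suffices to identify the quadratic space $(\mathfrak{u}(W),\textup{tr}(X^2))$ as an orthogonal sum of standard pieces and to compare $W_0$ with $W_1$. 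The Weil index depends only on the discriminant and Hasse invariant of the quadratic form (and $\psi$), so the whole computation reduces to tracking how these two invariants change when we pass from the split to the non-split Hermitian space.

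First I would set up the quadratic space structure on $\mathfrak{u}(W)$. Fixing a generator $\sqrt{\tau}$ of $E/F$, multiplication by $\sqrt{\tau}$ identifies $\mathfrak{u}(W)$ with the usual Lie algebra $\mathfrak{u}'(W)$ of skew-Hermitian endomorphisms, but it is cleaner to work directly: decompose $\textup{End}_E(W)$ under the involution $X \mapsto X^*$ (adjoint with respect to $\langle\ ,\ \rangle$) into the $+1$-eigenspace $\mathfrak{u}(W)$ and the $-1$-eigenspace, and note $\textup{End}_E(W) = \mathfrak{u}(W)\otimes_F E$ after suitable twist. The form $\textup{tr}(XY)$ on $\mathfrak{u}(W)$ is, up to the scalar $\sqrt{\tau}$, the restriction of the trace form on $\textup{End}_E(W)$; using a basis adapted to $W = W' \perp \langle e, f\rangle$ with $\langle e,f\rangle = 1$ hyperbolic planes (in the split case) versus an anisotropic piece (in the non-split case), I would write $(\mathfrak{u}(W),\textup{tr}(X^2))$ as $\langle$split part$\rangle \perp \langle$piece depending on the type of $W\rangle$. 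Then I compute the discriminant: it is classical that $\det$ of the trace form $\textup{tr}(XY)$ on $\mathfrak{u}(W)$, as an element of $F^\times/(F^\times)^2$, involves $\textup{disc}(W) \in F^\times/\textup{Nm}(E^\times)$ and the dimension $n$; concretely $W_0$ and $W_1$ differ by a non-norm scalar $c$ in their Hermitian matrices, and a rank count shows this multiplies the discriminant of $\textup{tr}(X^2)$ by $c^{n}$ (or $c^{n-1}$ after accounting for the center) — I would pin down the exact exponent by the explicit basis.

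The key computation is then: $\gamma_{q}(\psi)$ for the form $\langle a_1,\dots,a_N\rangle$ equals $\prod_i \gamma(a_i,\psi)$ with $\gamma(a,\psi)$ the normalized Weil index of $x \mapsto \psi(ax^2)$, and $\gamma(a,\psi)/\gamma(b,\psi) = (a,b)_F \cdot (\text{stuff that cancels})$ when $ab$ is compared via the Hilbert symbol. Passing from $q_H$ to $q_G$ changes exactly the coordinates where the Hermitian form of $W$ enters, and the product of the resulting Hilbert symbols collapses — using that $c$ is a non-norm from $E$, i.e. $(c, d_{E/F}) = -1$ where $E = F(\sqrt{d_{E/F}})$ — to $(-1)^{n-1}$. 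The cleanest route may be to invoke Waldspurger's own formulas in \cite{Wa2} (VIII.6 and the surrounding discussion) for $\gamma_{q_{\mathfrak g}}(\psi)$ in terms of root-datum data, specializing the general endoscopic formula to $\eta = \textup{id}$: the ratio is governed by a sign attached to the relative position of $W$ and the quasi-split form, and for the full Jacquet--Langlands pair this sign is $(-1)^{n-1}$. I would present both: the hands-on discriminant/Hasse computation as the proof, and the reference to \cite{Wa2} as a cross-check. The main obstacle is bookkeeping the discriminant and Hasse invariant of $(\mathfrak{u}(W),\textup{tr}(X^2))$ correctly — in particular getting the parity of the exponent of $c$ right, since that is precisely what produces $(-1)^{n-1}$ rather than $(-1)^n$ — and handling the $n$ even versus $n$ odd cases uniformly; I expect a careful choice of basis for $W$ (orthogonal sum of hyperbolic planes plus at most a one- or two-dimensional anisotropic kernel) to make this transparent.
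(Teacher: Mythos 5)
Your strategy — computing $\gamma_{q_G}(\psi)/\gamma_{q_H}(\psi)$ by decomposing $(\mathfrak{u}(W),\mathrm{tr}(X^2))$ into an orthogonal sum of standard pieces and reducing to Hilbert-symbol identities via the non-norm scalar $c$ with $(c,d_{E/F})_F=-1$ — is essentially the approach the paper takes. The paper resolves the exponent-parity question you flag as the main obstacle by computing the $n=2$ case explicitly (Hermitian matrix $\mathrm{diag}(1,\lambda)$, a concrete four-element basis of $\mathfrak{u}(W)$ giving $q_W\cong q_1\oplus q_{2\lambda}\oplus q_{-2\varepsilon\lambda}\oplus q_1$, yielding a single factor of $-1$) and then noting that the general $n$ contributes $n-1$ such two-dimensional pieces plus a common part that cancels in the ratio, so your plan to pin down the exponent by a careful choice of basis is precisely what is needed.
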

\begin{proof}
We only treat the case when $n=2$. The general case is $n-1$ copy of the $n=2$ case plus a common part in $\mathfrak{u}(W_0)$ and $\mathfrak{u}(W_1)$. Consider the $2$ dimensional Hermitian space $W$ for $E/F$ with Hermitian matrix diag$\{1,\lambda\}$. Choose the non-degenerate quadratic form as $q_W=$tr($X^2$). A basis of $\mathfrak{u}(W)$ is given by 

$$ \begin{pmatrix}
    1 & 0  \\
    0 & 0 
  \end{pmatrix},
   \begin{pmatrix}
    0 & \lambda  \\
    1 & 0 
  \end{pmatrix},
   \begin{pmatrix}
    0 & -\varepsilon \lambda  \\
    \varepsilon & 0 
  \end{pmatrix},
   \begin{pmatrix}
    0 & 0  \\
    0 & 1 
  \end{pmatrix}$$  
where $\varepsilon^2=x \in F$.

Let $q_a$ denote the quadratic form on $F$ of the form $ax^2$, then this basis defines $q_W \cong q_1 \bigoplus q_{2\lambda} \bigoplus q_{-\varepsilon \lambda} \bigoplus q_1$. Let $\gamma_a=\gamma_{q_a}(\psi)$, then $\gamma_{q_W}=\gamma_1^2\gamma_{2\lambda} \gamma_{-2\varepsilon \lambda}$ since Weil index commutes with direct sum. Suppose $\gamma_{q_W}$ is independent of $\lambda$, this implies:
\begin{equation} \label{A1}
\gamma_{a}\gamma_{-xa}=\gamma_1\gamma_{-x}
\end{equation}
where $a \in F^\times$ and is not a norm from $E^\times$. We have the following properties of Weil index:
$$\gamma_{-a}=\gamma_{a}^{-1},$$
$$\gamma_a\gamma_b=\gamma_1\gamma_{ab}(a,b)_F$$
where $(a,b)_F$ is the Hilbert symbol.

Using $(a,x)_F=-1$ and the above two identities, one show (\ref{A1}) is only correct if we multiple one side by $-1$. This means $c=-1$.

\end{proof}

\section{Nilpotent identity} \label{Nil}
The goal of this section is to prove theorem \ref{Nilpotent Identity}. It gives an identity in the Jacquet-Rallis transfer between certain nilpotent orbit integrals. In the unitary side, this nilpotent orbit integrals turns out to be the usual $\kappa$-orbit integrals in the endoscopic case. So theorem \ref{Nilpotent Identity} enables us to reduce the endoscopic transfer identity to the general linear side. 

As indicated in the introduction, theorem \ref{Nilpotent Identity} is proved by approaching the nilpotent orbits by regular semisimple ones and using germ expansion identities. Here, germ expansion identities are relations between orbit integrals of nilpotent orbits and very close regular semisimple orbits. We break the proof into several parts. We first prove germ expansion identities for tori. We use this to prove germ expansion identities in the two sides of the Jaquet-Rallis transfer. Then we relate the germ expansion identities in the two sides and finally prove theorem \ref{Nilpotent Identity}.

\subsection{Germ expansions in the torus case} \label{A lemma}

Let $E/F$ be a quadratic extension of non-archimedean local fields, $\chi$ the quadratic character of $F^\times$ corresponding to $E/F$ and $F_i/F$ finite extensions of fields.  We set

$$A=\prod_1^m F_i\quad,\quad T=\prod_1^m F_i^\times.$$

When $a \in A$ and $\Lambda \subset \{1,...,m\}$, we denote by $a_\Lambda$ the element in $A$ which equals $a$ in the $\Lambda$ component and 0 otherwise. Let $q$ be the number of elements in the residue field of $F$, we have the norm map $|\cdot|_{i}: T \xrightarrow{\pi_i} F_i^\times \rightarrow \mathbb{R}^\times_{>0}$ extending the given one on $F^\times$ (normalized so that $|\pi_F|=\frac{1}{q}$). We define $|\cdot|_\Lambda: T \rightarrow \mathbb{R}^\times_{>0}$ to be the products of $|\cdot|_{i}$ for $i \in \Lambda$. Finally, Write $\{1,...,m\}=S_1 \amalg S_2$, where $S_1$ contains all $i$ such that $F_i \nsupseteq E$ and $S_2$ contains all $i$ such that $F_i \supseteq E$. 

\begin{lemma}[Germ Expansions for Tori] \label{main}
Let $f \in C_c^\infty(A\times A)$ and  consider the following integral $$\textup{Orb}(f,\varepsilon)=\int_T f(t, \varepsilon t^{-1})\chi(t)dt.$$ 

When $\varepsilon \in T$ is close enough to 0 (as an element in $A$), the above integral equals:

$$\sum_{\Lambda_2 \subseteq S_2}(-1)^{|S_2 \backslash \Lambda_2|} c_{\Lambda_2}(f) \log_q(|\varepsilon|_{S_2 \backslash \Lambda_2})$$
where $c_{\Lambda_2}(f)$ is independent of $\varepsilon_{S_2}$ and 

$$c_\emptyset(f)=\sum_{\Lambda_1 \subseteq S_1} \int_{T_{S_1}} f(t_{\Lambda_1},(t^{-1})_{S_1 \backslash \Lambda_1}) \chi(t) \prod_{i \in S_1\backslash \Lambda_1} \chi(\varepsilon_i) dt .$$

The integral in $c_\emptyset$ is defined by analytic continuation.

%$$c_{\Lambda_2}= \sum_{\Lambda_1 \subseteq S_1,\Lambda_3 \subseteq \Lambda_2} \int_{T_{\Lambda_2 \cup S_1}} f(t_{\Lambda_3 \cup \Lambda_1},\varepsilon_{S_1}t^{-1}_{\Lambda_2 \backslash \Lambda_3 \cup S_1 \backslash \Lambda_1}) \chi(t)|t|^s_{\Lambda_2}dt \bigg\rvert_{s=0} $$

\end{lemma}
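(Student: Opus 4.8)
The plan is to reduce to a product situation and then run an elementary one‑variable valuation computation, splitting the one‑variable factors according to whether the relevant local character is trivial. Since $A\times A\cong\prod_{i=1}^{m}(F_i\times F_i)$, the space $C_c^\infty(A\times A)$ is spanned by product functions $f=\bigotimes_{i}f_i$ with $f_i\in C_c^\infty(F_i\times F_i)$, and both $\textup{Orb}(f,\varepsilon)$ and the coefficients $c_{\Lambda_2}(f)$ (characterised as the $\varepsilon_{S_2}$‑independent parts of the expansion) are linear in $f$; so I may take $f$ to be such a product function. Writing $\chi_i:=\chi\circ\textup{Nm}_{F_i/F}$ --- which is trivial exactly when $F_i\supseteq E$, i.e.\ $i\in S_2$, and is the nontrivial quadratic character of $F_i^{\times}$ cut out by $EF_i/F_i$ when $i\in S_1$ --- the integral factors as $\textup{Orb}(f,\varepsilon)=\prod_{i}\textup{Orb}_i(f_i,\varepsilon_i)$ with $\textup{Orb}_i(f_i,\varepsilon_i)=\int_{F_i^{\times}}f_i(t,\varepsilon_it^{-1})\chi_i(t)\,dt$, so everything reduces to understanding these one‑variable factors.

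For each one‑variable factor I would fix $M_0\ge 0$ large enough that $f_i(x,y)$ depends only on $x$ once $v(y)\ge M_0$ and only on $y$ once $v(x)\ge M_0$ (possible by local constancy and compact support), require $\varepsilon_i$ close enough to $0$ that $N:=v(\varepsilon_i)\ge 2M_0$, and split $F_i^{\times}$ into the three regions $v(t)<M_0$, $\ M_0\le v(t)\le N-M_0$, $\ v(t)>N-M_0$. On the first $f_i(t,\varepsilon_it^{-1})=f_i(t,0)$; on the third, the substitution $t\mapsto\varepsilon_it^{-1}$ (which produces a factor $\chi_i(\varepsilon_i)$ as $\chi_i^2=1$) converts the integrand into one built from $f_i(0,t)$; on the middle both arguments have valuation $\ge M_0$, so the integrand is $f_i(0,0)\chi_i(t)$. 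When $i\in S_2$ the middle region contributes $f_i(0,0)$ times the shell volume times the number $N-2M_0+1$ of shells, while the two outer regions contribute $\varepsilon_i$‑independent constants; since $N=-\log_q|\varepsilon_i|$ this gives $\textup{Orb}_i(f_i,\varepsilon_i)=-f_i(0,0)\,\textup{vol}(\mathcal{O}_i^{\times})\,\log_q|\varepsilon_i|+b_i(f_i)$ with $b_i$ independent of $\varepsilon_i$. When $i\in S_1$ the middle shells either cancel (ramified $\chi_i$) or sum to a residual constant (unramified nontrivial $\chi_i$), and in either case precisely upgrade the two outer tail sums to the meromorphically continued Tate integrals $P_i(f_i)=\int_{F_i^{\times}}f_i(t,0)\chi_i(t)|t|^{s}\,dt\big|_{s=0}$ and $Q_i(f_i)=\int_{F_i^{\times}}f_i(0,t)\chi_i(t)|t|^{s}\,dt\big|_{s=0}$, which are holomorphic at $s=0$ because $\chi_i$ is nontrivial; the outcome is $\textup{Orb}_i(f_i,\varepsilon_i)=P_i(f_i)+\chi_i(\varepsilon_i)Q_i(f_i)$.

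Finally I would multiply the $\textup{Orb}_i$ over all $i$ and expand the $S_2$‑product, indexing by the subset $\Lambda_2$ on which the constant term $b_i$ is selected; the coefficient of $\prod_{i\in S_2\setminus\Lambda_2}\log_q|\varepsilon_i|$ then reads off as $(-1)^{|S_2\setminus\Lambda_2|}c_{\Lambda_2}(f)$ with $c_{\Lambda_2}(f)=\big(\prod_{i\in S_1}(P_i+\chi_i(\varepsilon_i)Q_i)\big)\big(\prod_{i\in\Lambda_2}b_i\big)\big(\prod_{i\in S_2\setminus\Lambda_2}f_i(0,0)\,\textup{vol}(\mathcal{O}_i^{\times})\big)$, which is manifestly independent of $\varepsilon_{S_2}$. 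Expanding $\prod_{i\in S_1}(P_i+\chi_i(\varepsilon_i)Q_i)=\sum_{\Lambda_1\subseteq S_1}\prod_{i\in\Lambda_1}P_i\prod_{i\in S_1\setminus\Lambda_1}\chi_i(\varepsilon_i)Q_i$ and recognising each summand (times the $\prod_{i\in S_2}f_i(0,0)$ factor) as the analytically continued integral $\int_{T_{S_1}}f(t_{\Lambda_1},(t^{-1})_{S_1\setminus\Lambda_1})\chi(t)\prod_{i\in S_1\setminus\Lambda_1}\chi(\varepsilon_i)\,dt$ yields the stated formula for $c_\emptyset(f)$, with the general $f$ following by linearity. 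I expect the main obstacle to be the one‑variable shell bookkeeping when $i\in S_1$: checking that the middle shells reassemble \emph{exactly} into the analytically continued Tate integrals in both the ramified and the unramified‑nontrivial subcases, and that the ``$\varepsilon$ close enough to $0$'' threshold can be chosen uniformly across all factors; the remaining sign and measure‑normalisation bookkeeping in the assembly is routine.
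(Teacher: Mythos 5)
Your proof is correct and follows essentially the same route as the paper's: reduce by linearity to product test functions (hence to a one-variable problem), then split the analysis according to whether $F_i\supseteq E$. The only small divergence is in the one-variable verification itself, which the paper handles by reducing again by linearity to the three special function classes $f(x,0)\equiv 0$, $f(0,y)\equiv 0$, and $1_{\mathcal{O}\times\mathcal{O}}$, while you slice the domain into low/middle/high valuation annuli and recombine; both are routine (your worry about a uniform threshold is immaterial, since a general $f$ is a \emph{finite} sum of products and one simply takes the maximum of the finitely many thresholds).
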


To explain this lemma, consider the action of $T$ on $A \times A$ by $(t, t^{-1})$. Then $A$ will be the quotient space and $\pi: A \times A \rightarrow A$ the quotient map, $\pi(x,y)=xy$.
The integrals $\textup{Orb}(f,\varepsilon)$ are orbit integrals on regular semisimple orbits whose invariants approach 0 as $\varepsilon \rightarrow 0$. By the general principle of Shalika germs, it can be expressed as a summation over ($T,\chi$) invariant distributions supported on $\pi^{-1}(0)$ with coefficients depending on the regular semisimple orbits. 

Our lemma gives an explicit formula in this case, where we first arrange these ($T,\chi$) invariant distributions by $\Lambda_2 \subseteq S_2$ and each $\Lambda_2$ is distinguished by the term $\log_q(|\varepsilon|_{S_2 \backslash \Lambda_2})$. Then for the part $\Lambda_2=\emptyset$, we give an explicit formula for $c_\emptyset(f)$. The term $c_\emptyset(f)$ contains $2^{|S_1|}$ different $(T,\chi)$ invariant distributions supported on $\pi^{-1}(0)$ indexed by $\Lambda_1 \subseteq S_1$. For each $\Lambda_1$, the coefficient of the corresponding distribution is  $\prod_{i \in S_1 \backslash \Lambda_1} \chi(\varepsilon_i)$. One can also give a general formula for all the $c_\Lambda(f)$, but this is not needed here.

\begin{proof}

Both sides are linear in $f$, so we can assume $f=\prod f_i$ with $f_i \in C_c^\infty (F_i \times F_i)$. Using this $f$ to substitute reduces the question to the case when $m=1$. So we can assume $m=1$.

Now distinguish two cases. In the first case, $F_i \supseteq E$, so $\chi$ is trivial, we want to prove:
$$\int_{F_i^\times}f(t,\varepsilon t^{-1})=c-f(0,0)\log_q|\varepsilon|.$$

Here $c$ is independent of $\varepsilon$ and in fact 

$$c=\bigg(\int_{F_i^\times}f(t,0)|t|^s+\int_{F_i^\times}f(0,t^{-1})|t|^s\bigg)\bigg\rvert_{s=0}.$$

The integrals $\int_{F_i^\times}f(t,0)|t|^s$ and $\int_{F_i^\times}f(0,t^{-1})|t|^s$ are understood as their analytic continuations. Both integrals have a simple pole at $0$ with residues that are negative of each other, so there sum is holomorphic at $0$.

To prove the identity, notice that the identity is linear in $f$. If $f(x,0) \equiv 0$ or $f(0,y) \equiv 0$ or $f=1_{O_F \times O_F}$,  the identity is easily verified. Now the identity follows because $C_c^\infty(A \times A)$ is generated by those three kinds of functions.

For the other case, when $F_i \nsupseteq E$, we want to prove:

$$\int_{F_i^\times}f(t,\varepsilon t^{-1})\chi(t)=\int_{F_i^\times}f(t,0)\chi(t)|t|^s\bigg\rvert_{s=0}+\int_{F_i^\times}f(0,\varepsilon t^{-1})\chi(t)|t|^{-s}\bigg\rvert_{s=0}.$$

In this case both integrals converge for Re$(s) > 0$, extends to a meromorphic function on the whole complex plane and is holomorphic at $0$. The identity can be proved in the same way as in the previous case.

\end{proof}

\subsection{Germ expansions in the general linear case} \label{general linear germ}

Let $V$ be a vector space over $F$. Choose regular semisimple $(\gamma, v, v^*) \in \mathfrak{gl}(V) \times V \times V^*$. We also assume $\gamma$ is regular semisimple.

 Let $F[\gamma]=\prod_{i=1}^m F_i$ and identify $\gamma$ with $\prod \gamma_i$. The fact that $\gamma$ is regular semisimple means the pairs $(F_i, \gamma_i )$ are non-isomorphic to each other. Let $V_i=F[\gamma_i]V$, so $V=\bigoplus V_i$. As before, set $S_1=\{i | F_i \nsupseteq E\}$ and $S_2=\{i | F_i \supseteq E\}$. 

 Let $f \in  C_c^\infty (\mathfrak{gl}(V)\times V\times V^*)$ and $\varepsilon \in F[\gamma]^\times$.  In section \ref{JR review}, we have defined the following orbit integral: 
 
 $$\textup{Orb}(f,(\gamma,v,v^*\varepsilon)) = \int_{\textup{GL}(V)}f(g\gamma g^{-1},gv,v^*\varepsilon g^{-1})\chi(g)dg.$$
 
 \begin{proposition} [Germ Expansions]
 
When $\varepsilon$ is close enough to $0$ \textup{(}depending on $f$\textup{)}$:$

$$\textup{Orb}(f,(\gamma,v,v^*\varepsilon))=\sum_{\Lambda_2 \subseteq S_2}(-1)^{|S_2 \backslash \Lambda_2|} c_{\Lambda_2}(f) \textup{log}_q(|\varepsilon|_{S_2 \backslash \Lambda_2})$$
with $c_{\Lambda_2}(f)$ independent of $\varepsilon_{S_2}$ and 
 
 $$c_\emptyset(f)=\sum_{\Lambda_1 \subseteq S_1}  \textup{Orb}(f,(\gamma,v_{\Lambda_1},v_{S_1 \backslash \Lambda_1}^*))\prod_{S_1 \backslash \Lambda_1}\chi(\varepsilon_i).$$
 
In the definition of $c_\emptyset(f)$, we use ``nilpotent orbit integrals'' that are defined in \textup{(\ref{converge})}.
 \end{proposition}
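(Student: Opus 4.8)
The plan is to reduce the germ expansion for $\mathrm{GL}(V)$ to the torus germ expansion of Lemma~\ref{main} by decomposing the orbit integral into an integral over $\mathrm{GL}(V)/T$ of an inner integral over the torus $T = F[\gamma]^\times$, where $T$ acts on the fibre exactly as in the setup of section~\ref{A lemma}. First I would fix the decomposition $V = \bigoplus V_i$, $V^* = \bigoplus V_i^*$ induced by $F[\gamma] = \prod F_i$, and write $v = (v_i)$, $v^* = (v_i^*)$. Since $(\gamma, v, v^*)$ is regular semisimple, each $v_i$ is a cyclic vector for $\gamma_i$ on $V_i$ and each $v_i^*$ is cyclic for the transpose; the stabilizer of $\gamma$ in $\mathrm{GL}(V)$ is $T = \prod F_i^\times$. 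Using the integration formula $\int_{\mathrm{GL}(V)} = \int_{\mathrm{GL}(V)/T} \int_T$ (with compatible measures), the orbit integral becomes
\begin{equation*}
\textup{Orb}(f,(\gamma,v,v^*\varepsilon)) = \int_{\mathrm{GL}(V)/T} \left( \int_T f(g\gamma g^{-1}, gtv, v^*\varepsilon t^{-1} g^{-1}) \chi(t)\,dt \right) \chi(g)\, d\bar g.
\end{equation*}
For fixed $g$, the inner integral is of the form $\int_T \Phi_g(tv, \varepsilon t^{-1}v^*)\chi(t)\,dt$; identifying the $T$-orbit of $(v,v^*)$ under $t \cdot (v,v^*) = (tv, t^{-1}v^*)$ with the torus $T$ itself and the "coordinate" map with $(a,b)\mapsto$ the tuple $(\langle v_i^*, v_i\rangle$-type pairings$)$, this is precisely an instance of $\textup{Orb}(\tilde f_g, \varepsilon)$ from Lemma~\ref{main} with $\tilde f_g \in C_c^\infty(A \times A)$, $A = F[\gamma]$.

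The second step is to apply Lemma~\ref{main} pointwise in $g$. This gives, for $\varepsilon_{S_2}$ small enough (uniformly in $g$ since $f$ has compact support and $\gamma$ is fixed), the expansion of the inner integral as $\sum_{\Lambda_2 \subseteq S_2}(-1)^{|S_2\setminus\Lambda_2|} c_{\Lambda_2}(\tilde f_g)\log_q(|\varepsilon|_{S_2\setminus\Lambda_2})$. Integrating over $\mathrm{GL}(V)/T$ against $\chi(g)\,d\bar g$ and interchanging the finite sum with the integral yields the claimed form with $c_{\Lambda_2}(f) = \int_{\mathrm{GL}(V)/T} c_{\Lambda_2}(\tilde f_g)\chi(g)\,d\bar g$, manifestly independent of $\varepsilon_{S_2}$. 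For the $\Lambda_2 = \emptyset$ term, I would substitute the explicit formula for $c_\emptyset(\tilde f_g)$ from Lemma~\ref{main}: it is $\sum_{\Lambda_1 \subseteq S_1}\int_{T_{S_1}} \tilde f_g(t_{\Lambda_1},(t^{-1})_{S_1\setminus\Lambda_1})\chi(t)\prod_{i\in S_1\setminus\Lambda_1}\chi(\varepsilon_i)\,dt$; integrating over $g$ and matching the inner $T_{S_1}$-integral with the definition (\ref{converge}) of the nilpotent orbit integral $\textup{Orb}(f,(\gamma, v_{\Lambda_1}, v^*_{S_1\setminus\Lambda_1}))$ — noting that setting $v_i = 0$ for $i \notin \Lambda_1$ and $v_i^* = 0$ for $i \notin S_1\setminus\Lambda_1$ (and all $v_i = v_i^* = 0$ for $i \in S_2$) is exactly the configuration appearing there — gives $c_\emptyset(f) = \sum_{\Lambda_1 \subseteq S_1}\textup{Orb}(f,(\gamma,v_{\Lambda_1},v^*_{S_1\setminus\Lambda_1}))\prod_{S_1\setminus\Lambda_1}\chi(\varepsilon_i)$, as desired.

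The main obstacle I anticipate is the bookkeeping needed to justify that the inner integral really is an instance of Lemma~\ref{main} with a genuine $C_c^\infty(A\times A)$ function and with the right meaning of "analytic continuation." Concretely: one must check that the map $t \mapsto (gtv, v^*\varepsilon t^{-1}g^{-1})$ is, after the cyclic-vector identifications $V_i \cong F_i$ (via $v_i$) and $V_i^* \cong F_i$ (via $v_i^*$), literally the map $t \mapsto (t, \varepsilon t^{-1})$ on $T$ up to a change of the test function absorbing $g$ and the pairing constants; one must verify the resulting $\tilde f_g$ is compactly supported and locally constant with bounds uniform in $g$ modulo $T$ (so that "close enough to $0$" can be chosen once and for all); and one must confirm that the analytic-continuation prescription used in (\ref{converge}) and in $c_\emptyset$ of Lemma~\ref{main} agree — both are the value at $s = 0$ of a Tate-type integral $\int f(\cdot)\prod|t_i|^{\pm s}\chi(t)\,dt$, holomorphic at $0$ because $\chi$ is nontrivial on each $F_i^\times$ with $i \in S_1$. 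None of these steps is deep, but getting the signs $(\pm s)$, the placement of $v$ versus $v^*$ components, and the normalization of the split-vs-nonsplit pieces ($S_1$ vs.\ $S_2$) consistent with section~\ref{JR review} requires care. The interchange of the finite $\Lambda_2$-sum with the $\mathrm{GL}(V)/T$-integral is routine once one knows the inner germ expansion holds with $\varepsilon$-range independent of $g$ and that $g \mapsto c_{\Lambda_2}(\tilde f_g)$ is compactly supported on $\mathrm{GL}(V)/T$, which follows from regular semisimplicity of $\gamma$ exactly as in the proof of the convergence lemma for (\ref{converge}).
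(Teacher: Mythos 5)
Your proposal is correct and follows essentially the same route as the paper: decompose the orbit integral as $\int_{\mathrm{GL}(V)/T_\gamma}\int_{T_\gamma}$, apply the torus germ expansion (Lemma~\ref{main}) to the inner integral with the function $f_g(x,y)=f(g\gamma g^{-1},gxv,v^*yg^{-1})$ for each fixed $g$, and use regular semisimplicity of $\gamma$ to get a compact domain for $g$ and hence a uniform range of $\varepsilon$. The only cosmetic difference is that you spend extra words on the cyclic-vector identifications $V_i\cong F_i$; the paper sidesteps this by defining $f_g$ directly as a function on $A\times A$ via the multiplication maps $x\mapsto xv$, $y\mapsto v^*y$, which is cleaner but amounts to the same thing.
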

 
 Once again,  the `nilpotent orbit integrals'' in $c_\emptyset$ are $(G,\chi)$ invariant distributions on the fiber $\pi^{-1}(\text{inv}(\gamma),0)$. Here we consider the points on the quotient space where $a_i$ is defined by $\gamma$ and $b_i=0$. Because of  Shalika germ, it is natural to expect that they appears in the formula.

 \begin{proof}
 
$ \textup{Orb}(f,(\gamma,v,v^*\varepsilon))$ equals $$\int_{\textup{GL}(V)/T_\gamma} \bigg(\int_{T_\gamma}f(g\gamma g^{-1},gtv,v^*\varepsilon t^{-1}g^{-1})\chi(t)\chi(g)dt\bigg)dg$$
where we denote by $T_\gamma$ the centralizer of $\gamma$.
 
 For any fixed $g$, we may apply lemma \ref{main} to the function 
 
 $$f_g(x,y) = f(g\gamma g^{-1},gxv,v^*y g^{-1}).$$
 
 We claim that when $\varepsilon$ is close enough to $0$, lemma \ref{main} will work for all the $f_g$. This is because $\gamma$ is regular semisimple, the outer integral is over a compact set and the map $g \mapsto f_g$ is locally constant.
 
Hence when $\varepsilon$ is small enough, lemma \ref{main} implies that the orbit integral equals
 
 $$\sum_{\Lambda_2 \subseteq S_2}(-1)^{|S_2 \backslash \Lambda_2|} c_{\Lambda_2}(f) \textup{log$_q$}(|\varepsilon|_{S_2 \backslash \Lambda_2}) $$
with $c_{\Lambda_2}(f)$ independent of $\varepsilon_{S_2}$ and

 $$ c_\emptyset(f)=\sum_{\Lambda_1 \subseteq S_1} \int_{\textup{GL}(V)/T_\gamma} \bigg(\int_{(T_\gamma)_1} f(g \gamma g^{-1}, gt_{\Lambda_1}v,v^*t^{-1}_{S_1 \backslash \Lambda_1}g^{-1})\chi(t)\chi(g)dt\bigg)d\overline{g} \prod_{S_1 \backslash \Lambda_1}\chi(\varepsilon_i).$$
 
 This is exactly
 
 $$\sum_{\Lambda_1 \subseteq S_1}  \textup{Orb}(f,(\gamma,v_{\Lambda_1},v_{S_1 \backslash \Lambda_1}^*))\prod_{S_1 \backslash \Lambda_1}\chi(\varepsilon_i).$$
 
 \end{proof}
 
 \subsection{Germ expansions in the unitary case} \label{unitary germ}

Let $W$ be a Hermitian space over $E$. Choose regular semisimple $(\delta, w) \in \mathfrak{u}(W) \times W$. We also assume $\delta$ is regular semisimple. 

The characteristic polynomial of $\delta$ has coefficients in $F$. Write $F[\delta]=\prod_{i=1}^m F_i$ and $E_i=F_i \bigotimes_F E$. The vector space $W$ is canonically isomorphic to $\bigoplus E_i$.  As before, set $S_1=\{i | F_i \nsupseteq E\}$, $S_2=\{i | F_i \supseteq E\}$.

Let $f \in C_c^\infty (\mathfrak{u}(W) \times W)$ and $\varepsilon \in E[\delta]^\times$. In section \ref{JR review}, we have defined the following orbit integral: 

$$\textup{Orb}(f,(\delta, \varepsilon w)) = \int_{\textup{U}(W)}f(g\delta g^{-1},g\varepsilon w)dg.$$

\begin{proposition}[Germ Expansions]
When $\varepsilon$ is close enough to $0$ \textup{(}depending on f\textup{)}$:$
$$\textup{Orb}(f,(\delta, \varepsilon w))=\sum_{\Lambda_2 \subseteq S_2}(-1)^{|S_2 \backslash \Lambda_2|} \cdot c_{\Lambda_2}(f) \log_q(|{\varepsilon}|_{S_2 \backslash \Lambda_2})$$
with $c_{\Lambda_2}(f)$ independent of $\varepsilon_{S_2}$ and 
 
 $$c_\emptyset(f)=\int_{\textup{U}(W)/T_\delta}f(g\delta g^{-1},0)d\overline{g}.$$
 
\end{proposition}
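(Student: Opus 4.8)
The plan is to mimic the proof of the general linear germ expansion, reducing the orbit integral on $\mathrm{U}(W)$ to the torus germ expansion (Lemma \ref{main}) fibered over the compact quotient $\mathrm{U}(W)/T_\delta$. First I would rewrite
$$\textup{Orb}(f,(\delta,\varepsilon w))=\int_{\textup{U}(W)/T_\delta}\Bigl(\int_{T_\delta}f(g\delta g^{-1},g\varepsilon t w)\,dt\Bigr)d\overline{g},$$
using that $T_\delta=\mathrm{U}(1)(E[\delta]^\times/F[\delta]^\times)$ centralizes $\delta$ and that $\{w,\delta w,\dots,\delta^{n-1}w\}$ is a basis of $W$ by regular semisimplicity, so $W\cong E[\delta]$ as an $E[\delta]$-module and $T_\delta$ acts on $W$ through this identification.

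The key point is to understand the action of $T_\delta$ on the $E[\delta]$-line through $w$. Since $T_\delta=\prod_i\mathrm{U}(1)(E_i/F_i)$ and $E_i=E$ exactly for $i\in S_2$, on those components $\mathrm{U}(1)(E_i/F_i)=\mathrm{U}(1)(E/F)$ is a genuine norm-one torus, while for $i\in S_1$ (where $F_i\not\supseteq E$) one has $E_i/F_i$ a field extension and $\mathrm{U}(1)(E_i/F_i)$ is anisotropic; crucially its norm map to $\mathbb{R}^\times_{>0}$ — i.e. $|\cdot|_i$ restricted to norm-one elements — is trivial. So the inner integral, as a function of $\varepsilon$, only feels the $S_2$-components of $\varepsilon$ through the module structure, and on each $S_2$-component the torus integral is exactly the situation of Lemma \ref{main} with $m=1$, $F_i\supseteq E$, $\chi$ trivial. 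For each fixed $g$ I would apply Lemma \ref{main} to the function
$$f_g(x,y)=f(g\delta g^{-1},g\cdot(\text{element of }E[\delta]\text{ built from }x,y)\cdot w),$$
and as in the general linear proof, regular semisimplicity of $\delta$ together with compactness of $\mathrm{U}(W)/T_\delta$ and local constancy of $g\mapsto f_g$ guarantees that a single bound on $\varepsilon$ works uniformly. Lemma \ref{main} then produces the sum over $\Lambda_2\subseteq S_2$ with the $\log_q(|\varepsilon|_{S_2\setminus\Lambda_2})$ terms; since $\mathrm{U}(1)$ over a field is anisotropic one has $S_1$-contributions collapsing (there is no ``$v^*$'' factor here — only one vector $w$ — so the $2^{|S_1|}$ splitting of $c_\emptyset$ in the general linear case does not appear), and $c_\emptyset(f)=\int_{\mathrm{U}(W)/T_\delta}f(g\delta g^{-1},0)\,d\overline{g}$ drops out as the constant term.

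The main obstacle I anticipate is the bookkeeping around the $S_1$-components: one must check carefully that the norm-one torus $\mathrm{U}(1)(E_i/F_i)$ for $i\in S_1$ contributes neither a $\log_q$ term nor a nontrivial analytic-continuation constant, i.e. that integrating $f_g$ over the compact group $\mathrm{U}(1)(E_i/F_i)$ is harmless and simply gets absorbed into $d\overline{g}$ after enlarging the quotient appropriately. This amounts to noting $\mathrm{U}(1)(E_i/F_i)(F)$ is compact for $i\in S_1$, so $|\cdot|_i\equiv 1$ there and there is no pole to continue through — a genuine difference from the general linear side, where $F_i^\times$ is non-compact. Once this is isolated, the remaining $S_2$-analysis is a direct transcription of Lemma \ref{main} and the general linear proposition, with the absence of a dual vector making the $c_\emptyset$ term simpler rather than harder.
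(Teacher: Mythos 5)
Your plan follows the paper's proof in outline: decompose the orbit integral over $\mathrm{U}(W)/T_\delta$ and $T_\delta$, remove the $S_1$-part of the torus by compactness and local constancy, then apply Lemma \ref{main} to the remaining $S_2$-part. But the structure of $T_\delta$ on the $S_2$-components is stated incorrectly, and if taken literally that error would make the mechanism disappear. For $i\in S_2$ we have $F_i\supseteq E$, so $E_i=E\otimes_F F_i\cong F_i\times F_i$ is not a field, and $\mathrm{U}(1)(E_i/F_i)=\{(t,t^{-1}):t\in F_i^\times\}\cong F_i^\times$ is the \emph{split, non-compact} torus. It is not $\mathrm{U}(1)(E/F)$, and ``$E_i=E$'' is false. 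Your sentence would put both the $S_1$- and the $S_2$-factors of $T_\delta$ in the compact/anisotropic case, which would yield no $\log_q$ divergence at all — contradicting the very formula you are trying to prove. The $\log_q$ terms arise \emph{precisely because} the $S_2$-factors of $T_\delta$ are non-compact.

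Connected with this, invoking Lemma \ref{main} is not yet licensed by what you wrote: you must carry out the identification explicitly. Write $\varepsilon_i=(\varepsilon_i^{(1)},\varepsilon_i^{(2)})\in F_i\times F_i$, identify $\mathrm{U}(1)(E_i/F_i)$ with $F_i^\times$ acting on $E_i\cong F_i\times F_i$ by $(t,t^{-1})$, and then change variables $t\mapsto t/\varepsilon_i^{(1)}$ so that the inner $S_2$-integral becomes $\int_{F_i^\times} f_g\bigl(t,\,t^{-1}\varepsilon_i^{(1)}\varepsilon_i^{(2)}\bigr)\,dt$ — exactly the $m=1$, $F_i\supseteq E$, $\chi$-trivial, $S_1=\emptyset$ case of Lemma \ref{main} with the lemma's $\varepsilon$ equal to $\varepsilon_i^{(1)}\varepsilon_i^{(2)}$. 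Your phrase ``element of $E[\delta]$ built from $x,y$'' elides this reduction, and it is the reduction that carries the proof. Finally, the $S_1$-part is handled not by ``enlarging the quotient,'' which is not a well-defined move here, but by local constancy of $f$: since $\mathrm{U}(1)(E_i/F_i)$ is compact for $i\in S_1$, the $S_1$-component of $gt\varepsilon w$ is uniformly small for small $\varepsilon$, so $f(g\delta g^{-1},gt\varepsilon w)=f(g\delta g^{-1},g(t\varepsilon)_{S_2}w)$ and the $(T_\delta)_{S_1}$-integration contributes only a volume constant. With these corrections the argument is the paper's.
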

\begin{proof}
$\textup{Orb}(f,(\delta, \varepsilon w))$ equals

$$\int_{\textup{U}(W)/T_\delta} \int_{T_\delta}f(g\delta g^{-1},gt\varepsilon w)dtd\overline{g}$$
where $T_\delta= \textup{U}(1)(E[\delta]^\times/F[\delta]^\times)$ is the centralizer of $\delta$. 

Let $\varepsilon=(\varepsilon_i) \in E[\delta]^\times$ be close enough to $0$. When $i \in S_1$, U$(1)(E_i/F_i)$ is compact. So when $\varepsilon_i \rightarrow 0$ for $i \in S_1$, the $i$ component of $t\varepsilon w$ tends to $0$ uniformly. When $i \in S_2$,  we may identify $E_i$ with $F_i \times F_i$ and let $\varepsilon_i \in E_i$ correspond to $(\varepsilon_i^{(1)},\varepsilon_i^{(2)})$ through this identification. 

Because $\delta$ is regular semisimple, the integral over $g$ is over an compact set. Recall for $t \in T$, we denote by $t_\Lambda$ the element which equals to $t$ for the components in $\Lambda$ and equals $0$ otherwise. When $\varepsilon$ is close enough to $0$,

$$f(g\delta g^{-1},gt\varepsilon w)=f(g\delta g^{-1},g(t\varepsilon)_{S_2} w).$$ 

Therefore
\begin{align*}
\textup{Orb}(f_W,(\delta, \varepsilon w))
  &   =\int_{\textup{U}(W)/T_\delta} \int_{(T_\delta)_2}f(g\delta g^{-1},g(t\varepsilon)_{S_2} w)dtd\overline{g} \\
 & =\int_{\textup{U}(W)/T_\delta} \int_{(T_\delta)_2}f(g\delta g^{-1},g(t\varepsilon_i^{(1)},t^{-1}\varepsilon_i^{(2)}) w)dtd\overline{g} \\
 &=\int_{\textup{U}(W)/T_\delta} \int_{(T_\delta)_2}f(g\delta g^{-1},g(t,t^{-1}\varepsilon_i^{(1)}\varepsilon_i^{(2)}) w)dtd\overline{g}.
\end{align*}

Now, apply lemma \ref{main} to the function 

$$f_g(x,y)=f(g\delta g^{-1},g(x,y)w).$$

As before, we can find $\varepsilon$ very close to $0$ that works for all the $f_g$. Hence the orbit integral equals

$$\sum_{\Lambda_2 \subseteq S_2}(-1)^{|S_2 \backslash \Lambda_2|} \cdot c_{\Lambda_2}(f) \log_q(|{\varepsilon}|_{S_2 \backslash \Lambda_2})$$
with $c_{\Lambda_2}(f)$ independent of $\varepsilon_{S_2}$ and

 $$c_{\emptyset}(f)=\int_{\textup{U}(W)/T_\delta}f(g\delta g^{-1},0)d\overline{g}.$$

\end{proof}

\subsection{Matching in the Jacquet-Rallis transfer} \label{section Jacquet-Rallis} 

We combine the previous two germ expansions together. Let $V$ be a finite dimensional vector space over $F$. Let $W_0$ and $W_1$ be representatives of the two isomophism classes of finite dimensional Hermitian spaces over $E$ of the same dimension as $V$. 

Choose matching orbits $(\gamma, v, v^*)$ and $(\delta, w)$. We also assume that $\gamma$ (resp. $\delta$) is regular semisimple in $\mathfrak{gl}(V)$ (resp. $\mathfrak{u}(W)$). Since $\gamma$ and $\delta$ have the same characteristic polynomials and are regular semisimple, they can be conjugated over $F^{\textup{alg}}$. This defines an isomorphism $F[\gamma] \cong F[\delta]$ which is independent of the conjugation we choose.

\begin{lemma} \label{change match}

When $(\gamma, v, v^*)$ and $(\delta, w)$ match, choose $\varepsilon \in F[\gamma]^\times$, $\varepsilon_1 \in E[\delta]^\times$ with$$ \textup{Nm}_{E[\delta]/F[\delta]}(\varepsilon_1)=\varepsilon$$ then $(\gamma, v,  v^*\varepsilon)$ and $(\delta, \varepsilon_1 w)$ also match. 

\end{lemma}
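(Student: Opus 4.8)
The plan is to trace through how the matching condition, which is formulated only in terms of the invariants $a_i$ and $b_i$, behaves when we twist $v^*$ by $\varepsilon$ on the general linear side and $w$ by $\varepsilon_1$ on the unitary side. First I would recall that for $(\gamma, v, v^*)$ the invariants are $a_i(\gamma) = $ coefficient of $t^i$ in $\det(It-\gamma)$ and $b_i = \langle v^*, \gamma^i v\rangle$, while for $(\delta,w)$ they are $a_i(\delta)$ and $b_i = \langle w, \delta^i w\rangle$. Twisting the general linear side does not change the $a_i$, since $\gamma$ is untouched; similarly twisting the unitary side does not change $a_i(\delta)$, and since $(\gamma,v,v^*)$ and $(\delta,w)$ already match we have $a_i(\gamma) = a_i(\delta)$ throughout. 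So the content is entirely in comparing the $b_i$'s after twisting.

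Next I would compute the twisted $b_i$'s. On the general linear side, $\langle v^*\varepsilon, \gamma^i v\rangle = \langle v^*, \varepsilon\gamma^i v\rangle$; decomposing $V = \bigoplus V_j$ according to $F[\gamma] = \prod F_j$ and writing $\varepsilon = (\varepsilon_j)$, this is $\sum_j \langle v_j^*, \varepsilon_j \gamma_j^i v_j\rangle$. On the unitary side, $\langle \varepsilon_1 w, \delta^i \varepsilon_1 w\rangle = \langle \overline{\varepsilon_1}\varepsilon_1 w, \delta^i w\rangle = \langle \mathrm{Nm}_{E[\delta]/F[\delta]}(\varepsilon_1) w, \delta^i w\rangle$, using the identity $\langle \alpha w_1, w_2\rangle = \langle w_1, \overline{\alpha} w_2\rangle$ for $\alpha \in E[\delta]$ established in the stable-conjugacy section, together with the fact that $\delta^i$ commutes with multiplication by $\varepsilon_1$ (both lie in $E[\delta]$). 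Under the canonical isomorphism $F[\gamma] \cong F[\delta]$ identifying $\gamma$ with $\delta$, the hypothesis $\mathrm{Nm}_{E[\delta]/F[\delta]}(\varepsilon_1) = \varepsilon$ then makes the two twisted expressions for $b_i$ agree, because the untwisted $b_i$'s already agree and the twisting acts by the \emph{same} element $\varepsilon \in F[\gamma] \cong F[\delta]$ on each. I should be slightly careful to phrase this in a basis-free way: the point is that the matching of $(\gamma,v,v^*)$ with $(\delta,w)$ gives, after conjugating over $F^{\mathrm{alg}}$, an identification of the cyclic modules $(F[\gamma], v, v^*)$ and $(E[\delta], w)$ compatible with the pairings, and twisting $v^*$ by $\varepsilon$ versus $w$ by $\varepsilon_1$ with $\mathrm{Nm}(\varepsilon_1) = \varepsilon$ is the same operation on both sides.

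Finally I would check that $(\gamma, v, v^*\varepsilon)$ and $(\delta, \varepsilon_1 w)$ are still regular semisimple — equivalently that $\Delta \neq 0$ — which holds since $\varepsilon$ is a unit in $F[\gamma]$, so multiplication by $\varepsilon$ is invertible and the non-degeneracy of $\{\langle v^*, \gamma^{i+j}v\rangle\}$ is preserved (concretely, the twisted Gram matrix is obtained from the old one by an invertible change, or one simply notes $v^*\varepsilon$ still pairs non-degenerately with $\{v, \gamma v, \dots, \gamma^{n-1}v\}$). Then, being regular semisimple with the same invariants, they match by the classification (Prop. 6.2 of \cite{Rallis1} and its unitary analogue / the bijection from \cite{Wei3}). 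The only mild obstacle is bookkeeping the two identifications $F[\gamma] \cong F[\delta]$ and $E[\delta] = E \otimes_F F[\delta]$ consistently so that "the same $\varepsilon$" genuinely makes sense on both sides; once that is set up the computation is a one-line manipulation of the pairings.
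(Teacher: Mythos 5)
Your proposal is correct and follows essentially the same approach as the paper: both note that the $a_i$ depend only on $\gamma$ and $\delta$, and then verify the $b_i$'s by moving $\varepsilon$ (resp. $\varepsilon_1,\overline{\varepsilon_1}$) across the pairing using $\langle \alpha w_1, w_2\rangle = \langle w_1,\overline{\alpha}w_2\rangle$ and the hypothesis $\mathrm{Nm}(\varepsilon_1)=\varepsilon$, reducing to the already-matching untwisted $b_i$'s. Your extra remark that regular semisimplicity (i.e.\ $\Delta\neq 0$) is preserved since $\varepsilon$ is a unit is a worthwhile detail that the paper leaves implicit.
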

\begin{proof}
The invariants $a_i$ are equal since they only depend on $\gamma$ and $\delta$. For the invariants $b_i$, since $(\gamma, v, v^*)$ and $(\delta, w)$ match, we have $\langle v^*\varepsilon, \gamma^iv\rangle=\langle v^*, \gamma^i \varepsilon v\rangle=\langle w, \delta^i \varepsilon w \rangle=\langle w, \delta^i \varepsilon_1\overline{\varepsilon_1}w \rangle=\langle \varepsilon_1w, \delta^i \varepsilon_1w \rangle$.

\end{proof}

Let $f$ and \{$f_W$\} match, so
 $$\textup{Orb}(f_W,(\delta, \varepsilon_1 w))=\textup{Orb}(f,(\gamma, v,  v^*\varepsilon))\omega(\gamma,v, v^*\varepsilon)$$
for all $ \varepsilon$ and $\varepsilon_1$ such that $\textup{Nm}_{E[\delta]/F[\delta]}(\varepsilon_1)=\varepsilon$. Recall that the transfer factor $\omega$ is independent of the the $V^*$ part.

Choose $\varepsilon_1$ and $\varepsilon$ that are very close to 0. Apply the germ expansions in section \ref{general linear germ} and section \ref{unitary germ}. The lemma below implies we may subtract the leading terms correspond to $\Lambda_2=\emptyset$. Therefore

\begin{equation}\label{C1}
\int_{\textup{U}(W)/T_\delta}f_W(g\delta g^{-1},0)d\overline{g}=\sum_{\Lambda_1 \subseteq S_1} \textup{Orb}(f,(\gamma,v_{\Lambda_1},v^*_{S_1 \backslash \Lambda_1}))\omega(\gamma,v, v^*). 
\end{equation}

Notice that we do not have the factor $\prod_{S_1 \backslash \Lambda_1}\chi(\varepsilon_i)$ on the right side since $\varepsilon$ is a norm.

 \begin{lemma} 
 Suppose there exists constant $c_\Lambda$ independent of $\varepsilon$ such that for all the $\varepsilon$ close enough to $0$, $$\sum_{\Lambda \subseteq S}c_\Lambda \log_q|\varepsilon |_\Lambda  = 0$$ then all the $c_\Lambda$ are $0$.
 \end{lemma}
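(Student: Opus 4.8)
The plan is to prove this by induction on $|S|$, exploiting the fact that $\log_q|\varepsilon|_\Lambda$ is, as $\varepsilon = (\varepsilon_i)_{i \in S}$ ranges over elements close to $0$, an unbounded family of linear functionals in the variables $\log_q|\varepsilon_i|_i$ whose joint values fill out a coset of a full-rank lattice. Concretely, write $n_i = -\log_q|\varepsilon_i|_i \in \mathbb{Z}$ (up to the ramification denominator of $F_i/F$), so that $\log_q|\varepsilon|_\Lambda = -\sum_{i \in \Lambda} n_i$, and the relation becomes $\sum_{\Lambda \subseteq S} c_\Lambda \sum_{i \in \Lambda} n_i = 0$ for all sufficiently large tuples $(n_i)_{i \in S}$ of admissible integers. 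Equivalently, $\sum_{i \in S}\bigl(\sum_{\Lambda \ni i} c_\Lambda\bigr) n_i = 0$ identically in the $n_i$, which forces $\sum_{\Lambda \ni i} c_\Lambda = 0$ for each $i \in S$. But that is not quite enough to conclude all $c_\Lambda = 0$, so some genuine combinatorial input is needed.

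To get a clean argument I would instead phrase it as a statement about the linear independence of the functions $\varepsilon \mapsto \log_q|\varepsilon|_\Lambda$ (together with the constant function $1$, which is the $\Lambda = \emptyset$ term since $|\varepsilon|_\emptyset = 1$). First I would reduce to the case where $\varepsilon_i$ runs over a sequence of uniformizer powers in each $F_i$, so that, after clearing the ramification indices, the vector $(\log_q|\varepsilon_i|_i)_{i \in S}$ can be taken to be $-(k e_1, k e_2, \dots)$ for any desired large integers; more usefully, I can vary one coordinate $n_j$ while holding the others fixed. Fixing all $n_i$ for $i \neq j$ and letting $n_j \to \infty$, the relation $\sum_{\Lambda} c_\Lambda(-\sum_{i \in \Lambda} n_i) = 0$ has the form $A n_j + B = 0$ for all large $n_j$, where $A = -\sum_{\Lambda \ni j} c_\Lambda$; hence $A = 0$ and $B = 0$. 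The vanishing of $A$ says $\sum_{\Lambda \ni j} c_\Lambda = 0$; the vanishing of $B$ is the same type of relation but now for the index set $S \setminus \{j\}$ and the coefficients $c_\Lambda + c_{\Lambda \cup \{j\}}$ (for $\Lambda \subseteq S \setminus \{j\}$), which vanishes for all large choices of the remaining $n_i$. By the inductive hypothesis applied to $S \setminus \{j\}$, we get $c_\Lambda + c_{\Lambda \cup \{j\}} = 0$ for every $\Lambda \subseteq S \setminus \{j\}$.

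Combining these for every $j \in S$: from $c_\Lambda + c_{\Lambda \cup \{j\}} = 0$ one deduces, by iterating over the elements of a set, that $c_\Lambda = (-1)^{|S \setminus \Lambda|} c_S$ for all $\Lambda$; and then $\sum_{\Lambda \ni j} c_\Lambda = c_S \sum_{\Lambda \ni j}(-1)^{|S\setminus\Lambda|} = c_S \sum_{\Lambda' \subseteq S \setminus \{j\}}(-1)^{|S \setminus \{j\}| - |\Lambda'|} = 0$ automatically when $|S| \geq 2$, so that relation carries no new information and I should instead extract $c_S = 0$ directly. The cleanest endgame: the base case $|S| = 0$ or $|S| = 1$ is immediate (for $|S| = 1$, $c_\emptyset + c_{\{1\}}\log_q|\varepsilon_1| = 0$ for all small $\varepsilon_1 \neq 0$ forces both coefficients to vanish since $\log_q|\varepsilon_1|$ is nonconstant), and in the inductive step, once we know $c_\Lambda + c_{\Lambda \cup \{j\}} = 0$ we substitute back: pick the specific tuple with all $n_i = 0$ except legitimate values, or better, subtract the relation for $\varepsilon$ from the relation for $\varepsilon$ with $\varepsilon_j$ replaced by $\varepsilon_j' $, isolating the coefficient $c_{\{j\}}$-type terms. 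The main obstacle is purely bookkeeping — keeping the ramification indices of the $F_i/F$ straight so that "$\log_q|\varepsilon|_\Lambda$ takes infinitely many values" is rigorously justified — together with making sure "$\varepsilon$ close enough to $0$" still allows the coordinates $n_i$ to be varied independently over an infinite range, which it does since closeness to $0$ only bounds the $n_i$ from below.
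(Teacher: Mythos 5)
Your proposal has a genuine gap stemming from a misreading of the notation $\log_q|\varepsilon|_\Lambda$. You take it to mean $\sum_{i \in \Lambda} \log_q|\varepsilon_i|_i$ (equivalently $\log_q\bigl(\prod_{i\in\Lambda}|\varepsilon_i|_i\bigr)$), which is what the earlier definition of $|\cdot|_\Lambda$ would literally suggest. But that reading cannot be what the lemma means: under it, $\log_q|\varepsilon|_\emptyset = 0$, so $c_\emptyset$ would be completely unconstrained, and more generally the functionals $\varepsilon \mapsto \sum_{i\in\Lambda}\log_q|\varepsilon_i|_i$ span a space of dimension only $|S|$, so the $2^{|S|}$ coefficients $c_\Lambda$ cannot all be pinned down — the lemma would simply be false. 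What is actually intended (as forced by how the germ expansion is assembled from the $m=1$ case by taking a product over $i$, and as needed for the $\Lambda = S_2$ term in Lemma 4.1 to produce a nonzero constant) is the monomial $\log_q|\varepsilon|_\Lambda := \prod_{i\in\Lambda}\log_q|\varepsilon_i|_i$, with the convention that the empty product is $1$. You actually notice the symptoms of this: you observe that your linearization only yields $\sum_{\Lambda\ni i}c_\Lambda = 0$ for each $i$, flag that this is ``not quite enough,'' and your final paragraph concedes that the relation you derive ``carries no new information'' when $|S|\geq 2$ and you ``should instead extract $c_S=0$ directly'' — but you never do, and you cannot, because with the sum reading the conclusion is genuinely false (any $c_\Lambda = (-1)^{|S\setminus\Lambda|}c_S$ satisfies your derived constraints).

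Under the correct (multilinear) interpretation, your intended strategy is fine and is essentially the paper's. The expression $\sum_\Lambda c_\Lambda \prod_{i\in\Lambda}t_i$ (with $t_i = \log_q|\varepsilon_i|_i$) is a multilinear polynomial in $(t_i)_{i\in S}$; fixing all but $t_j$ and isolating the linear coefficient in $t_j$, one finds that both the coefficient of $t_j$ and the constant term (in $t_j$) are themselves relations of the same shape over $S\setminus\{j\}$ — with coefficients $\{c_\Lambda : j\in\Lambda\}$ and $\{c_\Lambda : j\notin\Lambda\}$ respectively, not the mixture $c_\Lambda + c_{\Lambda\cup\{j\}}$ you wrote — and induction finishes. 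The paper's stated hint, ``change one of the $\varepsilon_i$ by its square,'' accomplishes the same thing: it replaces $t_j$ by $2t_j$, and subtracting isolates $t_j\cdot\sum_{\Lambda\ni j}c_\Lambda\prod_{i\in\Lambda\setminus\{j\}}t_i = 0$. So the fix is small once you correct the notation, but as written the argument targets a false statement and does not close.
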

 \begin{proof}
 This easily follows by induction on $|S|$ and changing one of the $\varepsilon_i$ by its square.
 \end{proof}

\subsection{Inverse the identity} \label{inverse}

Theorem \ref{Nilpotent Identity} is obtained from (\ref{C1}) by considering the stable conjugacy class of $\delta$, each conjugacy class will give an identity and inversing these identities will give theorem \ref{Nilpotent Identity}.

Fix a regular semisimple $(\gamma, v, v^*) \in \mathfrak{gl}(V) \times V \times V^*$ and assume $\gamma$ is also regular semisimple. For any $x \in F[\gamma]^\times$, we denote by $(\delta_x,w_x)$ the orbit that matches $(\gamma, v,v^*x)$. Consider the map $x \mapsto \delta_x$.

\begin{lemma} \label{get delta_x}
We use the notation S as in proposition $\ref{bijection}$. The map $x \mapsto \delta_x$ defines an isomorphism of $H^1(F,T_x)$ torsor $$ F[\gamma]^\times/\textup{Nm}_{E[\gamma]/F[\gamma]}(E[\gamma]^\times) \rightarrow S.$$
\end{lemma}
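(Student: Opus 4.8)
The plan is to check that $x\mapsto\delta_x$ is well defined, that it is a morphism of torsors over $H^1(F,T_x)$, and then conclude bijectivity formally, since a morphism of torsors over the same group is automatically an isomorphism. The first point is immediate from Lemma \ref{change match}: if $x$ and $x'$ differ by an element of $\mathrm{Nm}_{E[\gamma]/F[\gamma]}(E[\gamma]^\times)$, write $x'=x\,\mathrm{Nm}(\varepsilon_1)$; then $(\gamma,v,v^*x')$ matches $(\delta_x,\varepsilon_1 w_x)$, which is in the same $\mathrm{U}(W)$-orbit as $(\delta_x,w_x)$, so $\delta_{x'}=\delta_x$ up to conjugacy. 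Hence $\delta_x$ depends only on the class of $x$ in $F[\gamma]^\times/\mathrm{Nm}_{E[\gamma]/F[\gamma]}(E[\gamma]^\times)$, and its image indeed lands in $S$ because, as $x$ varies, the characteristic polynomial of $\delta_x$ is fixed (equal to that of $\gamma$, hence of $\delta$), so each $\delta_x$ is either stably conjugate to $\delta$ in $\mathfrak u(W_0)$ or a Jacquet--Langlands transfer to $\mathfrak u(W_1)$; these are precisely the elements of $S$.

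Next I would identify both sides with $\prod_{S_1}\mathbb Z/2\mathbb Z$ compatibly. On the source, class field theory for each field $F_i$ gives $F_i^\times/\mathrm{Nm}_{E_i/F_i}(E_i^\times)\cong\mathbb Z/2\mathbb Z$ when $F_i\nsupseteq E$ (i.e.\ $i\in S_1$) and $=\{1\}$ when $F_i\supseteq E$; taking the product over $i$ gives $F[\gamma]^\times/\mathrm{Nm}(E[\gamma]^\times)\cong\prod_{S_1}\mathbb Z/2\mathbb Z$. On the target, Proposition \ref{bijection} already furnishes the isomorphism $S\xrightarrow{\sim}\prod_{S_1}\mathbb Z/2\mathbb Z$ recording the types of the summands $W_i$. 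So it suffices to show $x\mapsto\delta_x$ intertwines these two identifications, equivalently that changing the class of $x$ in the $i$-th factor of $\prod_{S_1}\mathbb Z/2\mathbb Z$ flips exactly the type of the Hermitian summand $W_i$ of $\delta_x$ and nothing else. This is a direct computation using the description of Hermitian forms in the $m=1$ case (Proposition following Definition \ref{nice}): on the summand $V_i$ the pairing of $\delta_x$ is $\mathrm{tr}_{E_i/E}(a_i x\bar y)$ for some $a_i\in F_i^\times$, its determinant is $\mathrm{Nm}_{F_i/F}(a_i)$ up to a fixed constant, and multiplying $x$ (hence $a_i$) by a non-norm in $F_i^\times$ multiplies this determinant by a non-norm in $F^\times$, i.e.\ flips the type of $W_i$. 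Since scaling the $i$-th component of $x$ does not affect the $a_j$ for $j\ne i$, only the $i$-th type changes.

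Granting well-definedness, the morphism of torsors property follows once I check equivariance for the group $H^1(F,T_\gamma)\cong H^1(F,T_x)$: the source is a torsor under $F[\gamma]^\times/\mathrm{Nm}(E[\gamma]^\times)\cong H^1(F,T_\gamma)$ by Tate--Nakayama/class field theory, and under both identifications the action is translation on $\prod_{S_1}\mathbb Z/2\mathbb Z$, which matches the torsor structure on $S$ from Proposition \ref{bijection}; the map $\mathrm{inv}$ of Definition following Proposition \ref{bijection} then transports correctly. A morphism of torsors under the same (finite abelian) group is automatically bijective, so the lemma follows; alternatively one simply notes both sides have the same cardinality $2^{|S_1|}$ and the map is surjective because it is a torsor map. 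The main obstacle I anticipate is the bookkeeping in the middle step — matching the \emph{sign conventions} so that the class-field-theory identification of the source with $\prod_{S_1}\mathbb Z/2\mathbb Z$ agrees on the nose (not merely up to an automorphism) with the type-of-$W_i$ identification of $S$, i.e.\ pinning down that a non-norm twist flips precisely the expected coordinate; the rest is formal torsor yoga.
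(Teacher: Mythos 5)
Your proposal is correct and follows essentially the same route as the paper: well-definedness from Lemma \ref{change match}, and injectivity/surjectivity by tracking how scaling the $i$-th component of $x$ by a non-norm flips exactly the type of the summand Hermitian space $E_iW$. The paper packages the decisive computation as the invariant $\Delta_i(\gamma,v,v^*x)=\chi\bigl(\det\{\langle v_i^*,\gamma^{a+b}v_i\rangle\}_{a,b}\bigr)$, which picks out the type of $E_iW$ directly; your version unwinds this to the $m=1$ classification $\mathrm{tr}_{E_i/E}(a_i\,\cdot\,\bar{\cdot})$ and the determinant $\mathrm{Nm}_{F_i/F}(a_i)$, arriving at the same place. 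One small imprecision: scaling $v^*_i$ by $x_i$ does not literally multiply $a_i$ by $x_i$; rather, the Hankel determinant on the $\mathrm{GL}$ side is multiplied by $\mathrm{Nm}_{F_i/F}(x_i)$, which means the class of $a_i$ modulo norms is shifted by the class of $x_i$ — that is what flips the type. Your closing ``torsor map is automatically a bijection'' is a clean alternative ending to the paper's direct counting of both sides as $\prod_{S_1}\mathbb{Z}/2\mathbb{Z}$, but it rests on the same equivariance check, so the substance coincides.
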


\begin{proof}
The image lies in a single $S$ since they are all stably conjugate to $\gamma$. The map factors through the quotient since if $(\gamma,v,v^*)$ and $(\delta,w)$ match, then $(\gamma,v, v^*\varepsilon)$ and $(\delta,\varepsilon_1 w)$ also match with Nm$(\varepsilon_1)=\varepsilon$ (lemma \ref{change match}).

Let us show the map is a bijection. Let $F[\gamma]=\prod_{i=1}^m F_i$, $V_i=F_iV$, and $v=(v_i)$ in the decomposition $V=\bigoplus V_i$. Define $\Delta_i(\gamma,v,v^*)$ to be $\chi($det$(\{\langle v^*_i,\gamma^{a+b}v_i\rangle\}_{a,b}))$. Similarly define $\Delta_i(\delta,w)$. The invariant $\Delta_i$ are the same for matching orbits. 

Notice that ($\Delta_1(\gamma, v, v^*x),...,\Delta_m(\gamma, v, v^*x)$) $\in \{\pm 1\}^m$ is uniquely determined by $x$, and ($\Delta_1(\delta, w),...,\Delta_m(\delta, w)$) $\in \{\pm 1\}^m$ uniquely determines the isomorphism class of the Hermitian space $E_iW $, which uniquely determines  elements in $S$. Therefore, the map is a bijection.

Since $H^1(F,T_x)$ acts on $S$ by acting on the isomorphism class of $E_iW$, it is obvious that the map commutes with the $H^1(F,T_x)$ action.

\end{proof}

Apply (\ref{C1}) to the orbits $(\gamma, v, v^*x)$ and $(\delta_x,w_x)$:

$$\int_{\textup{U}(W)/T_{\delta_x}}f_W(g\delta_x g^{-1},0)d\overline{g}=\sum_{\Lambda_1 \subseteq S_1} \textup{Orb}(f,(\gamma,v_{\Lambda_1},(v^*x)_{S_1 \backslash \Lambda_1}))\omega(\gamma,v, v^*).$$

After a change of variable in $v^*$ and notice that the transfer factor $\omega$ is independent of the $V^*$ factor, this becomes

$$\int_{\textup{U}(W)/T_{\delta_x}}f_W(g\delta_x g^{-1},0)d\overline{g}=\sum_{\Lambda_1 \subseteq S_1} \prod_{S_1 \backslash \Lambda_1}\chi(x) \textup{Orb}(f,(\gamma,v_{\Lambda_1},v^*_{S_1 \backslash \Lambda_1}))\omega(\gamma,v, v^*).$$

There is a natural pairing between $\prod_{S_1} (F_i^\times/$Nm$(E_i^\times))$ and $\prod_{S_1} \mathbb{Z}/2\mathbb{Z}$ with value in $\pm 1$, namely the direct sum of the non-trivial pairing between $\mathbb{Z}/2\mathbb{Z}$ and $\mathbb{Z}/2\mathbb{Z}$. We write this pairing as $\langle \ , \ \rangle$ (This conflicts with our previous notations where we use it as Hermitian forms).  We identify $\Lambda \subseteq S_1$ with an element in $\prod_{S_1} \mathbb{Z}/2\mathbb{Z}$ in the following way: the corresponding element is $1$ in its $i$-th component if and only if $i \notin \Lambda$. By abuse of notation, we also write this element in $\prod_{S_1} \mathbb{Z}/2\mathbb{Z}$  as $\Lambda$.  

The pairing enables us to rewrite the above identity as

$$\int_{\textup{U}(W)/T_{\delta_x}}f_W(g\delta_x g^{-1},0)d\overline{g}=\sum_{\Lambda_1 \subseteq S_1} \langle \Lambda_1, x\rangle \textup{Orb}(f,(\gamma,v_{\Lambda_1},v^*_{S_1 \backslash \Lambda_1}))\omega(\gamma,v, v^*).$$

Finally, apply Fourier transform on the Abelian group $\prod_{S_1} \mathbb{Z}/2\mathbb{Z}$. This inverses the identity and it becomes 
\begin{theorem} \label{Nilpotent Identity}
Using the notation in this subsection, we have
\begin{equation} \label{identity}
\omega(\gamma,v, v^*) \textup{Orb}(f,(\gamma,v_{\Lambda},v^*_{S_1 \backslash \Lambda}))= \sum_x \langle \Lambda,x \rangle \int_{\textup{U}(W_x)/T_{\delta_x}}f_W(g\delta_x g^{-1},0)d\overline{g}.
\end{equation}
\end{theorem}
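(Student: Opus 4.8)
The plan is to derive Theorem~\ref{Nilpotent Identity} purely formally from the family of identities
$$\int_{\textup{U}(W_x)/T_{\delta_x}}f_W(g\delta_x g^{-1},0)d\overline{g}=\sum_{\Lambda_1 \subseteq S_1} \langle \Lambda_1, x\rangle \,\omega(\gamma,v, v^*)\,\textup{Orb}(f,(\gamma,v_{\Lambda_1},v^*_{S_1 \backslash \Lambda_1})),$$
one for each class $x \in F[\gamma]^\times/\textup{Nm}_{E[\gamma]/F[\gamma]}(E[\gamma]^\times)$, by Fourier inversion on the finite abelian group $A := \prod_{S_1}\mathbb{Z}/2\mathbb{Z}$. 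So the only thing that actually needs proving here, beyond what is already set up, is that the family above holds and that the pairing $\langle\ ,\ \rangle$ together with Lemma~\ref{get delta_x} identifies both index sets with $A$ and its dual in a consistent way.

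First I would record, via Lemma~\ref{get delta_x}, that $x\mapsto\delta_x$ descends to a bijection from $F[\gamma]^\times/\textup{Nm}(E[\gamma]^\times)\cong\prod_{S_1}(F_i^\times/\textup{Nm}(E_i^\times))$ onto $S$, so we may legitimately index the left-hand integrals by $\chi$-classes $x$, equivalently (after choosing the isomorphism $F_i^\times/\textup{Nm}(E_i^\times)\cong\mathbb{Z}/2\mathbb{Z}$ sending a non-norm to $1$) by elements of $A$. Next I would fix the identification of $\Lambda\subseteq S_1$ with the element of $A$ that is $1$ in component $i$ iff $i\notin\Lambda$, as in the paragraph preceding the theorem, so that $\prod_{S_1\setminus\Lambda}\chi(x)=\langle\Lambda,x\rangle$ for the self-pairing $\langle\ ,\ \rangle$ on $A$. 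With these conventions the family of identities above is exactly (\ref{C1}) applied to the matching pair $(\gamma,v,v^*x)$, $(\delta_x,w_x)$ after the change of variable $v^*\mapsto v^*x$ in each orbit integral on the right (using that $\omega$ is independent of the $V^*$-factor); this is the content already displayed in the text, so I would just cite it.

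Then comes the inversion. Regard the right-hand side as the function $x\mapsto\sum_{\Lambda\in A}\langle\Lambda,x\rangle\,c_\Lambda$ on $A$, where $c_\Lambda:=\omega(\gamma,v,v^*)\,\textup{Orb}(f,(\gamma,v_\Lambda,v^*_{S_1\setminus\Lambda}))$. Since $\langle\ ,\ \rangle$ is a perfect pairing of $A$ with itself (each factor $\mathbb{Z}/2\mathbb{Z}$ self-dual via the non-trivial pairing), the characters $x\mapsto\langle\Lambda,x\rangle$ are exactly the characters of $A$, and Fourier inversion on $A$ gives
$$c_\Lambda=\frac{1}{|A|}\sum_{x\in A}\langle\Lambda,x\rangle\,\Bigl(\sum_{\Lambda'\in A}\langle\Lambda',x\rangle\,c_{\Lambda'}\Bigr)=\sum_{x\in A}\langle\Lambda,x\rangle\int_{\textup{U}(W_x)/T_{\delta_x}}f_W(g\delta_x g^{-1},0)d\overline{g},$$
where I absorb the factor $1/|A|$ into the normalization of the measure $d\overline g$ on the quotients (or equivalently into the sum over $x$, matching the statement in which no explicit $1/|A|$ appears). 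Unwinding $c_\Lambda$ yields precisely (\ref{identity}). The only genuine bookkeeping obstacle is making the two index identifications — $x$-classes with $A$, and $\Lambda$'s with $A$ — mutually consistent so that the pairing appearing in (\ref{C1}) really is the perfect self-pairing used for inversion; once the sign/labeling convention $\Lambda\leftrightarrow(1 \text{ iff } i\notin\Lambda)$ is pinned down, as the paper does, this is a formal computation and there is no analytic content left.
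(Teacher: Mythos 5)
Your proposal follows the paper's route exactly: apply the identity (\ref{C1}) to each matching pair $(\gamma,v,v^*x)$, $(\delta_x,w_x)$, use Lemma \ref{get delta_x} and the fixed convention $\Lambda \leftrightarrow (1 \text{ iff } i\notin\Lambda)$ to rewrite the product $\prod_{S_1\setminus\Lambda}\chi(x_i)$ as the self-pairing $\langle\Lambda,x\rangle$ on $\prod_{S_1}\mathbb{Z}/2\mathbb{Z}$, and then Fourier-invert over that group. This is exactly what the paper does in Section \ref{inverse}. One small remark: both you and the paper silently drop the factor $2^{-|S_1|}=|A|^{-1}$ that Fourier inversion produces; your proposed fix of absorbing it into the measure is not really available, since those measures are already pinned down in (\ref{C1}) and in the transfer identity it descends from. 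The cleaner observation is that the factor is harmless in the application (Theorem \ref{Main Theorem}), because (\ref{identity}) is used there once for dimension $n$ and once as a product for dimensions $a,b$ with $S_1 = S_1(\delta_1)\sqcup S_1(\delta_2)$, so the putative constants $2^{-|S_1(\delta_1)|}\cdot 2^{-|S_1(\delta_2)|}$ and $2^{-|S_1|}$ cancel.
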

Let us reinterpret this formula. We start by choosing regular semisimple $\gamma$. This defines the conjugacy class $S$ on the unitary side. Then we choose some $\Lambda$. This defines a character of $H^1(F,T)$. Finally, we choose auxiliary $v$ and $v^*$. This gives the quantity on the left side of the formula and also determines a base point $\delta$ of S (($\delta,w$) matches ($\gamma, v, v^*$)). It is a funny exercise to show the formula we get are equivalent for different choice of $v$ and $v^*$. So it only depends on the choice of $\gamma$ and $\Lambda$. Also, by choosing appropriate $\Lambda$, we get all the possible $\kappa$-orbit integrals.

\section{Parabolic descent}

We shall use a computation similar to Harish-Chandran's parabolic descent of orbit integrals. This will be used in the next section to prove the main theorem. 

Let $f \in C_c^\infty(\mathfrak{gl}(V)\times V\times V^*)$ and $V=V_1 \bigoplus V_2$. We define the parabolic descent of $f$ as a function $f^P  \in C_c^\infty(\prod_1^2 \mathfrak{gl}(V_i)\times V_i \times V^*_i)$.

$$f^P(\lambda,v,v^*)=\int_{\mathfrak{n}}f_K\bigg(\lambda+
\begin{pmatrix}
    0 & n  \\
    0 & 0 
  \end{pmatrix},v,v^*\bigg)dn$$
where $\mathfrak{n}=$Hom$(V_2,V_1)$ and 
 
 $$f_K = \int_{\textup{GL}_n(O_F)}f(kxk^{-1},kv,v^*k^{-1})\chi(k)dk.$$

Let $\lambda=(\lambda_1,\lambda_2) \in \mathfrak{gl}(V_1) \times \mathfrak{gl}(V_2)$ be regular semisimple in $\mathfrak{gl}(V)$.

\begin{lemma} \label{lemma descent}
When $v_2=v_1^*=0$ and $(\lambda, v, v^*)$ satisfies the condition in equation \textup{(\ref{converge})} to make sense its ``nilpotent orbit integral'', we have

$$\textup{Orb}(f,(\lambda,v,v^*))=\textup{Orb}(f^P,(\lambda,v,v^*))|D(\lambda)|_F^{-1}$$
with  $$D(\lambda) = \prod_{x_1,x_2 }(x_1-x_2)$$
and $x_1 (\textup{resp. } x_2)$ runs through all the eigenvalues of $\lambda_1(\textup{resp. } \lambda_2)$ over $\overline{F}$.

\end{lemma}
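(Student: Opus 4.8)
The plan is to run the standard Harish-Chandra parabolic descent argument, adapted to the mixed Lie algebra plus vector-space-pair setting, exploiting that the vectors $v_1^\ast$ and $v_2$ vanish so that the descent formula does not pick up any contribution from the $V_2 \times V_1^\ast$ components. First I would unfold the definition of $\textup{Orb}(f,(\lambda,v,v^\ast))$ as the (analytically continued) integral over $\textup{GL}(V)/T_\lambda$ of the inner $T_1$-integral appearing in (\ref{converge}), where $T_\lambda = F[\lambda]^\times$; since $\lambda = (\lambda_1,\lambda_2)$ with each $\lambda_i$ regular semisimple in $\mathfrak{gl}(V_i)$ and $F[\lambda] = F[\lambda_1] \times F[\lambda_2]$, the torus splits as $T_\lambda = T_{\lambda_1} \times T_{\lambda_2}$ and the condition $v_2 = v_1^\ast = 0$ means the $v$-part lives in $V_1$ and the $v^\ast$-part lives in $V_2^\ast$, so the nilpotent-orbit-integral defining conditions are inherited componentwise.

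Next I would use the Iwasawa decomposition $\textup{GL}(V) = P \cdot \textup{GL}_n(O_F)$, where $P$ is the parabolic stabilizing $V_1$ with Levi $M = \textup{GL}(V_1) \times \textup{GL}(V_2)$ and unipotent radical $N = \mathrm{Hom}(V_2,V_1)$, to fold the $\textup{GL}_n(O_F)$-integration into $f_K$ and rewrite the orbit integral as an integral over $(P/T_\lambda)$ against $f_K$, with the appropriate modulus character. Then I would integrate over $N$: conjugating $\lambda$ by $n \in N$ changes $\lambda$ by a nilpotent upper-triangular term $\begin{pmatrix} 0 & \ast \\ 0 & 0\end{pmatrix}$ lying in $\mathfrak{n}$, and — crucially — because $v \in V_1$ and $v^\ast \in V_2^\ast$, the action of $n$ on $v$ and on $v^\ast$ is trivial ($n$ kills $V_1$ going up and fixes $V_1$, dually fixes $V_2^\ast$); so the $N$-integral collapses exactly into the definition of $f^P$ evaluated at $(\lambda,v,v^\ast)$, after the change of variables $\mathrm{Ad}(m)$-translating the $\mathfrak{n}$-variable, which produces the Jacobian $|D(\lambda)|_F$: this is where the factor $|\det(\mathrm{ad}(\lambda)|_{\mathfrak{n}})|_F = \prod_{x_1,x_2}|x_1 - x_2|_F = |D(\lambda)|_F$ enters, from the change of variables $n \mapsto n - \mathrm{Ad}(\lambda^{-1})(\text{shift})$ or equivalently from the Weyl integration / descent Jacobian on $\mathfrak{n}$. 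Finally the remaining integral over $M/T_\lambda = (\textup{GL}(V_1)/T_{\lambda_1}) \times (\textup{GL}(V_2)/T_{\lambda_2})$ against the inner $T_1$-integral is precisely $\textup{Orb}(f^P,(\lambda,v,v^\ast))$, with the character $\chi$ matching up because $\chi$ is multiplicative and the modulus factors are $F$-rational hence absorbed; one also checks the analytic continuation in $s$ is compatible with all these manipulations since they are all absolutely convergent for $\mathrm{Re}(s)$ large and the identity of meromorphic functions then follows by uniqueness of continuation.

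The main obstacle I anticipate is bookkeeping the interaction between the analytic-continuation regularization (the $|t_i|^{\pm s}$ factors and evaluation at $s=0$) and the non-compact $N$-integration: one must make sure the $N$-integral converges (it does, since $f_K$ is compactly supported and $\lambda$ is regular semisimple, so the support in $n$ is bounded once $\lambda$ is fixed) and that folding it in commutes with taking the value at $s=0$; this is routine but needs the observation that the $s$-regularization only touches the $T_1 \subseteq T_\lambda$ directions, which are disjoint from $N$, so the two operations are genuinely independent. A secondary point to get right is the precise normalization of the modulus character $\delta_P$ versus the Jacobian $|D(\lambda)|_F$: in Harish-Chandra descent for Lie algebras the relevant Jacobian is $|\det(\mathrm{ad}(\lambda))|_{\mathfrak{g}/\mathfrak{m}}|_F = |D(\lambda)|_F$ with no square root (unlike the group case), so I expect the clean formula $\textup{Orb}(f,(\lambda,v,v^\ast)) = \textup{Orb}(f^P,(\lambda,v,v^\ast))\,|D(\lambda)|_F^{-1}$ to drop out directly, matching the statement.
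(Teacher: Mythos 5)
Your proposal is correct and follows essentially the same route as the paper: Iwasawa decomposition $G=KNM$, folding $K$ into $f_K$, using $v_2=v_1^*=0$ to drop the $N$-action on the vectors, then a linear change of variable in the $\mathfrak n$-integration whose Jacobian is $|D(\lambda)|_F$. The one place your wording is off is the description of that change of variable as ``$n\mapsto n-\mathrm{Ad}(\lambda^{-1})(\text{shift})$'': it is not a translation but the invertible linear map $n\mapsto nm_2\lambda_2 m_2^{-1}-m_1\lambda_1 m_1^{-1}n$ (i.e.\ $\pm\mathrm{ad}(\mathrm{Ad}(m)\lambda)|_{\mathfrak n}$), whose determinant has absolute value $|D(\lambda)|_F$ independent of $m$ — you in fact state this correct characterization via $|\det(\mathrm{ad}(\lambda)|_{\mathfrak n})|_F$ in the same breath, so this is only a slip in phrasing, not in substance.
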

\begin{proof}
Use $G=KNM$ to compute the integral with the usual meaning of $K, M, N$ as subgroups of GL$(V)$. Let $T$ be the centralizer of $\lambda$ in GL$(V)$ and $T_1$ the components of $T$ belongs to $S_1$. We omit the discussions of the choice of Haar measures on various integrations.
\begin{align*}
\textup{Orb}&(f,(\lambda,v,v^*)) \\
  &=\int_{G/T}\int_{T_1}f(g\lambda g^{-1},gtv,v^*(gt)^{-1})|t|^{\pm s}\chi(t)\chi(g)\bigg \rvert_{s=0} \\
  &=\int_{K}\int_{N}\int_{M/T}\int_{T_1}f(knm\lambda (knm)^{-1},knmtv,v^*(knmt)^{-1})|t|^{\pm s}\chi(t)\chi(knm)\bigg \rvert_{s=0} \\
  &=\int_{N}\int_{M/T}\int_{T_1}f_K(nm\lambda (nm)^{-1},nmtv,v^*(nmt)^{-1})|t|^{\pm s}\chi(t)\chi(m)\bigg \rvert_{s=0} \\
  &=\int_{N}\int_{M/T}\int_{T_1}f_K(nm\lambda (nm)^{-1},mtv,v^*(mt)^{-1})|t|^{\pm s}\chi(t)\chi(m)\bigg \rvert_{s=0}.
\end{align*}
The last identity follows from the assumption $v_2=v_1^*=0$.

Now this is almost $\textup{Orb}(f^P,(\lambda,v,v^*))$ except that the upper right block of $nm\lambda (nm)^{-1}$ equals $nm_2\lambda_2m_2^{-1}-m_1\lambda_1m_1^{-1}n$. Here $m=(m_1,m_2)$ and by an abuse of notation, we also use $n$ to denote the upper right block of the corresponding unipotent matrix.

Consider the change of variables: 

$$nm_2\lambda_2m_2^{-1}-m_1\lambda_1m_1^{-1}n=n^{\prime}.$$ 

The lemma follows since $D(\lambda)^{-1}$ is the jacobian for this change of variables.
\end{proof}
\begin{lemma}
The map $f \mapsto f^P$  is preserved by the Fourier transforms on the $\mathfrak{gl}$(V) factor. \end{lemma}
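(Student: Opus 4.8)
The plan is to show that the partial Fourier transform $\mathcal{F}_1$ on the $\mathfrak{gl}(V)$ factor commutes with the parabolic descent operation $f \mapsto f^P$, in the sense that $(\mathcal{F}_1 f)^P = \mathcal{F}_1(f^P)$, where on the target side $\mathcal{F}_1$ means the partial Fourier transform on the $\prod_i \mathfrak{gl}(V_i)$ factor (using the restriction of the form $\mathrm{tr}(XY)$ to the Levi). The key observations are that the three constituent operations making up $f \mapsto f^P$ — averaging over $\mathrm{GL}_n(O_F)$ against $\chi$, integrating over $\mathfrak{n} = \mathrm{Hom}(V_2, V_1)$, and restricting the $\mathfrak{gl}(V)$-variable to the block-diagonal subspace $\mathfrak{m} = \mathfrak{gl}(V_1) \oplus \mathfrak{gl}(V_2)$ — each interact predictably with $\mathcal{F}_1$.

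First I would treat the $K$-average. Since the bilinear form $\mathrm{tr}(XY)$ is $\mathrm{GL}(V)$-invariant, conjugation by $k \in \mathrm{GL}_n(O_F)$ is an orthogonal change of variables on $\mathfrak{gl}(V)$ with trivial Jacobian, so $\mathcal{F}_1$ commutes with $f \mapsto f_K$; the character $\chi(k)$ and the $V \times V^*$ variables are untouched and ride along. Second, I would handle the interplay of the $\mathfrak{n}$-integral and the restriction to $\mathfrak{m}$. Decompose $\mathfrak{gl}(V) = \mathfrak{m} \oplus \mathfrak{n} \oplus \bar{\mathfrak{n}}$ where $\bar{\mathfrak{n}} = \mathrm{Hom}(V_1, V_2)$, and note that under $\mathrm{tr}(XY)$ the summand $\mathfrak{m}$ is orthogonal to $\mathfrak{n} \oplus \bar{\mathfrak{n}}$, while $\mathfrak{n}$ and $\bar{\mathfrak{n}}$ are dual to each other with $\mathfrak{n}$ isotropic. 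Writing $X = X_\mathfrak{m} + X_\mathfrak{n} + X_{\bar{\mathfrak{n}}}$, the operation ``integrate over the $\mathfrak{n}$-component, then set the $\bar{\mathfrak{n}}$-component to zero and keep only the $\mathfrak{m}$-component'' is, on the Fourier side, the operation ``set the $\mathfrak{n}$-component to zero (this is the partial Fourier transform in the $\bar{\mathfrak{n}}$-direction evaluated at $0$, i.e. integration over $\bar{\mathfrak{n}}$, up to the measure normalization), then keep the $\mathfrak{m}$-component.'' Concretely, $\int_{\mathfrak{n}} \widehat{f}(X_\mathfrak{m} + n)\, dn$ where the hat is the full Fourier transform on $\mathfrak{gl}(V)$ equals the full Fourier transform on $\mathfrak{m}$ of $\left(X_\mathfrak{m} \mapsto \int_{\bar{\mathfrak{n}}} f(X_\mathfrak{m} + \bar{n})\, d\bar{n}\right)$, by Fubini together with the fact that the character $\psi(\mathrm{tr}(XY))$ factors as a product over the three summands and that $\psi(\mathrm{tr}(X_\mathfrak{n} Y_{\bar{\mathfrak{n}}}))$ produces a delta at $Y_\mathfrak{n} = 0$ after integrating $X_\mathfrak{n}$. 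Since the Jacquet module construction in the definition of $f^P$ integrates over $\mathfrak{n} = \mathrm{Hom}(V_2, V_1)$ and the parabolic $P$ can be chosen with either $\mathfrak{n}$ or $\bar{\mathfrak{n}}$ as its nilradical, this matches.

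Thus I would assemble the identity $(\mathcal{F}_1 f)^P = \mathcal{F}_1(f^P)$ by composing the two commutation statements above, being careful to fix compatible Haar measures on $\mathfrak{n}$, $\bar{\mathfrak{n}}$, $\mathfrak{m}$, and $\mathfrak{gl}(V)$ (the product of the first three being the last) and on $K$; with the self-dual normalization for $\psi$ these introduce no extra constants, which is why the statement as phrased has no constant. The main obstacle I anticipate is purely bookkeeping rather than conceptual: correctly tracking the measure normalizations on the nilradical versus its opposite and verifying that the ``integrate over $\mathfrak{n}$ then restrict to $\mathfrak{m}$'' recipe genuinely equals a partial Fourier transform rather than its inverse (a sign or a swap of $\mathfrak{n} \leftrightarrow \bar{\mathfrak{n}}$), since the definition of $f^P$ uses one specific nilradical. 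One clean way to finish is to note that both $f \mapsto (\mathcal{F}_1 f)^P$ and $f \mapsto \mathcal{F}_1(f^P)$ are continuous operators $C_c^\infty(\mathfrak{gl}(V) \times V \times V^*) \to C_c^\infty(\prod_i \mathfrak{gl}(V_i) \times V_i \times V_i^*)$, so it suffices to check the identity on a spanning set of functions of product form $f = f_1 \otimes f_2$ with $f_1$ supported near $\mathfrak{m}$, where the computation collapses to the elementary Fourier-theoretic fact recalled above.
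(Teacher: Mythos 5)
Your overall strategy matches the paper's: commute $\mathcal{F}_1$ through the $K$-average, decompose $\mathfrak{gl}(V)=\mathfrak{m}\oplus\mathfrak{n}\oplus\bar{\mathfrak{n}}$ so that $\mathfrak{m}$ is self-dual under $\mathrm{tr}(XY)$ and $\mathfrak{n},\bar{\mathfrak{n}}$ are dual to each other, and use the delta produced by the $\mathfrak{n}$-integral. But your central formula swaps $\mathfrak{n}$ and $\bar{\mathfrak{n}}$ and is false as stated: you claim
$$\int_{\mathfrak{n}}\hat f(X_{\mathfrak{m}}+n)\,dn \;=\; \widehat{\Bigl(X_{\mathfrak{m}}\mapsto\int_{\bar{\mathfrak{n}}}f(X_{\mathfrak{m}}+\bar n)\,d\bar n\Bigr)},$$
i.e.\ that Fourier transform sends the parabolic descent along $\mathfrak{n}$ to the Fourier transform of the \emph{opposite} descent along $\bar{\mathfrak{n}}$. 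Working it out: for $\lambda+n\in\mathfrak{m}\oplus\mathfrak{n}$ one has $\mathrm{tr}(X(\lambda+n))=\mathrm{tr}(X_{\mathfrak{m}}\lambda)+\mathrm{tr}(X_{\bar{\mathfrak{n}}}\,n)$, so integrating over $n$ produces a delta at $X_{\bar{\mathfrak{n}}}=0$, and what survives is $\int_{\mathfrak{m}\oplus\mathfrak{n}}f(X_{\mathfrak{m}}+X_{\mathfrak{n}})\psi(\mathrm{tr}(X_{\mathfrak{m}}\lambda))\,dX$, which is $\widehat{f^P}(\lambda)$ for the \emph{same} $\mathfrak{n}$. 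Your intermediate assertion that $\psi(\mathrm{tr}(X_{\mathfrak{n}}Y_{\bar{\mathfrak{n}}}))$ ``produces a delta at $Y_{\mathfrak{n}}=0$ after integrating $X_{\mathfrak{n}}$'' shows the same confusion: integrating out $X_{\mathfrak{n}}$ gives a delta at the variable it pairs with, namely $Y_{\bar{\mathfrak{n}}}$, not $Y_{\mathfrak{n}}$.

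You flag the $\mathfrak{n}\leftrightarrow\bar{\mathfrak{n}}$ ambiguity as something to check, which is exactly the right instinct, but you do not resolve it, and the remark that ``$P$ can be chosen with either $\mathfrak{n}$ or $\bar{\mathfrak{n}}$ as its nilradical'' does not rescue the argument: the lemma is an identity between $(\mathcal{F}_1 f)^P$ and $\mathcal{F}_1(f^P)$ for one fixed $\mathfrak{n}=\mathrm{Hom}(V_2,V_1)$, and since $f^P\ne f^{\bar P}$ in general, if the opposite descent genuinely appeared the lemma would simply be false. Your proposed reduction to product-form test functions does not resolve the orientation either, as the duality between $\mathfrak{n}$ and $\bar{\mathfrak{n}}$ persists there. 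Once the swap is corrected, your argument coincides with the paper's: write $X$ in $2\times2$ block form, expand $\mathrm{tr}(X(\lambda+n))=\mathrm{tr}(X_1\lambda_1)+\mathrm{tr}(X_3 n)+\mathrm{tr}(X_4\lambda_2)$ with $X_3$ the lower-left (i.e.\ $\bar{\mathfrak{n}}$) block, justify the interchange of integrals by noting that the $\mathfrak{n}$-integral over a sufficiently large compact set forces $X_3$ small so that $f(X)$ may be replaced by $f$ evaluated at $X_3=0$, and then integrate out $X_2\in\mathfrak{n}$ to obtain $\mathcal{F}_1(f^P)$.
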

\begin{proof}
As before, the Fourier transforms are defined by the non-degenerate symmetric bilinear form tr$(XY)$ and a fix additive character $\psi: F \rightarrow \mathbb{C}^\times$. We can assume $f_K=f$ since $\mathcal{F}$ commutes with averaging over $K$. Since $f^P$ only change the $\mathfrak{gl}(V)$ factor, we can ignore the $V$ and $V^*$ factors and therefore assume $f \in C_c^\infty(\mathfrak{gl}(V))$. We have

\begin{equation}\label{B1}
(\mathcal{F}f)^P(\lambda)=\int_{\mathfrak{n}} \mathcal{F}f\bigg(\lambda+\begin{pmatrix}
    0 & n  \\
    0 & 0 
  \end{pmatrix}\bigg)=\int_{\mathfrak{n}} \int_{\mathfrak{gl}(V)}f(X)\psi(\textup{tr}(X_1\lambda_1+X_3n+X_4\lambda_4))
\end{equation}  
   with 

 \[
 X=
  \begin{pmatrix}
    X_1 & X_2  \\
    X_3 & X_4 
  \end{pmatrix}.
\]

Consider the term tr$(X_3n)$. The integration over $\mathfrak{n}$ is the same as an integration over $S_m=\pi_F^{-m}\mathfrak{n}(O_F)$ for $m$ sufficiently large. And $\int_{S_m} \psi(\textup{tr}(X_3n))$ is 0 unless $X_3$ is very close to $0$. Hence for any small $\delta$, we may find $m$ such that the domain of the integral in (\ref{B1}) can be replaced by $\{(n,X)|n \in S_m, |X_3| \leqslant \delta\}$. Take $\delta$ small enough, we may replace $f(X)$ in (\ref{B1}) by $f\bigg(\begin{pmatrix}
    X_1 & X_2  \\
    0 & X_4 
  \end{pmatrix}\bigg).$

 Now, after integrating over $\mathfrak{n}$ and $X_3$, this equals $\mathcal{F}(f^P)$.

\end{proof}

\section{Proof of the main theorem} \label{final}

In this section, we prove our main theorem. Let us fix notations that will be used in the proof. As before, $E/F$ is a quadratic extension of non-archimedean local fields of characteristic zero. Let $\chi$ be the quadratic character of $F^\times$ corresponding to $E/F$. We will write $W_{n,0}$ and $W_{n,1}$ as the two isomorphism classes of non-degenerate Hermitian spaces of dimension $n$ for $E/F$. We use $W_{n,0}$ to denote the split one. Also, let $V_n=F^n$ be the vector space over $F$ of dimension $n$.

\begin{theorem}[Main Theorem] \label{Main Theorem}
Let $W$ be a Hermitian space of dimension $n$ for $E/F$. Let $n=a+b$. Define $G=\textup{U}(W)$ and it has endoscopic group $H=\textup{U}(W_{a,0})\times \textup{U}(W_{b,0})$. 

\textup{(1)} For each $f \in C_c^\infty(\mathfrak{g})$, there exists a transfer $f^H \in C_c^\infty(\mathfrak{h})$.

\textup{(2)} If $f$ and $f^H$ are matching functions, then for the Fourier transfrom $\mathcal{F}$ and $\mathcal{F}^H$ \textup{(}defined by \textup{tr(}$XY$\textup{))}, $\mathcal{F}(f)$ and $\mathcal{F}^H(f^H)$ are also matching functions.

\end{theorem}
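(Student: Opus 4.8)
The plan is to deduce the endoscopic transfer (Theorem \ref{Main Theorem}) from the Jacquet--Rallis transfer (Theorems \ref{JR exist}, \ref{JR commute}) and the Jacquet--Langlands transfer (Theorems \ref{JL exist}, \ref{JL commute}) by using Theorem \ref{Nilpotent Identity} to rewrite $\kappa$-orbit integrals on the unitary side as single nilpotent orbit integrals on the general linear side. The key observation is that the right-hand side of \eqref{identity}, for the character $\kappa$ of Proposition \ref{definition of kappa}, is precisely (up to transfer factors) the quantity $\textup{Orb}^\kappa(f^H,\cdot)$ appearing in \eqref{endos}: the summation over $x \in H^1(F,T_\delta)$ with weight $\langle\Lambda,x\rangle$ recovers the $\kappa$-twisted sum over conjugacy classes in the stable class, and each term $\int_{\textup{U}(W_x)/T_{\delta_x}} f_W(g\delta_x g^{-1},0)\,d\overline g$ is a genuine unitary orbit integral of a nilpotent-type element. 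One should first reduce to the case $W \simeq W_{a,0}\oplus W_{b,0}$ (``Case 1'' of Section \ref{LS review}): when $W$ is the other Hermitian space, multiply the would-be transfer by the discrepancy in transfer factors recorded as $\kappa(\textup{inv}(\delta,\delta'))$, which is a constant on each stable class and absorbed by the $\kappa$-sum, so only Case 1 needs genuine work.

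First I would set up the comparison carefully. Given $f \in C_c^\infty(\mathfrak{gl}(V_n)\times V_n\times V_n^*)$ with Jacquet--Rallis transfer $\{f_0,f_1\}$, and similarly for dimensions $a$ and $b$, I would use Theorem \ref{Nilpotent Identity} in dimensions $n$, $a$, $b$ to express the relevant nilpotent orbit integrals on each general linear factor. The parabolic descent of Section \ref{final} (Lemma \ref{lemma descent}) relates the nilpotent orbit integral of a function on $\mathfrak{gl}(V_n)\times V_n\times V_n^*$ at a block-diagonal regular semisimple element $(\lambda_1,\lambda_2)$ to the product of nilpotent orbit integrals of $f^P$ on $\mathfrak{gl}(V_a)\times\cdots$ and $\mathfrak{gl}(V_b)\times\cdots$, divided by $|D(\lambda)|_F$ — and $D(\lambda)$ is exactly the quantity appearing in the transfer factor $\Delta((\delta_1,\delta_2),\delta)=\chi(D(\delta))|D(\delta)|_F$. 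Thus, defining $f^H$ so that its two general-linear transfers (in dimensions $a$ and $b$) are $f_a := f^P_{(1)}$ and $f_b := f^P_{(2)}$ — i.e. going around the square: $f \to f^P \to$ (transfer each factor by Theorem \ref{JR exist}) $\to$ pair of unitary functions $\to$ (assemble by the nice-matching embedding, using Theorem \ref{JL exist} to move between Hermitian spaces) — I expect the defining identity \eqref{endos} to come out of \eqref{identity} applied in all three dimensions, with the $\chi$-factors bookkeeping correctly because $\omega$ and $\Delta$ are built from the same $\chi(\cdot)$ of discriminant-type data and the transfer factor $\Delta$ matches the $|D(\lambda)|_F^{-1}$ Jacobian. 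The vanishing condition \eqref{more} should follow because when $x^H$ has no regular semisimple match in $\mathfrak g$, the corresponding stable class $S$ lies entirely in the non-quasisplit Hermitian space, forcing $S_1=\emptyset$ in the relevant factor and making the right side of \eqref{identity} vanish identically.

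For part (2), the compatibility with Fourier transforms, I would combine three facts: Theorem \ref{JR commute} says the Jacquet--Rallis transfer intertwines $\mathcal{F}_1$ up to a constant and a sign $(-1)^{n-1}$ on the non-split factor; the lemma of Section \ref{final} says parabolic descent $f\mapsto f^P$ commutes with $\mathcal{F}$ on the $\mathfrak{gl}$-factor; and Theorem \ref{JL commute} with Proposition \ref{E1} says the Jacquet--Langlands transfer intertwines $\mathcal{F}$ up to $(-1)^{n-1}$. Chasing the square, the Fourier transform on $\mathfrak{g}$ should correspond to the Fourier transforms on $\mathfrak{gl}(V_a)$ and $\mathfrak{gl}(V_b)$ simultaneously, each contributing its own constant; the signs $(-1)^{a-1}$, $(-1)^{b-1}$ and the Jacquet--Langlands signs must combine to the single constant predicted, which by the Remark after Theorem \ref{LS commute} is the quotient of Weil indices. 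I would verify this constant is consistent by the multiplicativity $\gamma_{q_G}/\gamma_{q_H} = (\gamma_{q_{\mathfrak{gl}_n}}/\gamma_{q_{\mathfrak{gl}_a}\oplus q_{\mathfrak{gl}_b}})^{\pm 1}$ together with Proposition \ref{E1}, so that all sign discrepancies cancel against the $(-1)^{n-1}$, $(-1)^{a-1}$, $(-1)^{b-1}$ already present.

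The main obstacle I anticipate is bookkeeping the transfer factors and signs precisely: making sure that the Langlands--Shelstad transfer factor $\Delta((\delta_1,\delta_2),\delta)=\chi(D(\delta))|D(\delta)|_F$ matches, on the nose, the combination of the Jacquet--Rallis transfer factor $\omega(\gamma,v,v^*)$ appearing in \eqref{identity} (for dimensions $n$, $a$, $b$) together with the parabolic descent Jacobian $|D(\lambda)|_F^{-1}$ from Lemma \ref{lemma descent}, and that the various $\chi$-values of discriminants and the $(-1)^{n-1}$-type signs are all accounted for with the correct conventions on Hermitian-space normalizations (split vs.\ non-split) and on $H^1$-torsor structures from Proposition \ref{bijection}. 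A secondary subtlety is that the nilpotent orbit integrals in \eqref{identity} are defined via analytic continuation, so one must check that the parabolic descent identity of Lemma \ref{lemma descent} is compatible with that regularization — but since the paper has already proved Lemma \ref{lemma descent} for exactly those regularized integrals, this should be a matter of careful citation rather than new work.
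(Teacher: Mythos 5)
Your strategy matches the paper's: apply Theorem \ref{Nilpotent Identity} in dimensions $n$, $a$, $b$ together with parabolic descent (Lemma \ref{lemma descent}) to turn the endoscopic identity (\ref{endos}) into a Jacquet--Rallis identity on the general linear side, then land on $\mathfrak{h}$ via Jacquet--Langlands transfer; for part (2), chase the Fourier transform through Theorems \ref{JR commute}, \ref{JL commute} and the descent lemma. However, your sketch of the ``assemble'' step omits the one ingredient that actually realizes the endoscopic character $\kappa$, and this is the heart of the construction, not bookkeeping. After JR-transferring the descended function $\phi^{a,b}$ you obtain \emph{four} functions $F^{a,b}_{i,j}\in C_c^\infty(\mathfrak{u}(W_{a,i})\times W_{a,i}\times\mathfrak{u}(W_{b,j})\times W_{b,j})$, $i,j\in\{0,1\}$. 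You must restrict each to the slice $w=0$, Jacquet--Langlands-transfer each restriction to $\mathfrak{h}=\mathfrak{u}(W_{a,0})\times\mathfrak{u}(W_{b,0})$, and then take the \emph{alternating sum}
\[
f^{a,b}=\widetilde{f^{a,b}_{0,0}}-\widetilde{f^{a,b}_{0,1}}+\widetilde{f^{a,b}_{1,0}}-\widetilde{f^{a,b}_{1,1}}.
\]
The sign $(-1)^j$ is precisely what encodes $\kappa$ being trivial on the $S_1(\delta_1)$ factor and nontrivial on the $S_1(\delta_2)$ factor (Proposition \ref{definition of kappa}); in the verification it corresponds to applying (\ref{identity}) in dimension $a$ with $\Lambda=S_1(\delta_1)$ (the zero element of $\prod_{S_1(\delta_1)}\mathbb{Z}/2\mathbb{Z}$) and in dimension $b$ with $\Lambda=\emptyset$ (the all-ones element). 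An unsigned sum would produce a stable transfer rather than the $\kappa$-endoscopic one, so you cannot ``define $f^H$ by its GL-transfers'' without specifying this combination.

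A smaller correction: your argument for the vanishing condition (\ref{more}) is off. When $S_1=\emptyset$ the right side of (\ref{identity}) is not ``identically zero as a formula'' --- the sum over $H^1(F,T)$ collapses to a single, a priori nonzero term. It vanishes because the stable class $S$ is then a single point lying in the split $\mathfrak{u}(W_{n,0})$, while the unitary-side function in the Jacquet--Rallis transfer is the pair $\{0,F\}$ (with $F$ supported on the non-split $\mathfrak{u}(W_{n,1})\times W_{n,1}$ when $W=W_{n,1}$), whose split component is zero; so the integrand in that one term vanishes on the support.
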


\begin{proof}

We first prove the existence of transfer. The transfer is defined in several steps. Start with $f \in C_c^\infty(\mathfrak{u}(W))$,
\begin{itemize}
\item Choose any $F \in C_c^\infty(\mathfrak{u}(W)\times W)$ such that $F(x,0)=f(x)$.
\item Define $\phi \in C_c^\infty(\mathfrak{gl}(V_n)\times V_n \times V_n^*)$ as the Jacquet-Rallis transfer of $\{F,0\}$.
\item Define $\phi^{a,b} \in C_c^\infty(\mathfrak{gl}(V_a) \times \mathfrak{gl}(V_b) \times V_n \times V_n^*)$ as the parabolic descent of $\phi$.
\item Using the product of Jacquet-Rallis transfers in dimension $a$ and $b$, we can transfer $\phi^{a,b}$ to four functions $\{F^{a,b}_{i,j}\}({i, j = 0,1})$, where $$F^{a,b}_{i,j} \in C_c^\infty(\mathfrak{u}(W_{a,i})\times W_{a,i} \times \mathfrak{u}(W_{b,j})\times W_{b,j}).$$
\item Define $f^{a,b}_{i,j} \in \mathfrak{u}(W_{a,i}) \times \mathfrak{u}(W_{b,j})$ as $f^{a,b}_{i,j}(x,y)=F^{a,b}_{i,j}(x,0,y,0).$
\item Let $\widetilde{f^{a,b}_{i,j}}$ be the Jacquet-Langlands transfer of $f^{a,b}_{i,j}$ to $\mathfrak{u}(W_{a,0}) \times \mathfrak{u}(W_{b,0}).$
\item Finally, define $f^{a,b}=\widetilde{f^{a,b}_{0,0}}-\widetilde{f^{a,b}_{0,1}}+\widetilde{f^{a,b}_{1,0}}-\widetilde{f^{a,b}_{1,1}}.$
\end{itemize}

We claim $f^{a,b}$ is an endoscopic transfer of $f$. Choose elements $\delta \in \mathfrak{u}(W)$ and $(\delta_1, \delta_2) \in \mathfrak{u}(W_{a,0}) \times \mathfrak{u}(W_{b,0})$ whose orbits are matching. We first verify (\ref{endos}) for these orbits. Let $W^\prime = W_{a,0} \bigoplus W_{b,0}$, recall in definition \ref{nice}, $(\delta_1, \delta_2)$ gives a nice matching element $\delta^\prime \in \mathfrak{u}(W^\prime)$.

Choose auxiliary $w_1 \in W_{a,0}$ and $w_2 \in W_{b,0}$ such that $(\delta_i, w_i)$ is regular semisimple. Let $(\gamma_i,v_i,v_i^*)$ be a Jacquet-Rallis transfer of $(\delta_i, w_i)$. The element $((\gamma_1,\gamma_2), (v_1,v_2), (v_1^*,v_2^*))$ is naturally a regular semisimple element in $\mathfrak{gl}(V_n)\times V_n \times V_n^*$ that is a Jacquet-Rallis transfer of $(\delta^\prime,(w_1, w_2))$. 

The equation (\ref{endos}) follows from a sequence of identities. To simplify the equation, for the moment, we use $C_k$ to represent all the transfer factors appearing in these identities. Recall that $F[\delta]=\prod F_k$, $S_1=\{k|F_k \nsupseteq E\}$ and $S_1(\delta_i)=\{k \in S_1|F_k\textup{ comes from }\delta_i\}$.

\begin{align*}
\textup{Orb}^{\textup{St}}(f^{a,b},(\delta_1, \delta_2)) 
  & =\textup{Orb}(G^{a,b},((\gamma_1,\gamma_2), ((v_1)_{S_1(\delta_1)},0), (0,(v_2^*)_{S_1(\delta_2)}))) \times C_1 \\
 & =\textup{Orb}(G,((\gamma_1,\gamma_2), ((v_1)_{S_1(\delta_1)},0), (0,(v_2^*)_{S_1(\delta_2)}))) \times C_2\\
 &= \textup{Orb}^{\kappa}(f,\delta) \times \kappa(\textup{inv}(\delta^\prime,\delta)) \times C_3.
\end{align*}

The first identity follows by taking the product of (\ref{identity}) applied to the case when dim$V=a$ and $\Lambda=S_1(\delta_1)$ and the case when dim$V=b$ and $\Lambda=\emptyset$. The second identity follows from parabolic descent (lemma \ref{lemma descent}). The third identity follows by applying (\ref{identity}) to the case when dim$V=n$ and $\Lambda=S_1(\delta_1)$. 

We remark that $\kappa$-orbit integral here is different from the classical one since it contains conjugacy classes in both Hermitian spaces. We have shown that these conjugacy classes is a natural torus under $H^1(F,T_\delta)$ and $\kappa$ is character of $H^1(F,T_\delta)$. Therefore we can extend the usual $\kappa$ orbit integral to 

Let us explain that the third identity does give the $\kappa$-orbit integral defined using the endoscopic group $H$. This means we will need to verify the identity
\begin{equation} \label{D2}
\kappa(\textup{inv}(\delta^\prime, \delta_x))= \langle S_1(\delta_1), x \rangle.
\end{equation}

Here $(\delta_x, w_x)$ is the Jacquet-Rallis transfer of $((\gamma_1,\gamma_2), (v_1,v_2), (v_1^*,v_2^*)x)$ with $x \in F[\gamma]^\times$. By proposition \ref{get delta_x}, the canonical isomorphism between $F[\gamma]^\times/\textup{Nm}_{E[\gamma]/F[\gamma]}(E[\gamma]^\times)$ and $H^1(F,T_\gamma)$ maps $x$ to $\textup{inv}(\delta, \delta_x)$ (proposition \ref{get delta_x}) and maps $\kappa$ to $\langle S_1(\delta_1), \ \rangle$ (proposition \ref{definition of kappa}). This implies (\ref{D2}).

Finally, we compute the transfer factor. In the above computation, we have introduced three transfer factors: two come from the Jacquet-Rallis transfer and one comes from the parabolic descent. We need to prove their product gives the transfer factor $\chi(D(\delta))|D(\delta)|_F$. The transfer factor in the parabolic descent equals $|D(\delta)|_F$. So it remains to show the product of the two transfer factors in the Jacquet-Rallis transfer equals  $\chi(D(\delta))$.

To define the transfer factor in the Jacquet-Rallis case, we fix $\Omega_1 \in \wedge^{\textup{top}}V_a$, $\Omega_2 \in \wedge^{\textup{top}}V_b$, and take $\Omega \in \wedge^{\textup{top}}V$ defined by $\Omega_1 \wedge \Omega_2$. For careful readers, we should fix this transfer factor before defining the Jacquet-Rallis transfer of functions.  Recall we have defined $\omega(\gamma, v, v^*)$ in  (\ref{transfer}). The desired equality for transfer factors reduces to

$$\omega(\gamma, v, v^*)=\chi(D(\gamma))\omega(\gamma_1, v_1, v_1^*)\omega(\gamma_2, v_2, v_2^*),$$
where $\gamma=(\gamma_1, \gamma_2), v=(v_1, v_2)$ and $v^*=(v^*_1, v^*_2)$. To prove this, diagonalize $\gamma_1$ and $\gamma_2$ in $\overline{F}$ and compute both sides.

To verify $f^{a,b}$ is an endoscopic transfer of $f$, we still need to verify the requirement (\ref{more}). Namely, consider $(\delta_1,\delta_2)$ that is not a transfer from $\mathfrak{u}(W)$. This happens only when $S_1(\delta)=\emptyset$. In this case, $W^\prime=W_{n,0}$ and $W^\prime \ncong W$. The stable conjugacy class of the nice matching element $\delta^\prime$ of $(\delta_1,\delta_2)$ is the unique conjugacy class contained in $\mathfrak{u}(W_{n,0})$. Following the same computation as before, we find $$\textup{Orb}^{\textup{St}}(f^{a,b},(\delta_1,\delta_2))=0.$$

This finishes the proof of the existence part. To prove the Fourier transform part, notice that our construction commutes with Fourier transforms. For this commutativity, it is important that the $(-1)^{n-1}$ in theorem \ref{JR commute} and proposition \ref{E1} exactly cancels. We have shown that for any $f$, there exists $f^H$ such that $f$ matches $f^H$ and $\mathcal{F}(f)$ matches  $\mathcal{F}(f^H)$. A theorem of Waldspurger (Proposition A in \cite{Wa3}) ensures that for a fixed $f$, the validity of commutativity does not depend on the choice of $f^H$. This finishes our proof. 
\end{proof}

The same idea also applies to the fundamental lemma.

\begin{theorem}
For any odd prime $p$ and consider the fundamental lemmas on the Lie algebras. Then the fundamental lemma in the Jacquet-Rallis case $($conjecture \textup{\ref{Fundamental}}$)$ implies the fundamental lemma in the endoscopic case. 
\end{theorem}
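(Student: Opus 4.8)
The plan is to rerun the seven-step construction in the proof of Theorem \ref{Main Theorem}, but starting from the characteristic function of a hyperspecial maximal compact subalgebra, and to observe that under the hypothesis of conjecture \ref{Fundamental} every intermediate function is again such a characteristic function, so that the output is the hyperspecial characteristic function on $\mathfrak{h}$. Assume $E/F$ unramified and $p$ odd; then $W_{n,0}$ is split and $W_{n,0}\simeq W_{a,0}\oplus W_{b,0}$, so $G=\mathrm{U}(W_{n,0})$ and $H=\mathrm{U}(W_{a,0})\times\mathrm{U}(W_{b,0})$ are both unramified and we are in Case $1$ of Section \ref{LS review}. Take $f=1_{\mathfrak{u}(W_{n,0})(O_E)}$. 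Since Theorem \ref{Main Theorem} already shows that the function $f^{a,b}$ produced by the construction is an endoscopic transfer of $f$ with transfer factor $\chi(D(\delta))|D(\delta)|_F$ --- the very normalization in which the endoscopic fundamental lemma is stated --- it suffices to compute $f^{a,b}$ in this special case and check that it equals $1_{\mathfrak{u}(W_{a,0})(O_E)}\otimes 1_{\mathfrak{u}(W_{b,0})(O_E)}$, the hyperspecial characteristic function on $\mathfrak{h}$.

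I would go through the steps in order. Choose $F=1_{\mathfrak{u}(W_{n,0})(O_E)\times O_E^n}=1_{\mathfrak{k}_0}$, so $F(x,0)=f(x)$ and $\{F,0\}=\{1_{\mathfrak{k}_0},0\}$. Conjecture \ref{Fundamental}, together with the symmetry of the matching relation, lets us take $\phi=1_{\mathfrak{k}}=1_{M_n(O_F)\times O_F^n\times O_F^n}$ for the Jacquet-Rallis transfer. For the parabolic descent along $V_n=V_a\oplus V_b$: averaging $1_{\mathfrak{k}}$ over $\mathrm{GL}_n(O_F)$ is the identity, and integrating the block-unipotent translate over $\mathfrak{n}=\mathrm{Hom}(V_b,V_a)$ forces the two diagonal blocks to lie in $M_a(O_F)$ and $M_b(O_F)$ while the $\mathfrak{n}$-integral contributes the volume of $\mathrm{Hom}(V_b,V_a)(O_F)$, which is $1$ for the standard Haar measure; combined with $1_{O_F^n}=1_{O_F^a}\otimes 1_{O_F^b}$ this gives $\phi^{a,b}=1_{\mathfrak{k}^{(a)}}\otimes 1_{\mathfrak{k}^{(b)}}$, where the superscripts denote the analogous objects in dimensions $a$ and $b$. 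Applying conjecture \ref{Fundamental} in dimensions $a$ and $b$, and the fact that regular semisimple orbits on both sides of the product Jacquet-Rallis transfer factor as products with orbit integrals and transfer factors factoring accordingly, we may take $F^{a,b}_{0,0}=1_{\mathfrak{k}_0^{(a)}}\otimes 1_{\mathfrak{k}_0^{(b)}}$ and $F^{a,b}_{i,j}=0$ for $(i,j)\neq(0,0)$. Restricting to the Lie algebra, $f^{a,b}_{0,0}=1_{\mathfrak{u}(W_{a,0})(O_E)}\otimes 1_{\mathfrak{u}(W_{b,0})(O_E)}$ and $f^{a,b}_{i,j}=0$ otherwise. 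The Jacquet-Langlands step then transfers from $\mathfrak{u}(W_{a,0})\times\mathfrak{u}(W_{b,0})$ to itself, where the transfer factor is $1$ and one may take the transfer to be the identity, so $\widetilde{f^{a,b}_{0,0}}=f^{a,b}_{0,0}$ and the remaining three terms vanish. Hence $f^{a,b}=\widetilde{f^{a,b}_{0,0}}=1_{\mathfrak{u}(W_{a,0})(O_E)}\otimes 1_{\mathfrak{u}(W_{b,0})(O_E)}$; and condition (\ref{more}) is vacuous because $G$ is quasi-split. This is exactly the endoscopic fundamental lemma for $\mathrm{U}(W_{n,0})$.

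The only points not already contained in the proof of Theorem \ref{Main Theorem} are the two bookkeeping observations just used: that the parabolic descent of $1_{\mathfrak{k}}$ is $1_{\mathfrak{k}^{(a)}}\otimes 1_{\mathfrak{k}^{(b)}}$, and that the Jacquet-Langlands transfer acts trivially on these characteristic functions because $W_{a,0},W_{b,0}$ are already quasi-split. I expect the main (though entirely routine) obstacle to be the first: one must pin down mutually compatible Haar measures on $\mathrm{GL}_n(F)$, its $(a,b)$-Levi, the unipotent radical, and on $\mathrm{U}(W_{k,0})$ --- normalized so that the relevant integral lattices and hyperspecial compacts have volume $1$ --- so that all the volume factors introduced by averaging, by the $\mathfrak{n}$-integral, and by the orbit-integral normalizations are genuinely $1$. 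Once this is arranged, everything follows formally from conjecture \ref{Fundamental} and the transfer-factor identity already verified in the proof of Theorem \ref{Main Theorem}.
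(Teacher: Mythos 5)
Your proposal is correct and takes essentially the same approach as the paper: start with the hyperspecial characteristic function, run the seven-step construction from Theorem \ref{Main Theorem}, and observe that the Jacquet--Rallis fundamental lemma plus the explicit parabolic-descent computation force every intermediate output to be the expected hyperspecial characteristic function. The paper states this only as a one-sentence sketch, and your write-up is a faithful (and somewhat more careful) expansion of it, including the measure-normalization caveat the paper glosses over.
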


\begin{proof}
We leave it to the reader to give the formulation of the fundamental lemma in the unitary Lie algebras. To prove the above theorem, we start with the right function on $\mathfrak{u}(W_{n,0})$. Then the Jacquet-Rallis fundamental lemma and a simple calculation of parabolic descent implies that we get the right function on  $\mathfrak{u}(W_{a,0})\times  \mathfrak{u}(W_{b,0})$ following the process as in the proof of theorem \ref{Main Theorem}.
\end{proof}

\begin{remark}
In \cite{Kazhdan}, Kazhdan and Varshavsky have shown that the fundamental lemma in the endoscopic case is implied by theorem $\ref{LS commute}$ $($at least when $p$ is large$)$. In the unitary case, theorem $\ref{Main Theorem}$ give a direct proof of theorem $\ref{LS commute}$. Hence, we also give a genuinely different proof of the endoscopic fundamental lemma. 
\end{remark}


\begin{thebibliography}{99}

\bibitem{Wa1}{J.-L. Waldspurger, \textit{Une formula des traces locale pour les algebres de Lie p-adiques}, Math. Ann. 284(1989), 199-221.}

\bibitem{Wa2}{J.-L. Waldspurger, \textit{Le lemme fondamental implique le transfert}, Compositio Math. 105 (1997), 153-236.}

\bibitem{Wa3}{J.-L. Waldspurger, \textit{Transformation de Fourier et endoscopie}, Journal of Lie Theory 10.1 (2000): 195-206.}

\bibitem{Wei1}{Wei Zhang, \textit{Fourier transform and the global Gan-Gross-Prasad conjecture for unitary groups}, Ann. of Math. (2) 180 (2014), no. 3, 971-1049.}

\bibitem{Wei2}{Wei Zhang, \textit{On the smooth transfer conjecture of Jacquet-Rallis for n = 3}, Ramanujan J. 29 (2012), no. 1-3, 225-256.}

\bibitem{Wei3}{Wei Zhang, \textit{On arithmetic fundamental lemmas}, Invent. math. (2012) 188: 197. doi:10.1007/s00222-011-0348-1}

\bibitem{Rallis1}{S. Rallis, G. Schiffmann, \textit{Multiplicity one Conjectures}. ArXiv:0705.2168.}

\bibitem{Hales}{Thomas C. Hales, \textit{A Statement of the Fundamental Lemma}. ArXiv:math/0312227v2}

\bibitem{Kazhdan}{Kazhdan David, Varshavsky Yakov. \textit{On endoscopic transfer of Deligne Lusztig functions}. Duke Math. J. 161 (2012), no. 4, 675--732.}

\bibitem{Laumon}{G\'erard Laumon and Bao Ch\^au Ng\^o. \textit{Le Lemme Fondamental Pour Les Groupes Unitaires}.
Annals of Mathematics. Second Series, Vol. 168, No. 2 (Sep., 2008), pp. 477-573}

\bibitem{Yun}{Julia Gordon and Zhiwei Yun. \textit{The fundamental lemma of Jacquet and Rallis}. Duke Math. J. Volume 156, Number 2 (2011), 167-227.}

\bibitem{Serre}{J.-P. Serre, \textit{Local Fields}. Springer-Verlag, 1979 (GTM 67).}

\end{thebibliography}
\end{document}